\theoremstyle{plain}
\newtheorem{theorem}{Theorem}[section]
\newtheorem{remark}[theorem]{Remark}
\newtheorem{proposition}[theorem]{Proposition}
\newtheorem{lemma}[theorem]{Lemma}
\newtheorem{definition}[theorem]{Definition}
\newtheorem*{claim}{Claim}
\newtheorem*{lemma*}{Lemma}
\theoremstyle{definition}
\newcommand{\labeltext}[2]{%
  \@bsphack
  \csname phantomsection\endcsname 
  \def\@currentlabel{#1}{\label{#2}}%
  \@esphack
}
\newfont\bbf{msbm10 at 12pt}
\def\eps{\varepsilon}
\def\N{{\mathbb N}}
\def\O{{\mathcal O}}
\def\P{{\mathcal P}}
\def\cI{{\mathcal I}}
\def\hcI{\mathring{\cI}}
\def\cJ{{\mathcal J}}
\def\hcJ{\mathring{\cJ}}
\def\cD{{\mathcal D}}
\def\A{{\mathcal A}}
\def\cR{{\mathcal R}}
\def\es{{\emptyset}}
\def\sm{\setminus}
\def\1{\ensuremath{{\mathbbm{1}}}}
\def\orb{\mbox{\rm orb}}
\def\bd{\partial }
\def\le{\leqslant}
\def\ge{\geqslant}
\newcommand{\hDelta}{\mathring{\Delta}}
\newcommand{\Lp}{\mathcal{L}}
\newcommand{\cS}{\mathcal{S}}
\newcommand{\ve}{\varepsilon}
\newcommand{\hLp}{\mathring{\Lp}}
\newcommand{\hF}{\mathring{F}}
\newcommand{\hf}{\mathring{f}}
\newcommand{\hY}{\mathring{Y}}
\newcommand{\he}{\mathring{e}}
\newcommand{\hg}{\mathring{g}}
\newcommand{\holm}{\mathring{m}}
\newcommand{\beq}{\begin{equation}}
\newcommand{\eeq}{\end{equation}}
\newcommand{\be}{\bm{\varepsilon}}
\newcommand{\real}{\text{Re}}
\newcommand{\imag}{\text{Im}}
\DeclareMathOperator*{\essinf}{ess\,inf}
\newcommand{\vertiii}[1]{{\left\vert\kern-0.25ex\left\vert\kern-0.25ex\left\vert #1 
    \right\vert\kern-0.25ex\right\vert\kern-0.25ex\right\vert}}
\newcommand{\invertiii}[1]{{\vert\kern-0.25ex\vert\kern-0.25ex\vert #1 
    \vert\kern-0.25ex\vert\kern-0.25ex\vert}}
\numberwithin{equation}{section}
\begin{document}

\title[Trichotomy for hitting times and escape rates]{A trichotomy for hitting times and escape rates for a class of unimodal maps}
\date{\today}

\begin{abstract}
We consider local escape rates and hitting time statistics for unimodal interval maps of Misiurewicz-Thurston type.  We prove that for any point $z$ in the interval there is a local escape rate and hitting time statistics which is one of three types.  While it is key that we cover all points $z$, the particular interest here is when $z$ is periodic and in the postcritical orbit which yields the third part of the trichotomy.  We also prove generalised asymptotic escape rates of the form first shown by Bruin, Demers and Todd.
\end{abstract}

\subjclass[2020]{37C30, 37D25, 37E05}

\thanks{Part of this work was completed during visits of MT to Fairfield University in 2022 and 2023.  MD was partially supported by NSF grant DMS 2055070.  MT was partially supported by the FCT (Funda\c c\~ao para a Ci\^encia  e a Tecnologia) project 2022.07167.PTDC}

\author[M.F.~Demers]{Mark F. Demers}
\address{Department of Mathematics\\
Fairfield University\\
1073 N. Benson Road\\
Fairfield, CT  06824\\
USA} 
\email{mdemers@fairfield.edu}
\urladdr{http://faculty.fairfield.edu/mdemers }

\author[M.~Todd]{Mike Todd}
\address{Mathematical Institute\\
University of St Andrews\\
North Haugh\\
St Andrews\\
KY16 9SS\\
Scotland} 
\email{m.todd@st-andrews.ac.uk}
\urladdr{https://mtoddm.github.io  }

\maketitle

\section{Introduction and main result}
\label{sec:intro}
 
Much attention has been paid recently to the connection between hitting times on the one hand \cite{FreFreTod15, giulietti, atnip} and
escape rates on the other \cite{KL zero, PolUrb18, DemTod21}.  Given a map $f$ of the interval preserving a probability measure $\mu$, 
the principal connection is that in many situations the extremal index, defined by
\[
-\lim_{\ve \to 0} \log \mu( \{ x : f^j(x) \notin B_\ve(z), j=0, 1, \ldots, \lfloor \mu(B_\ve(z))^{-1} \rfloor \} ) ,
\]
where $B_\ve(z)$ denotes the ball of radius $\ve$ centred at $z$,
yields the same limit \cite{BruDemTod18} as the local (or asymptotic) escape rate given by,
\[
\lim_{\ve \to \infty} \lim_{n \to \infty}  \frac{-1}{\mu(B_\ve(z))} \frac{1}{n} \log \mu( \{ x : f^j(x) \notin B_\ve(z), j = 0, 1, \ldots n \} ) \, .
\]
The limit typically obeys a dichotomy:  if $z$ is not periodic, the limit is 1; if $z$ is periodic with prime period $p$, then
the limit is $1 - \frac{1}{|Df^p(z)|}$ if $\mu$ is an acip.  More generally, when $\mu$ is an equilibrium state for a H\"older continuous potential $\phi$, the limit in the periodic case is $1 - e^{S_p\phi(z)}$, where 
$S_p\phi(z) = \sum_{i=0}^{p-1} \phi(f^i(z))$. 

Such dichotomies for asymptotic escape rates have been proved for all $z$ in some uniformly hyperbolic contexts
 \cite{BuYu11,  KL zero, FerPol12, HayYan20}.  
 In the non-uniformly hyperbolic setting, the asymptotic escape rate has been achieved for large sets of $z$ in e.g. \cite{DemTod17, PolUrb18, BruDemTod18, DemTod21}. 
 
In this paper we expand this dichotomy to a trichotomy by establishing conditions in which a third limit
is possible.  We do this principally by considering unimodal maps whose post-critical orbit is eventually periodic and centring holes at such periodic points $z$.  We show that in this case, the asymptotic escape rate is a function of 
both $|Df^p(z)|$ as well as the order of the spike in the invariant density at $z$.

Before stating our main result, we precisely define the class of maps to which our results will apply.

We consider S-unimodal maps $f:[0, 1]\to [0, 1]$ in the sense of \cite{NowSan98}.  That is
\begin{itemize}
\item $f$ is $C^2$ with one critical point $c$;
\item in a neighbourhood of $c$, $f(x)= f(c)- A(x-c)^\ell$ for $\ell>1$ and $A>0$;
\item $|Df|^{-\frac12}$ is convex on each of $[0, c]$ and $[c, 1]$.  
\end{itemize}
\begin{figure}

\includegraphics[scale=0.4]{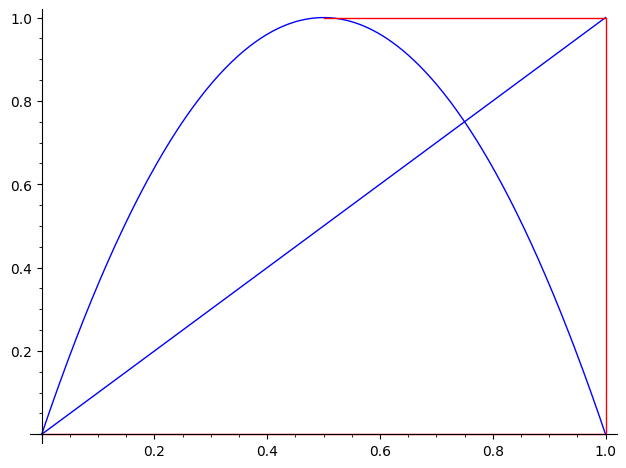}\hspace{2cm}\includegraphics[scale=0.4]{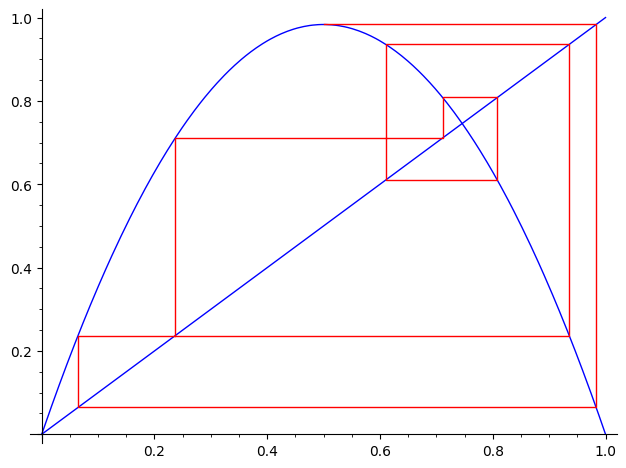}
\caption{Examples of our class of maps in the quadratic family $x\mapsto \frac A4-A(x-1/2)^2$: on the left $A=4$ giving the full quadratic map $x\mapsto 1-4(x-1/2)^2$ where $k_0=2$ and $p=1$; and on the right $A\approx 3.93344...$ where $k_0=3$ and $p=5$.}
\label{fig:egs}
\end{figure}

We assume that the map is Misiurewicz-Thurston: there is a minimal $k_0\ge 2$ such that $f^{k_0}(c)$ is periodic, i.e., there is a minimal $p \ge 1$ such that  $f^p(f^{k_0}(c)) =f^{k_0}(c)$ (see Figure~\ref{fig:egs} for some simple examples).  Moreover this is a repelling periodic point: $|Df^p(f^{k_0}(c))|>1$.  This implies that the postcritical orbit of $c$,  $\orb(f(c))=\{f(c), f^2(c), \ldots, f^{k_0+p-1}(c)\}$, is finite and that there is an absolutely continuous $f$-invariant probability measure (acip) $\mu$.  Then the density of $\mu$ has a spike at each point of $\orb(f(c))$ of type $x^{\frac1\ell -1}$.
 We let $I:=[f^2(c), f(c)]$ denote the \emph{dynamical core}: all points except 0 and 1 eventually map into $I$ and remain there, so $\mu$ is supported in $I$.

Define a hole centred at $z$ by $H_\ve(z) = (z-\ve, z+\ve)$.  The {\em escape rate} of the open system
with hole $H_\ve(z)$ with respect to the measure $\mu$ is defined by
\begin{equation}
\label{eq:escape f}
\mathfrak{e}(H_\ve(z)) = - \lim_{n\to \infty}\frac{1}{n} \log\mu\left(\left\{x\in I: f^j(x)\notin H_\ve(z), \ j=0, \ldots, n-1\right\}\right) ,
\end{equation}
when the limit exists.
Define the \emph{local (asymptotic) escape rate at $z$} by 
\[
\mbox{esc}(z):=\lim_{\eps\to 0} \frac{\mathfrak{e}(H_\ve(z)))}{\mu(H_\eps(z))} \, .
\]
If $z$ is a periodic point with prime period $p$, we write $\lambda_z:= |Df^p(z)|$.

Our main result is the following trichotomy regarding possible values of the local escape rate. 
The only model where such a result has appeared previously is in \cite[Remark 3.11]{DemTod21}, 
which is a very special case (the full quadratic map).

\begin{theorem}
\label{thm:spike}
Let $f$ be as defined above. For any $z\in I$,
\begin{equation*}
{\rm esc}(z) = \begin{cases} 1 & \text{ if $z$ is not periodic},  \\
1-\frac1{\lambda_z} & \text{ if $z$ is periodic and not in orb}(f(c)),\\
1-\frac1{\lambda_z^{1/\ell}} & \text{ if $z$ is periodic and in orb}(f(c)).\\
\end{cases}
\end{equation*}
\end{theorem}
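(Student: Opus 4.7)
The plan is to express the escape rate in terms of the leading eigenvalue deficit of a perforated transfer operator on a suitable induced system, apply a Keller--Liverani spectral perturbation argument, and then compute the resulting extremal-index-type quantity $\theta_z$ directly in each of the three cases. Since $f$ is Misiurewicz--Thurston, $\orb(f(c))$ is a finite hyperbolic set, so I would build a uniformly expanding Markov induced map $F: Y \to Y$ over a reference set $Y \subset I$ bounded away from $c$ (via a first-return construction or a Young tower), whose transfer operator $\Lp_F$ enjoys a spectral gap on a BV-type Banach space. Lifting the hole $H_\ve(z)$ to the tower yields a perforated operator $\hLp_{F,\ve}$, and the Keller--Liverani theorem gives a simple leading eigenvalue $\lambda_\ve$ satisfying
\[
1 - \lambda_\ve = (1 - \theta_z)\,\bmu(\widehat H_\ve(z)) + o(\bmu(\widehat H_\ve(z))),
\]
where $\bmu$ is the $F$-invariant measure. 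The induced-to-original Abramov-type identity used in \cite{BruDemTod18, DemTod21} then gives $\e(H_\ve(z))/\mu(H_\ve(z)) \to 1 - \theta_z$, reducing the theorem to computing
\[
\theta_z \;=\; \lim_{\ve \to 0}\,\frac{\mu(H_\ve(z) \cap f^{-p}(H_\ve(z)))}{\mu(H_\ve(z))}
\]
when $z$ is periodic of prime period $p$, and $\theta_z = 0$ otherwise.

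If $z$ is not periodic, no iterate of $z$ returns to $z$, so standard estimates give $\theta_z = 0$ and ${\rm esc}(z) = 1$. If $z$ is periodic of prime period $p$ and $z \notin \orb(f(c))$, then the invariant density $h$ is continuous and bounded near $z$, so $\mu(H_\ve(z)) \sim 2\ve h(z)$; the local branch of $f^{-p}$ fixing $z$ sends $H_\ve(z)$ diffeomorphically to an interval of length $\sim 2\ve/\lambda_z$ on which $h$ is well-approximated by $h(z)$, giving $\theta_z = 1/\lambda_z$ and recovering the classical dichotomy.

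The novel case is $z \in \orb(f(c))$. Pulling the spike at $f(c)$ forward along the finite postcritical orbit shows that $h(x) \asymp |x - z|^{1/\ell - 1}$ near $z$ (one-sided, from the side determined by the orientation of the relevant finite composition of $f$ starting near $c$), so $\mu(H_\ve(z)) \asymp \ve^{1/\ell}$. The local branch of $f^{-p}$ fixing $z$ is a smooth contraction with derivative $1/\lambda_z$ at $z$, so its image of $H_\ve(z)$ agrees with $H_{\ve/\lambda_z}(z)$ up to an $O(\ve^2)$ error. Using the spike asymptotic on both scales,
\[
\theta_z \;=\; \lim_{\ve \to 0}\,\frac{\mu(H_{\ve/\lambda_z}(z))}{\mu(H_\ve(z))} \;=\; \lambda_z^{-1/\ell},
\]
which is the third item of the trichotomy. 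Other preimages of $z$ under $f^p$ land at points where $h$ is locally bounded, so the corresponding contributions to $\mu(H_\ve(z) \cap f^{-p}(H_\ve(z)))$ are $O(\ve) = o(\ve^{1/\ell})$ and do not affect the leading coefficient.

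The main obstacle is executing the Keller--Liverani step cleanly in case (3): the hole contains a genuine singularity of the density, so the perturbation estimate $\|\Lp_F - \hLp_{F,\ve}\|_{\mathrm{weak}\leftarrow\mathrm{strong}} = O(\bmu(\widehat H_\ve(z)))$ must be re-established on a Banach space adapted to the $|x-z|^{1/\ell-1}$ blow-up, with careful distortion estimates for the induced scheme in neighbourhoods of the postcritical orbit. A related technical point is the lift of the (two-sided) hole to the tower when the spike is one-sided: one must verify that the leading coefficients on the two sides of $z$ balance correctly under the local branch of $f^{-p}$, and that boundary effects between tower levels contribute only at subleading order relative to $\ve^{1/\ell}$.
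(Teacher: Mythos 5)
Your final answer and the heuristic computation of the limiting ratio are correct, and the broad philosophy (induce past the critical point, spectral-perturbation theory, spike asymptotics) is the right one. But the route you sketch places the hole \emph{inside} the inducing domain $Y$, so the density singularity $|x-z|^{1/\ell-1}$ lives inside the functional framework; you then need (and candidly flag) a weighted BV space adapted to the blow-up and a re-derivation of the Keller--Liverani extremal-index formula in that space. That is not a cosmetic obstacle: the induced invariant density on your $Y$ would fail to be in $BV(Y)$, the perturbation bound $|||\Lp_F-\hLp_{F,\ve}|||\le C\,m(H'_\ve)$ and the lower bound $\inf_Y g_0>0$ (which the rest of the argument leans on) would both fail, and on top of that $z\in\orb(f(c))$ is a boundary point of any finite Markov partition built from the postcritical orbit, so $z$ cannot be interior to a cylinder of the inducing scheme. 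You would also have to handle generic holes that are not unions of cylinders; the paper needs a nontrivial approximation argument ($\beta$-allowable holes, Section~\ref{sec:beta approx}) for exactly this, which your outline does not touch.

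The paper sidesteps the singularity entirely by choosing $Y$ so that $\orb(f(c))$, and in particular $z$, is \emph{excluded} from $Y$ (Lemma~\ref{lem:Y}). The hole $H_{\be}(z)$ then sits outside $Y$, the induced hole $H'_{\be}$ is the union of first-return cylinders that pass through $H_{\be}(z)$ before returning, and the density $g_0$ on $Y$ is ordinary $BV$, bounded above and away from $0$ (equation~\eqref{eq:lower g}); the functional analysis is then standard $BV$ with Keller--Liverani stability, no weighted space needed. The spike at $z$ reappears only in the purely geometric/combinatorial ratio $\mu(H'_{\be})/\mu(H_{\be})$, computed via the chain structure of cylinders accumulating on $z$ together with the rescaling Lemma~\ref{lem:rescale}, which is where $\lambda_z^{-1/\ell}$ comes from. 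The escape rate is then assembled from the three-factor identity \eqref{eq:ratio split}, with the spectral factor $-\log\Lambda_{\be}/\mu_Y(H'_{\be})\to 1$ (Lemma~\ref{lem:induced limit}), not $1-\theta_z$: since $z\notin Y$, the induced hole is effectively aperiodic, and the paper deliberately argues this directly rather than invoking the KL extremal-index formula whose aperiodicity hypothesis is awkward to check here. So the two proofs agree on the answer but differ structurally in where the spike is allowed to live; the paper's choice is precisely what makes the spectral step tractable without inventing new function spaces.
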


The cases when $z$ is outside the postcritical orbit orb$(f(c))$ follow from \cite[Theorem~3.7]{DemTod21}, so our focus here is on the case when $z\in \mbox{orb}(f(c))$: when 
$z\in \mbox{orb}(f(c))$ is preperiodic (which falls into the first case in the above theorem), and when $z\in \mbox{orb}(f(c))$ is periodic.  The techniques to show the former are essentially a subset of the latter.  
Our proofs exploit the fact that the finite post-critical orbit allows us to define a finite Markov partition
for the map (albeit with unbounded distortion) which we then use to define a first return map to a conveniently
chosen set. 


\subsection{Hitting time statistics}

Setting $r_A(x):=\inf\{k\ge 1:f^k(x)\in A\}$, the \emph{first hitting time to $A$}, the \emph{Hitting Time Statistics (HTS)} at $z$ is given by 
$$T_z(t):=\lim_{\eps\to 0} \mu\left(r_{H_\eps(z)} \ge \frac{t}{\mu(H_\eps(z))}\right)$$
for $t>0$, provided this limit exists.  
A generalisation of this limit is formulated in \cite{BruDemTod18}, where the link between HTS and asymptotic escape rates was explored by scaling the hitting time asymptotic as
$t \mu(H_\ve(z))^{-\alpha}$ for some $\alpha >0$.  
Accordingly, define for $t, \alpha >0$,
\[
L_{\alpha,t}(z) := \lim_{\ve \to 0} \frac{-1}{t \mu(H_\ve(z))^{1-\alpha}} \log \mu \left( r_{H_\ve(z)} \ge \frac{t}{ \mu(H_\ve(z))^{\alpha}} \right) \, .
\]
Note that $T_z(t)$ corresponds to $\alpha =1$.
Using Theorem~\ref{thm:spike} we prove the following in Section~\ref{sec:HTS}.

\begin{theorem}
\label{thm:HTS}
For any $z\in I$ and all $\alpha, t>0$, $L_{\alpha, t}(z)$ exists and equals 
\begin{equation*}
L_{\alpha, t}(z) = \begin{cases} 1 & \text{ if $z$ is not periodic},\\
1-\frac1{\lambda_z} & \text{ if $z$ is periodic and not in }orb(f(c)),\\
1-\frac1{\lambda_z^{\frac1\ell}} & \text{ if $z$ is periodic and in }orb(f(c)).\\
\end{cases}
\end{equation*}
\end{theorem}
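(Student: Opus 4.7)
The plan is to deduce Theorem~\ref{thm:HTS} from Theorem~\ref{thm:spike} by matching the scaling in the definition of $L_{\alpha,t}(z)$ with the escape rate. Setting $n(\eps) := \lceil t/\mu(H_\eps(z))^\alpha \rceil$, so that $n(\eps)\to\infty$ as $\eps\to 0$, the algebraic identity
\[
\frac{n(\eps)\,\e(H_\eps(z))}{t\,\mu(H_\eps(z))^{1-\alpha}} \;=\; \frac{\e(H_\eps(z))}{\mu(H_\eps(z))}
\]
shows that, provided $-\log \mu(r_{H_\eps(z)} \ge n(\eps)) \sim n(\eps)\,\e(H_\eps(z))$ as $\eps\to 0$, the limit $L_{\alpha,t}(z)$ equals $\lim_{\eps\to 0}\e(H_\eps(z))/\mu(H_\eps(z))$, which is $\mbox{esc}(z)$ by Theorem~\ref{thm:spike}.

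The technical work is to establish the uniform survivor asymptotic
\[
-\log \mu\bigl(\{x : f^j(x)\notin H_\eps(z),\ j=0,\dots,n-1\}\bigr) \;=\; n\,\e(H_\eps(z))\bigl(1 + o(1)\bigr)
\]
along the sequence $n=n(\eps)$. Two regimes arise. When $\alpha<1$, so $n(\eps)\,\e(H_\eps(z))\to 0$, a Taylor expansion $-\log(1-x)=x+O(x^2)$ reduces the task to showing per-step loss of mass at rate $\e(H_\eps(z))(1+o(1))$, which follows from differentiability at $\eps=0$ of the leading eigenvalue $\lambda_\eps$ of the punctured transfer operator $\Lp_\eps$. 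When $\alpha\ge 1$, so $n(\eps)\,\e(H_\eps(z))$ stays bounded (for $\alpha=1$) or diverges (for $\alpha>1$), the relevant asymptotic is the exponential-decay expansion $\mu(S_n^\eps) = C_\eps\,e^{-n\,\e_\eps}(1+o_n(1))$ coming from a uniform-in-$\eps$ spectral gap for $\Lp_\eps$; the $O(1)$ correction $\log C_\eps$ is either dominated by the exploding leading term (for $\alpha>1$) or neutralised through continuity of the spectral data as $\eps\to 0$ (for $\alpha=1$). The small discrepancy $\mu(r_{H_\eps(z)}\ge n) - \mu(S_n^\eps) = O(\mu(H_\eps(z)))$ is negligible after normalisation.

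The main obstacle is the uniform spectral gap and the associated eigenvalue asymptotics for $\Lp_\eps$ when $z\in\orb(f(c))$ is periodic, since the invariant density carries an integrable $|x-z|^{1/\ell-1}$ spike precisely at the hole. Here $\Lp_\eps$ must be analysed on a function space adapted to the singularity, and the spectral gap must persist uniformly as $\eps\to 0$. This is precisely the technical heart of Theorem~\ref{thm:spike}, built on the finite Markov partition and induced-map scheme sketched in the introduction; once its proof furnishes the requisite spectral data for $\Lp_\eps$, Theorem~\ref{thm:HTS} follows from the manipulation above with essentially no further dynamical input.
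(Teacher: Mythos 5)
Your reduction of $L_{\alpha,t}(z)$ to $\mathrm{esc}(z)$ via a two-term survivor-set asymptotic $\mu\bigl(\{x : f^j(x)\notin H_\ve(z),\ j<n\}\bigr) = C_\ve\, e^{-n\,\e(H_\ve(z))}(1+o_n(1))$ matches the paper's strategy in outline; the regime split $\alpha<1$ versus $\alpha\ge 1$ is unnecessary, since the paper treats all $\alpha$ uniformly by fixing $\gamma$ so that $\alpha>\gamma/\zeta$ in \eqref{eq:gamma two}, which is what controls the correction term after normalisation.

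The gap is in the final claim that once the proof of Theorem~\ref{thm:spike} ``furnishes the requisite spectral data for $\Lp_\ve$,'' Theorem~\ref{thm:HTS} follows with ``essentially no further dynamical input.'' It does not furnish that data. The spectral gap obtained in Theorem~\ref{thm:gap} is for the punctured transfer operator $\hLp_{\be}$ of the \emph{induced first-return map} $F_{\be}$ acting on $BV(Y)$, where $Y$ is deliberately chosen to avoid $z$ and the density spike. The escape rate is recovered only through the Abramov-type identity $-\log\Lambda_{\be}=\bigl(\int\tau\,d\nu_{\be}\bigr)\,\e(H_{\be}(z))$ of Lemma~\ref{lem:nu}, which identifies the leading exponential rate but gives no control on the prefactor $C_\ve$ for the original-map survivor sets: one step of $f$ corresponds to a variable number of steps of $F_{\be}$, so the spectral decomposition on $BV(Y)$ does not transfer to a two-term asymptotic for $\mu$ on $I$. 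The paper therefore must carry out an entirely new spectral analysis before the manipulation you sketch can run: it builds a non-Markovian tower $(\Delta,f_\Delta)$ over $Y$, measure-isomorphic to $(I,f,\mu)$, and proves a fresh uniform spectral gap for $\hLp_{\Delta,\be}$ on the weighted space $WV(\Delta)$ (Proposition~\ref{prop:tower gap}, built from Lemmas~\ref{lem:compact}--\ref{lem:Delta pert}, including a peripheral-spectrum argument to exclude nontrivial eigenvalues of modulus one), together with the identification $-\log\lambda_{\Delta,\be}=\e(H_{\be}(z))$ (Lemma~\ref{lem:spec escape}). This tower construction, plus the $\beta$-allowable approximation at the HTS level (Section~\ref{sec:beta approx two}) and the separate adaptation of \cite{BruDemTod18} for $z\notin\orb(f(c))$ (Section~\ref{ssec:veryend}), is the bulk of Section~\ref{sec:HTS} and cannot be bypassed.
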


As with Theorem~\ref{thm:spike}, the main focus here is on the case when $z \in \mbox{orb}(f(c))$; the other cases follow from adaptations of \cite{BruDemTod18}, as we describe in Section~\ref{ssec:veryend}.

We note that a full description of $T_z(t)$ in uniformly hyperbolic cases can be seen in \cite{FreFreTod12, AytFreVai15}.  In the non-uniformly hyperbolic case, the only full dichotomy (i.e.~the result applies to all $z$ with no
excluded values), and only for $T_z(t)$ rather than $L_{\alpha, t}(z)$ as above,  has been demonstrated for Manneville-Pomeau maps in \cite{FreFreTodVai16}; see also the recent preprint \cite{BanFre23}.  

\begin{remark}
Our techniques actually deal with holes of the form $(z-\eps_L, z+\eps_R)$ where $\frac{\eps_L}{\eps_R}$ is uniformly bounded away from 0 and $\infty$, so we are able to handle non-symmetric holes in both
Theorems~\ref{thm:spike} and \ref{thm:HTS}.  
\end{remark}

\subsection{Structure of the paper}

In Section~\ref{sec:returnstructure} we assume that we are in the $z\in \orb(f(c))$ periodic case and define a domain $Y$ and a first return map $F$ to $Y$ so that the hole at $z$ is outside $Y$:  $(Y, F)$ has exponential return times and a Markov structure.  We outline how domains of $F$ can map into the hole and then modify $F$ to $F_{\be}$ 
by introducing extra cuts at the boundary of the hole. In Section~\ref{sec:holemeas} again we initially assume  $z\in \orb(f(c))$ is periodic and show how the density spike at $z$ affects the scaling of the hole and its preimages.  These scalings lead to the three quantities seen in Theorems~\ref{thm:spike} and \ref{thm:HTS}.   The section ends in Section~\ref{ssec:ratiopreper} where the case $z\in \orb(f(c))$ is preperiodic is dealt with.

Section~\ref{sec:functional} sets up the functional framework for $F_{\be}$ and its punctured counterpart $\hF_{\be}$.  In particular a spectral gap is proved along with the relevant perturbation theory for the transfer operator 
corresponding to $\hF_{\be}$ for $\be$ small and for certain nice `$\beta$-allowable' holes.  The convergence of the relevant spectral properties is proved, which prove a version of Theorem~\ref{thm:spike}, see \eqref{eq:ratio split} and \eqref{eq:limit split} for these specific holes.  Section~\ref{sec:beta approx} then proves Theorem~\ref{thm:spike} for general holes and in all cases of $z\in \orb(f(c))$.  In Section~\ref{sec:HTS} we prove Theorem~\ref{thm:HTS}.  The strategy is to use the induced map to construct a non-Markovian Young Tower and prove a spectral gap for the associated transfer operator in a space of weighted bounded variation.

Notation: we use $g_1(\eps)\gtrsim g_2(\eps)$ to mean $\frac{g_1(\eps)}{g_2(\eps)} \ge h(\eps)$ where $h(\eps)\to 1$ as $\eps\to 0$.  Similarly $g_1(\eps)\lesssim g_2(\eps)$ means $\frac{g_1(\eps)}{g_2(\eps)} \le h(\eps)$ where $h(\eps)\to 1$ as $\eps\to 0$ and $g_1(\eps) \sim g_2(\eps)$ if $\frac{g_1(\eps)}{g_2(\eps)} \to 1$ as $\eps\to 0$.


\section{First return map structure}
\label{sec:returnstructure}

Recall that $k_0 \ge 2$ denotes the minimal integer such that $f^{k_0}(c)$ is periodic with prime period $p$.  
From here until the end of Section~\ref{ssec:ratioper} we will focus on the case that $z\in orb(f(c))$ is periodic; for the preperiodic case we make minor adjustments in Section~\ref{ssec:ratiopreper}.  Suppose that for $k_1\ge k_0$, $f^{k_1}(c)  =z$.  Then necessarily, $f^p(z)=z$ with prime period $p$.

\begin{proposition}
\begin{enumerate}
\item 
 There exists $\lambda_{per}>1$ such that if $x$ is periodic of  period $n$ then $|Df^n(x)| \ge \lambda_{per}^n$.
 \item For each neighbourhood $U$ of $c$ there are $K_U>0$ and $\lambda_U>1$ such that if $J$ is an interval with $U\cap \left(\cup_{i=0}^{n-1} f^i(J)\right) = \es$ then $ \inf_{x \in J} |Df^n(x)|  \ge K_U\lambda_U^n$. 
 \item There is a unique acip $\mu$; this is supported on a finite union of intervals, which contain $c$ in the interior.
 \end{enumerate}
\label{prop:Str}
\end{proposition}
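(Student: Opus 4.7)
The plan is to invoke the classical machinery for S-unimodal Misiurewicz--Thurston maps: Mañé-type hyperbolicity for parts (1)--(2) and the Misiurewicz acip construction together with the structure theorem for S-unimodal attractors for part (3). The preliminary observation is that $f$ can have no periodic attractor: being S-unimodal, any periodic attractor would necessarily absorb the critical orbit, but by hypothesis $f^{k_0}(c)$ lies on a repelling cycle. Moreover $\orb(f(c))$ is finite and disjoint from $c$, so $d_0 := \dist(c, \orb(f(c))) > 0$.

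With these two facts, part (2) follows directly from Mañé's hyperbolicity theorem for S-unimodal maps with a non-flat critical point and no attracting periodic orbit: every neighbourhood $U$ of $c$ admits $K_U > 0$ and $\lambda_U > 1$ with the claimed property. For part (1), my plan is to bootstrap from (2) via a return-time argument. Fix $U_0 \subset B_{d_0/2}(c)$ small enough that (2) applies. A periodic orbit of period $n$ either avoids $U_0$ entirely, in which case (2) yields $|Df^n(x)| \ge K_{U_0}\lambda_{U_0}^n$, or visits $U_0$ a finite number of times. For each visit $x \in U_0$, the non-flat form $f(x) = f(c) - A(x-c)^\ell$ makes $|Df(x)|$ small of order $|x-c|^{\ell-1}$, but the first-return time $r(x)$ back into $U_0$ is at least of logarithmic order $\log(1/|x-c|)$: this is because $f(x)$ is close to the finite set $\orb(f(c))$, which is $d_0$-separated from $c$, and the outside-$U_0$ expansion from (2) forces the orbit to drift exponentially before it can re-enter $U_0$. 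Balancing $|x-c|^{\ell-1}$ against $\lambda_{U_0}^{r(x)}$ gives, per visit-excursion block, a uniform lower expansion rate strictly above $1$, and multiplying across visits produces $\lambda_{per} > 1$. The main obstacle is precisely this uniformity: periodic orbits in Misiurewicz--Thurston systems can accumulate at $c$ (for the Chebyshev map they are dense), so the bound is not a compactness statement and genuinely requires the critical-order balance above.

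For part (3), existence of an acip $\mu$ is the classical theorem of Misiurewicz specialised to our setting; uniqueness follows from ergodicity, which in turn follows from the positive Lyapunov exponent (a consequence of (1) along $\mu$-a.e.\ orbit) together with absolute continuity. The support of $\mu$ coincides with the metric attractor of $f$ and, by the structure theorem for S-unimodal maps with no attracting cycle, is a finite union of closed intervals cyclically permuted by $f$, whose union covers the dynamical core $I = [f^2(c), f(c)]$. Since $f^2(c) < c < f(c)$, the critical point $c$ lies in the interior of $I$, and because $f$ is locally two-to-one from both sides of $c$ onto a neighbourhood of $f(c) \in \supp\mu$, $c$ lies in the interior of $\supp\mu$.
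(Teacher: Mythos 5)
The paper does not prove this proposition: all three items are given by citation (Nowicki--Sands Theorem~A for (1), Ma\~n\'e's Lemma for (2), and de~Melo--van~Strien Theorem~V.1.3 for (3)). Your treatment of (2) and (3) matches the paper's --- you cite Ma\~n\'e and the Misiurewicz/S-unimodal structure theory --- so I will focus on (1), where you attempt a genuinely different route: bootstrapping from (2) via a return-time-versus-contraction balance.

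That balance, as stated, has a gap. At a critical pass $x \in U_0$ you lose a factor $|Df(x)| \asymp |x-c|^{\ell-1}$, and you propose to recover it using (2) alone: $|Df^{r-1}(f(x))| \ge K_{U_0}\lambda_{U_0}^{r-1}$ with $r = r(x) \gtrsim \log(1/|x-c|)$. But the constant in the return-time bound matters. The orbit of $f(x)$ shadows $\orb(f(c))$, starting at distance $A|x-c|^\ell$ from it, and the separation grows \emph{at most} like $|Df|_\infty^{\,j}$; it must reach size $\approx d_0$ before the orbit can re-enter $U_0$. This gives $r \gtrsim 1 + \frac{\ell\log(1/|x-c|)}{\log|Df|_\infty}$, with $\log|Df|_\infty$ --- not anything related to $\lambda_{U_0}$ --- in the denominator. (Note the mechanism is the Lipschitz \emph{upper} bound on the derivative, not the expansion from (2); your phrasing that the ``outside-$U_0$ expansion from (2) forces the orbit to drift exponentially'' conflates the upper and lower bounds on the drift.) Feeding this into your balance yields
\[
|Df^r(x)| \;\gtrsim\; |x-c|^{\ell-1}\,\lambda_{U_0}^{\,r-1} \;\gtrsim\; |x-c|^{\,\ell-1-\ell\log\lambda_{U_0}/\log|Df|_\infty},
\]
which is bounded below (let alone $\gtrsim \lambda^r$ for some $\lambda>1$) only if $\log\lambda_{U_0}/\log|Df|_\infty > 1 - 1/\ell$. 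Ma\~n\'e's Lemma gives \emph{some} $\lambda_U>1$ but nothing close to $|Df|_\infty^{1-1/\ell}$ in general, so the argument does not close.

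The missing ingredient is the binding-period estimate. During the shadowing of $\orb(f(c))$, bounded distortion along the postcritical orbit (Koebe, using that $\orb(f(c))$ stays a definite distance from $c$) gives, by the mean value theorem applied to $f^q$ on the interval $[f(c), f(x)]$,
\[
|Df^q(f(x))| \;\asymp\; |Df^q(f(c))| \;\asymp\; \frac{d_0}{|f(x)-f(c)|} \;\asymp\; \frac{1}{|x-c|^\ell},
\]
where $q$ is the binding time. This exactly cancels the $\ell$-fold critical contraction: $|Df^{q+1}(x)| \asymp |x-c|^{\ell-1}\cdot|x-c|^{-\ell} = |x-c|^{-1}$. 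Since $q \asymp \ell\log(1/|x-c|)/\log\Lambda$ with $\Lambda$ the exponential rate along $\orb(f(c))$ (ultimately $|Df^p(f^{k_0}(c))|^{1/p}$), one obtains $|Df^{q+1}(x)|^{1/(q+1)} \gtrsim \Lambda^{1/\ell}$, uniformly. Combining this per-block estimate with (2) on the stretch from the end of the binding period to the next entry into $U_0$ gives $\lambda_{per} \le \min\{\Lambda^{1/\ell}, \lambda_{U_0}\}$, and the cyclic-product bookkeeping across returns along the closed orbit can then be closed as you intended (with the multiplicative constant absorbed by shrinking $U_0$ so returns are long). Alternatively --- and in keeping with the paper --- you could simply invoke Nowicki--Sands \cite[Theorem~A]{NowSan98} (or Misiurewicz's original hyperbolicity result) for (1) directly, since (1) is a classical consequence of the Misiurewicz condition and is not re-proved in this paper.
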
 

The first item here is in, for example  \cite[Theorem A]{NowSan98}, while the second is more generally known as Ma\~n\'e's Lemma (eg \cite[Theorem III.5.1]{MelStr93}).  The third item can be found for example as part of \cite[Theorem V.1.3]{MelStr93}.

Recall that we call a hyperbolic periodic point $x$ of (prime) period $n$, \emph{orientation preserving/reversing} if $f^n$ preserves/reverses orientation in small neighbourhoods of $x$ (i.e. $Df^n(x)>0$/$Df^n(x)<0$). 

Denote the orbit of the critical point by $c_i=f^i(c)$.  Then by the definition of $k_0$ and $p$ above, $\text{orb}(c)=\{c_0, \ldots, c_{k_0+p-1}\}$.    Let $\A:= \{A_1, \ldots, A_{M}\}$ for $M=k_0+p-1$ be the set of ordered open intervals in $[f^2(c), f(c)]$ with boundary points from orb$(c)$.  This forms a Markov partition for $f$ on $[f^2(c), f(c)]$.\footnote{Distortion is unbounded.}  The partition induces a dynamical coding on $I':=[f^2(c), f(c)] \sm \bigcup_{n\ge 0}f^{-n}\left(\text{orb}(c) \right)$: to each $x\in I'$ there is a sequence $(x_0, x_1, \ldots )\in \{1, \ldots, M\}^{\N_0}$ with $f^i(x)\in A_{x_i}$.   Let $\Sigma\subset \{1, \ldots, M\}^{\N_0}$ be the corresponding subshift of finite type.
 For each $n\in \N$, we refer to $[x_0, \ldots, x_{n-1}]:= \{(y_0, y_1, \ldots) \in \Sigma: y_0=x_0, \ldots, y_{n-1} = x_{n-1}\}$ as an \emph{$n$-cylinder}, or \emph{cylinder of depth $n$}.  There will be a unique topologically transitive component: we then remove any component which does not intersect this.   The shift is also locally eventually onto and thus topologically mixing.

We will abuse notation and refer to cylinders in $\Sigma$ and the intervals in $I$ they represent  by $w = [x_0, \ldots, x_n]$.  We will write the word 
$w\in  \cup_{n\ge 1} \cup_{w'\in  \{ 1, \ldots, M\}^n }w'$ and the corresponding cylinder $[w]$ as just $w$.
Given cylinders $w_1=[x_0, \ldots, x_n]$ and $w_2=[y_0, \ldots, y_m]$, the concatenation $[w_1w_2]$ is the cylinder 
$[x_0, \ldots, x_n, y_0, \ldots, y_m]$.  When needed, we denote by $\pi$ the projection from $\Sigma$ to $I$.


\subsection{The inducing scheme}
\label{ssec:inducing}
 For $N \in \mathbb{N}$ to be chosen below, we will take a collection of cylinders $w_1, \ldots, w_K\in  \{ 1, \ldots, M \}^N $ containing $z$ and consider the first return map $F=\sigma^\tau$ to $\Sigma\sm  \{w_1, \ldots, w_K\}$ where $\tau$ is the first return time.  We consider the domains of this map to be cylinders $w\in \cup_{n\ge 1} \cup_{w'\in  \{ 1, \ldots, M\}^n }w'$, rather than unions of these.
 Let $\Sigma_{N,K}$ denote the set of one-cylinders for $F$.

\begin{lemma}
Given $K\in \N$, for any $\eta>0$ there exists $N=N(\eta, K) \in \N$ such that for all $n \in \N$, 
$\#\{w \in \Sigma_{N,K} : \tau(w)  =n\}=O(e^{\eta n})$.
\label{lem:count}
\end{lemma}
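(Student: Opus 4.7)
The plan is to bound the number of return-time-$n$ domains by counting, in the SFT, the admissible symbolic orbits whose shift-iterates lie inside the hole $H := w_1 \cup \cdots \cup w_K$ at every intermediate time, and then showing that such orbits correspond to walks in a finite directed graph whose spectral radius shrinks as $N$ grows. A $1$-cylinder $w \in \Sigma_{N,K}$ with $\tau(w) = n$ is determined by a word $(x_0, \ldots, x_{n+N-1})$ subject to: $(x_0, \ldots, x_{N-1})$ and $(x_n, \ldots, x_{n+N-1})$ lie in the complement $Y$, and for each $j = 1, \ldots, n-1$ the intermediate block $(x_j, \ldots, x_{j+N-1})$ equals some $w_{k_j}$. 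The first and last $N$-blocks contribute a multiplicative factor of at most $M^{2N}$, depending only on $N$, so the essential task is to count \emph{hole walks} $(h_1, \ldots, h_{n-1})$ with $h_j \in \{w_1, \ldots, w_K\}$ and shift-compatibility between consecutive terms (i.e., the suffix of length $N-1$ of $h_j$ equals the prefix of length $N-1$ of $h_{j+1}$, and the resulting $(N+1)$-word is admissible in the SFT).

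These hole walks are paths in a directed graph $G_{N,K}$ on $K$ vertices whose edges encode shift-compatibility, so the number of walks of length $n-1$ is at most $K \rho_{N,K}^{n-1}$, where $\rho_{N,K}$ is the spectral radius of the adjacency matrix. It therefore suffices to show that $\rho_{N,K} \to 1$ as $N \to \infty$. A walk of length $n$ in $G_{N,K}$ corresponds to an admissible orbit of $f$ that remains inside the interval projection of $H$ for $n$ steps; since $z \in \orb(f(c))$ is a periodic point distinct from $c$, for $N$ large enough the hole is contained in a small neighborhood of $z$ (respectively of $\orb(z)$) that avoids some fixed neighborhood $U$ of the critical point. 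Ma\~n\'e's lemma (Proposition~\ref{prop:Str}(2)) then provides uniform expansion $\lambda_U > 1$ along such orbits, so the Lebesgue measure of the set of $n$-trapped orbits decays like $C \lambda_U^{-n} |H|$, and by bounded distortion along these orbits (standard Koebe-type estimates away from $c$) the $n$-cylinders contained in the trap have comparable length. This yields $O(1)$ many $n$-cylinders in the trap, forcing $\log \rho_{N,K} \to 0$ as $N \to \infty$.

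A separate, more elementary argument handles the case $p > 1$: for $N$ large no pair of hole cylinders can be shift-compatible, since requiring a single symbolic window to be adjacent to $z$ and, after one shift, again adjacent to $z$ forces the sequence of $z$ to be constant, contradicting $f(z) \ne z$ when cylinders are small; thus $G_{N,K}$ has no edges and $\rho_{N,K} = 0$ outright. Combining everything, for any $\eta > 0$ we choose $N = N(\eta, K)$ large enough that $\log \rho_{N, K} < \eta$, and then
\[
\#\{w \in \Sigma_{N,K} : \tau(w) = n\} \le M^{2N} K \rho_{N, K}^{n-1} = O(e^{\eta n}),
\]
with implied constant depending on $N$ and $K$. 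The main obstacle I anticipate is the bounded-distortion step translating the Lebesgue-measure decay of the trapped set into a $O(1)$ cylinder count; this is standard for Misiurewicz--Thurston maps because the trap sits in a region avoiding $c$, but the book-keeping near the partition points of $\orb(c)$ (which include $z$ itself and which lie at the boundaries of cylinders of every depth) has to be done carefully.
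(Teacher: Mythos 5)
Your proposal takes a genuinely different and more elaborate route than the paper, and the key step has a gap that you yourself flag. The paper's proof is a purely combinatorial over-count: a domain of $F$ with $\tau = n$ must, up to a bounded number of boundary symbols, contain roughly $n/N$ \emph{non-overlapping} $N$-blocks each drawn from $\{w_1, \ldots, w_K\}$, so the number of such domains is at most of order $K^{n/N}$; choosing $N$ with $N^{-1}\log K \le \eta$ finishes. No spectral radius, no geometry, no measure estimates -- the paper even notes this is ``a substantial over-estimate.'' You instead pass to the overlapping-block transition graph $G_{N,K}$ and try to show $\rho_{N,K}\to 1$ via Lebesgue-measure decay plus bounded distortion. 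That step does not close: Ma\~n\'e's lemma gives a \emph{lower} bound on $|Df^n|$ along trapped orbits, hence an \emph{upper} bound on the lengths of trapped $n$-cylinders. To turn ``the depth-$n$ trap has measure $O(\lambda_U^{-n})$'' into ``$O(1)$ many cylinders'' by dividing, you also need a matching \emph{lower} bound on cylinder lengths (equivalently an upper bound on $\sup|Df^n|$ over the trap), which you do not supply and which does not follow from what you cite; ``comparable length'' does not give ``boundedly many.'' Moreover, the cleanest way to bound $\rho_{N,K}$ by $K^{1/N}$ is precisely the paper's non-overlapping count, so the detour through the spectral radius does not actually save work here.

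Two secondary points. Your $p>1$ shortcut ($\rho_{N,K}=0$) applies only when all $K$ cylinders sit around $z$ itself; in that case $\tau$ is uniformly bounded and the lemma is vacuous. In the intended application (the proof of Lemma~\ref{lem:Y}) the removed cylinders are spread over all of $\orb(f(c))$, long excursions do occur, and $\rho_{N,K}\ge 1$. Also, the paper's argument is uniform over the choice of the $K$ cylinders of depth $N$ -- it uses nothing about their location -- whereas your argument for $\rho_{N,K}\to 1$ leans on $z$ being a repelling periodic point away from $c$. The combinatorial route is both simpler and more general for this lemma.
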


\begin{proof}
 If $w \in \Sigma_{N,K}$ has $\tau(w) = n$, then $w$ is an $n$-cylinder for $\sigma$ and so must
have the form 
 $[x_0w_{i_1}w_{i_2}\cdots w_{i_j}x_{n-1}]$ where $j\le n/N$.  So we can estimate the number of such cylinders by $K^{\frac nN}$ (note that this estimate comes from considering $(\Sigma, \sigma)$ to be the full shift, which is a substantial over-estimate here).  So the lemma is complete if we choose $N$ large enough that $\frac1N\log K\le \eta$.
\end{proof}

We will use the above partly to ensure our inducing scheme has exponential tails (see Proposition~\ref{prop:scheme}).  We will also choose the scheme to be compatible with the periodic structure at $z$, see Remark~\ref{rmk:spat} below. 

\begin{lemma}
\label{lem:Y}
For any $\ve_0, \eta > 0$, there exist $K, N \in \mathbb{N}$ such that $Y = \pi(\Sigma_{N,K})$ has the following properties:
\begin{enumerate}
  \item[(a)]   $\{ f^i(c) \}_{i=1}^{k_0+p-1} \cap Y = \emptyset$; in particular,  $z \notin Y$;
  \item[(b)] if $x \in I \setminus Y$ and $i \ge 1$ is minimal such that $f^i(x) \in Y$, then $f^i(x) \in (z - \ve_0, z+ \ve_0)$;
  \item[(c)] $f(Y)\supset Y$; 
  \item[(d)] let $F = \sigma^\tau$ denote the first return map to $\Sigma_{N,K}$; then
  $\# \{ w \in \Sigma_{N,K} : \tau(w) = n \} = \mathcal{O}(e^{\eta n})$;
  \item[(e)] $F: \Sigma_{N,K} \circlearrowleft$ is topologically mixing.
\end{enumerate}
\end{lemma}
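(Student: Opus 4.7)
The plan is to construct the complement $\tilde V := I \setminus Y$ as a controlled tubular neighborhood of the whole periodic orbit $\orb(z) = \{z, f(z), \ldots, f^{p-1}(z)\}$, engineered so that any forward orbit inside $\tilde V$ can only re-enter $Y$ through a narrow annular doorway at $z$. This is what will force (b).

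Choose $\ve \in (0, \ve_0)$ and let $V$ be a neighborhood of $z$ of diameter at most $\ve/\lambda_z$, realised as the (at most two) $N$-cylinders adjacent to $z$. Define
\[
\tilde V := \bigcup_{i=0}^{p-1} f^i(V),
\]
a neighborhood of $\orb(z)$. For $N$ sufficiently large the pieces $f^i(V)$ are pairwise disjoint, since the $p$ points of $\orb(z)$ are distinct and $c \notin \orb(z)$ (as $k_0 \ge 2$ forces $c$ to be non-periodic, hence outside $\orb(f(c))$). Each $f^i(V)$ is a finite union of at most $2M^i$ $N$-cylinders, so $\tilde V = w_1 \cup \cdots \cup w_K$ with $K \le 2\sum_{i=0}^{p-1} M^i$ bounded independently of $N$; set $Y := I \setminus \tilde V$.

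Property (a) is automatic because the projection $\pi:\Sigma \to I$ misses $\bigcup_n f^{-n}(\orb(c))$ by the definition of $I'$. For (b), let $x \in \tilde V$ and take $i \in \{0, \ldots, p-1\}$ minimal with $x \in f^i(V)$; then $f(x) \in f^{i+1}(V) \subset \tilde V$ whenever $i+1 \le p-1$. Iterating through the cycle $f^i(V) \to f^{i+1}(V) \to \cdots \to f^{p-1}(V) \to f^p(V)$, the crucial point is that $f^p(V) \supsetneq V$, because $z$ is an expanding fixed point of $f^p|_V$ with multiplier $\lambda_z > 1$ (Proposition~\ref{prop:Str}). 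The first time the orbit leaves $\tilde V$ is therefore a step from $f^{p-1}(V)$ into the annulus $f^p(V) \setminus V$, which sits around $z$ and (by distortion control) has diameter at most $\lambda_z|V| \le \ve < \ve_0$; hence the first entry into $Y$ lies in $(z - \ve_0, z + \ve_0)$.

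Property (c) is immediate: $f$ is at most two-to-one, and for $V$ small the two preimages of any $y \in Y$ are well separated, so cannot both lie in the small set $\tilde V$, whence $f^{-1}(y) \cap Y \neq \es$. Property (e), topological mixing of $F$ on $\Sigma_{N,K}$, follows by a standard argument from the topological mixing of $\sigma$ on $\Sigma$ combined with the Markov structure of the return. Finally (d) is a direct application of Lemma~\ref{lem:count} once $N \ge \eta^{-1}\log K$. The main obstacle is (b): one has to verify that the pieces $f^i(V)$ really are pairwise disjoint (no spurious coincidences coming from the finite set $\orb(c)$), control the diameter of the first-exit annulus $f^p(V) \setminus V$ via distortion bounds, and handle the possibility that $z$ lies on the Markov partition boundary so that $V$ is a union of two one-sided $N$-cylinders rather than one.
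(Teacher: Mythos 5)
Your construction removes cylinders only around the periodic orbit $\orb(z) = \{z, f(z), \ldots, f^{p-1}(z)\}$, whereas the paper's $Z_{\tilde N}$ additionally covers the preperiodic segment $\{f(c), \ldots, f^{k_0-1}(c)\}$, so that a neighbourhood of the \emph{entire} postcritical orbit $\orb(f(c))$ is removed (this is what the footnote in the proof is flagging). Both constructions do yield a $Y$ satisfying (a)--(e) as literally stated, but the difference is not cosmetic: removing the full postcritical orbit is precisely what makes the Koebe/bounded-distortion argument in Proposition~\ref{prop:scheme}(b) work, since each branch of the first return map must extend diffeomorphically past the boundary of its image, and every boundary component of $Y$ needs a buffer of removed cylinders on the outside --- including the boundary points that abut $f(c), \ldots, f^{k_0-1}(c)$. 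With your smaller removal those points remain one-sided boundary points of $Y$ with no buffer, so the resulting inducing scheme would not support the rest of Section~\ref{sec:returnstructure}, even though the lemma itself survives.

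There is also a genuine gap in your argument for (c). You claim that for $V$ small the two preimages of any $y \in Y$ ``are well separated, so cannot both lie in the small set $\tilde V$.'' Neither clause is right: when $y$ is close to $f(c)$ the two preimages coalesce near $c$, and $\tilde V$ is not a single small set but a union of $p$ short intervals spread around $\orb(z)$, so two far-apart preimages could a priori sit in different components. The correct argument is dynamical: if $y_1, y_2 \in f^{-1}(y)$ both lie in $\tilde V$ with $y_i \in f^{j_i}(V)$, then $j_i < p-1$ would force $y = f(y_i) \in f^{j_i + 1}(V) \subset \tilde V$, contradicting $y \in Y$; hence $j_1 = j_2 = p-1$, so both preimages lie in the single short interval $f^{p-1}(V)$, which misses $c$ for $V$ small --- impossible since $y_1, y_2$ straddle $c$. (If $y$ has a single preimage near $c$, that preimage is in $Y$ since $c \notin \orb(z)$.) The cycling argument for (b) via $f^p(V)\setminus V$, the count in (d) via Lemma~\ref{lem:count}, and the appeal to mixing of $\sigma$ for (e) are fine and match the paper's strategy.
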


\begin{proof}
 Fix $\ve_0, \eta >0$. 
For $\tilde N\in \N$, we will choose a pair of $\tilde N$-cylinders $w_L, w_R$ which are adjacent to $z$ (to the left and right of this point respectively) and, observe that for $\tilde N$ large, $f^i(w_L), f^i(w_R)$ are $(\tilde N-i)$-cylinders for $i=1, \ldots, p-1$.  If $i\in \{1, \ldots, p\}$ is minimal with $z=f^{i+k_0}(c)$ then define $w'_L = f^{p-i}(w_L), w_R' = f^{p-i}(w_R)$ and set $w_{j, L}, w_{j, R}$ to be the 
$(\tilde N -p +i +j)$-cylinders along the orbit segment $\{ f(c), \ldots, f^{k_0-1}(c) \}$ 
mapping to $w'_L, w'_R$ by $f^j$ for $j=0, \ldots, k_0-1$.  Then define $Z_{\tilde N}:=\{w_{j, L}, w_{j, R}: j= 0,\ldots, k_0-1\}\cup \{f^i(w_L), f^i(w_R): i=0, \ldots, p-1\}$ be the sets we remove from $\Sigma$.  The deepest cylinders here are either (i) $w_{k_0-1, L}, w_{k_0-1, R}$; or (ii) $w_L, w_R$.  In case (i), these are $N$-cylinders, where $N= \tilde N-p +i +k_0-1$.  Since, for example $w_{k_0-1-i, L}$ consists of at most 
 $M^{i}$ $N$-cylinders (a significant over-estimate), we have removed at most $K=2\frac{M^{k_0+p-1}-1}{M-1}$ $N$-cylinders.  So for $\eta>0$, let  $N_0 =N(\eta, K)\in \N$ be as in Lemma~\ref{lem:count}.  
Set\footnote{Observe that we are removing a neighbourhood of the orbit of the critical \emph{value}: the critical point is not removed.}  $\Sigma'=\Sigma'(N_0):= \Sigma\sm  Z_{\tilde N_0}$  (here $\tilde N_0=N_0+p+i-k_0+1$) and let $F=\sigma^\tau$ be the first return map to $\Sigma'$.   Lemma~\ref{lem:count} implies that the number of $n$-cylinders with first return time $n$ is $O(e^{\eta n})$.  Case (ii) follows similarly.

Define $Y = \pi(\Sigma')$.  Property (a) of the lemma is obvious. The choice of $\tilde N_0$ above guarantees property (d).  The definition
of $Z_{\tilde N_0}$ guarantees that returns to $Y$ must occur in a neighbourhood of $z$ and that $f(Y)\supset Y$, and choosing $\tilde N$ sufficiently large forces this neighbourhood to intersect $(z-\ve_0, z+\ve_0)$, so (b) and (c) hold. 
Finally, (e) follows from the mixing of $\sigma$.
\end{proof}

We will make our final choice of $Y$ once we choose $\ve_0$ before Remark~\ref{rmk:spat}.
We will abuse notation and let $F = f^\tau$ denote the first return map to $Y$ as well.  

Let $\{I_i\}_i$ be the domains (intervals) of monotonicity of the first return map and for brevity, write $\tau_i=\tau|_{I_i}$.  Note that each $I_i$ is associated to a cylinder as in the symbolic model.

\begin{lemma}
There exists $P\in \N_0$ such that $I_i$ contains a periodic point of prime  period $\tau_i+k$ with $k\le P$.
\label{lem:sym_mix}
\end{lemma}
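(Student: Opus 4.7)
My plan is to use the symbolic model $(\Sigma,\sigma)$ on the finite alphabet $\{1,\ldots,M\}$, together with its topological mixing, to close each branch of the first return map into a periodic orbit in a uniformly bounded number of extra steps. Write $I_i$ as the $\tau_i$-cylinder $[a_0a_1\cdots a_{\tau_i-1}]$ in $\Sigma$.

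First I would establish a lower bound on the prime period. If $x\in I_i$ is periodic with prime period $q$, then $f^q(x)=x\in I_i\subset Y$, so $x$ returns to $Y$ at time $q$; since $\tau_i$ is the first return time to $Y$ on $I_i$, this forces $q\ge\tau_i$. Hence any periodic point in $I_i$ has prime period of the form $\tau_i+k$ with $k\ge 0$, and only a uniform upper bound on $k$ remains.

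Next I would invoke uniform mixing: since $\sigma$ is a topologically mixing SFT on a \emph{finite} alphabet, there exists $P_0\in\N$ such that for every $n\ge P_0$ and every pair of symbols $u,v\in\{1,\ldots,M\}$ there is an admissible word of length $n+1$ joining $u$ to $v$. Applied with $u=a_{\tau_i-1}$, $v=a_0$, $n=P_0$, this yields an admissible periodic symbolic sequence whose repeating block is $a_0a_1\cdots a_{\tau_i-1}s_1\cdots s_{P_0-1}$, of length $\tau_i+P_0-1$ and beginning with the word of $I_i$. By the Markov structure this projects to a periodic point $x\in I_i$ with $f^{\tau_i+P_0-1}(x)=x$. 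Combining with the lower bound, the prime period of $x$ divides $\tau_i+P_0-1$ and is $\ge\tau_i$, hence equals $\tau_i+k$ for some $0\le k\le P_0-1$; setting $P:=P_0-1$ works uniformly in $i$. The only non-combinatorial input is the uniform mixing exponent $P_0$, which is the standard consequence of topological mixing on a finite alphabet (some power of the transition matrix is entrywise positive), so I do not anticipate any significant obstacle.
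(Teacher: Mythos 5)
Your argument is essentially the same as the paper's: use the symbolic (SFT) structure and the finite alphabet to close the word defining $I_i$ into an admissible periodic word by appending a connecting word of uniformly bounded length, then project to a periodic point in $I_i$. The one thing you do that the paper's one-line proof leaves implicit is the verification that the resulting prime period really has the form $\tau_i+k$ (via the first-return-time lower bound $q\ge\tau_i$ combined with $q\mid\tau_i+P_0-1$), which is a small but welcome addition.
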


\begin{proof}
Since $I_i$ corresponds to some $n$-cylinder $[x_0w_{i_1}w_{i_2}\cdots w_{i_j}x_{n-1}]$, the lemma is saying that there is a chain of allowable transitions $x_{n-1} \mapsto y_1\mapsto y_{k-1}\mapsto x_0$, for $k\le P$, which follows since the shift is an SFT.
\end{proof}

\begin{proposition}
\label{prop:scheme}
\begin{enumerate}
\item[(a)] $F$ has finitely many images, i.e., $\{F(I_i)\}_i$ is a finite set; 

\item[(b)] (bounded distortion) there exists $C_d>0$ such that if $x,y$ belong to the same $n$-cylinder for $F$, then
\[
\left| \frac{DF^n(x)}{DF^n(y)} - 1 \right| \le C_d |F^n(x) - F^n(y)| \, ;
\]

\item[(c)]  $|\{\tau= n\}| = O\left(e^{-n(\log\lambda_{per}-\eta)}\right)$,  where $| \cdot |$ denotes the
Lebesgue measure of the set. 
\end{enumerate}
\end{proposition}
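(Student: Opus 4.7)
The plan is to handle the three parts sequentially, with part (b) the key technical step on which (c) depends.

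Part (a) falls out of the symbolic/Markov structure: each domain $I_i$ is identified with a cylinder $[x_0,\ldots,x_{\tau_i-1}]$, and since $(\Sigma,\sigma)$ is a subshift of finite type, the image $\sigma^{\tau_i}([x_0,\ldots,x_{\tau_i-1}])$ depends only on the terminal symbol $x_{\tau_i-1}$.  Intersecting with $\Sigma_{N,K}$, whose alphabet is finite, then leaves only finitely many possibilities for $F(I_i)$.

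For part (b), I plan to use a Koebe-type argument tailored to the induced Markov scheme for our S-unimodal map.  Using (a), $\{F(I_i)\}$ is a finite collection of intervals in $Y$, so for each $i$ one can choose an $i$-independent collar $\widehat{F(I_i)}\supset F(I_i)$ of definite size, still inside a slightly larger Markov element contained in $Y$.  Pulling this collar back along the monotone branch $f^{\tau_i}|_{I_i}$ yields an enlargement $\widehat I_i\supset I_i$ on which $f^{\tau_i}$ remains monotone: at each intermediate step the orbit of $\widehat I_i$ lies in a single element of the Markov partition $\A$, so the critical point $c$ is avoided.  The convexity of $|Df|^{-1/2}$ (an S-unimodal axiom) then delivers the cross-ratio estimate that upgrades this uniform Koebe space into the pointwise distortion bound of the stated form.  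The main obstacle is establishing the uniform collar: one must verify that the finitely many $F(I_i)$ sit strictly inside pairwise-compatible Markov intervals in $Y$, which is a consequence of the finiteness of $\orb(f(c))$ and the careful construction in Lemma~\ref{lem:Y}.

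Part (c) then follows quickly from (b).  By Lemma~\ref{lem:sym_mix}, each $I_i$ with $\tau_i=n$ contains a periodic point $p_i$ of prime period $n+k_i$ with $k_i\le P$, and Proposition~\ref{prop:Str}(1) gives $|Df^{n+k_i}(p_i)|\ge \lambda_{per}^{n+k_i}$; absorbing the at-most-$P$ extra iterates into a constant yields $|DF(p_i)|=|Df^n(p_i)|\ge C'\lambda_{per}^n$.  By the bounded distortion from (b), $|DF|$ is uniformly comparable across $I_i$, so $|I_i|\le C\,|F(I_i)|\,\lambda_{per}^{-n}$.  Summing over the $O(e^{\eta n})$ domains with $\tau_i=n$ guaranteed by Lemma~\ref{lem:Y}(d), with $\eta$ fixed at the outset of the construction, gives $|\{\tau=n\}|=O(e^{-n(\log\lambda_{per}-\eta)})$, as required.
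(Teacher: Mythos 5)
Your proof is correct and takes essentially the same approach as the paper's: part (a) from the SFT structure, part (b) from the Koebe Lemma with uniform Koebe space supplied by the removed neighbourhood of $\orb(f(c))$ in the construction of $Y$, and part (c) via Lemma~\ref{lem:sym_mix}, Proposition~\ref{prop:Str}(1), bounded distortion, and the counting bound from Lemma~\ref{lem:count}. The paper dispatches (b) in a single sentence; your collar-pullback elaboration correctly fills in the mechanism behind that sentence.
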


\begin{proof}
Property (a) follows from the SFT structure and Property (b) follows from the Koebe Lemma added to the fact that we removed a neighbourhood of the orbit of the critical value to create $Y$.

To prove (c), by Lemma~\ref{lem:sym_mix}, given $I_i$, there is a periodic point $x\in I_i$ of period $\tau_i+k$ for $0\le  k \le P$.   Thus, by Proposition~\ref{prop:Str}(1),
\begin{equation}
\label{eq:tilde K}
|Df^{\tau_i}(x)|\ge \lambda_{per}^{\tau_i+k}|Df|_\infty^{-k}\ge\left(\frac{\lambda_{per}}{|Df|_\infty}\right)^{P} \lambda_{per}^{\tau_i}= \tilde K\lambda_{per}^{\tau_i}.
\end{equation}
Since we also have bounded distortion, we see that $|I_i|=O( \lambda_{per}^{-\tau_i})$ and hence applying Lemma~\ref{lem:count}, $|\{\tau= n\}| = O\left(e^{-n(\log\lambda_{per}-\eta)}\right)$, as required.
\end{proof}

In the light of this result, we will assume $\eta\in \left(0,\frac12\log\lambda_{per}\right)$ from here on (we make a final choice for $\eta$ before Remark~\ref{rmk:spat}.

\begin{remark}
The cylinder structure here means that $\left(\cup_{k\ge 1}f^k(\bd Y)\right)\cap \left(Int(Y^c) \sm \text{orb}(f(c))\right)=\es$.  In particular supposing that  $I_i$ has $f^k(I_i) = (x, y)$ in a neighbourhood of $z$ and $y<z$, then there will be a set of domains adjacent to $I_i$ such that the closure of the union of their $f^k$ iterates covers $(x, z)$.  (Similarly if $f^k(I_i)$ lies to the right of $z$.)
\label{rmk:cover}
\end{remark}

 Recall that $\mu$ is the unique acip for $f$ according to Proposition~\ref{prop:Str}. 
Define $\mu_Y:=\frac{\mu|_Y}{\mu(Y)}$, which by Kac's Lemma is an $F$-invariant probability measure.  
We can recover $\mu$ from $\mu_Y$ by the well-known formula, 
\begin{equation}
\label{eq:meas_proj}
\mu(A) = \mu(Y) \sum_i \sum_{j=0}^{\tau_i-1}\mu_Y(I_i \cap f^{-j}(A)), \; \mbox{ for } A \subset I.
\end{equation}

 Note that $f^{k_1}$ in a neighbourhood of $c$ is a 2-to-1 map composed with a diffeomorphism. 
The Hartman-Grobman Theorem implies that $f^p$ restricted to any small enough neighbourhood of $z$, the linearisation domain, is conjugated to the transformation $x\mapsto \lambda_z x$ and is indeed asymptotic to this for $x$ close to $z$.  Let $\eps_0>0$ be such that this theorem holds in $(z-\eps_0, z+\eps_0)$. 

For $\ve \le \ve_0$, let $\delta= \delta(\eps)$ be such that $f^{k_1}|_{(c-\delta, c+\delta)}$ is 2-to-1 onto either $(z-\eps, z]$ or $[z, z+\eps)$.  
Set $\lambda_\delta:= \lambda_{(c-\delta(\eps_0), c+\delta(\eps_0))}$ and $K_\delta:= K_{(c-\delta(\eps_0), c+\delta(\eps_0))}$ from Proposition~\ref{prop:Str}.  Then choose $\eta > 0$ such that
$\eta < \min \{ \frac 12 \log \lambda_{per}, \log \lambda_\delta\}$.
Now with $\ve_0$ and $\eta$ fixed,
we choose $ N_1 \ge N(\eta, K)$ so that the cylinders making up $Z_{\tilde N_1}$  from the proof of
Lemma~\ref{lem:Y}  are contained in $\cup_{i=1}^{p}f^i(z-\eps_0, z+\eps_0)$ and adjust our inducing domain to 
$\Sigma'=\Sigma'(N_1)$, thus also adjusting $Y$.  

\begin{definition}
\label{def:Y}
With $\ve_0$, $\eta$ and $N_1$ fixed as above, we make our final choice of $Y$, which will remain fixed from here on. 
\end{definition}

With $Y$ fixed, we choose $\eps_1\in (0, \eps_0]$ so that $Y\cap\left( \cup_{i=1}^{p}f^i(z-\eps_1, z+\eps_1)\right) = \es$.  

\begin{remark}
\begin{enumerate}
\item
Our choice of cylinders around $\orb(z)$ means that any domains of the first return map to $Y$ can enter $(z-\eps_1, z+\eps_1)$ at most once, since the linear structure of the dynamics in this region means that any domain that does so must be `spat out' into $Y$, and thus have made a return, before it can escape the linearisation domain.
\item 
Moreover, due to the periodic way our cylinders $Z_{\tilde N}$ were chosen, if $f^k(I_i)$ for $k<\tau_i$ is in the neighbourhood of $z$, then it cannot return to $Y$ in less than $p$ steps.  Indeed, $\tau_i-k$ is a multiple of $p$.
\end{enumerate}
\label{rmk:spat}
\end{remark}


It will be convenient to treat the left and right neighbourhoods of $z$ separately.  For this purpose, we introduce
the following notation for asymmetric holes.
Given $\eps_L, \eps_R>0$, we write $\bm{\varepsilon}= \{\eps_L, \eps_R\}$ and define the corresponding hole at $z$ by $H_{\be}(z) = (z-\ve_L, z+\ve_R)$.   We will abuse notation in the following ways: we write $\be=0$ to mean $\eps_L=\eps_R=0$, and we write $\be<\eps$ (also $\be \in (0, \eps)$) to mean $\eps_L, \eps_R<\eps$ (and $\eps_L, \eps_R\in (0, \eps)$).  We extend the definition of $\delta(\eps)$ to $\delta(\be)$ in the natural way, noting that for example in the orientation preserving case $\delta(\be)$ will only depend on one of $\eps_L$ and $\eps_R$.

For notation, we will denote by $(z-\eps_L)_i$, $(z+\eps_R)_i$ the local inverse of $f^{ip}$ applied to $z-\eps_L$, $z+\eps_R$. 

For a hole $H_{\be}(z)$, we denote by 
$H_{\be}'$ the set of intervals $J$ in $Y$ such that $f^s(J)\subset H_{\be}$ for $s<\tau|_J$.  
 Thus $H'_{\be}$ represents the `induced hole' for the first return map $F$.
In fact, due to the construction here, each such $J$ will be one of the domains $I_i$.  The main expression we must estimate is  $\frac{\mu(H_{\be}')}{\mu(H_{\be})}$ for appropriately
chosen $\eps_L, \eps_R$: we will show how this allows us to estimate 
$\frac{\mu(H_\eps')}{\mu(H_\eps)}$ for all sufficiently small $\ve>0$ in Section~\ref{sec:beta approx}.


\subsection{Chain structure}

Let the interval of $I\sm Y$ containing $z$ be denoted by $[a, b]$.  Now suppose $I_i$ is a domain of $Y$ with $f^{s}(I_i)\subset [a, b]$ with $1\le s\le\tau_i$.  Since there is an interval $U$ closer to $z$ than $f^{s}(I_i)$ with $f^p(U) = f^{s}(I_i)$, then by the first return structure, there must be some $I_{i'}$ with $f^{s}(I_{i'})=U$.  Iterating this, we call the resulting domains a \emph{chain}.  We consider $I_i$ here to have some \emph{depth} $k$, and the $I_{i'}$ above to have depth $k+1$ and write these as $I_i^k$ and $I_i^{k+1}$ respectively.  For each chain there is an `outermost' interval which we consider to have depth 1: this is the interval $I_{i''} =I_i^1$, a domain in $Y$ where $f^s(I_{i''}) \subset [a, b]$, but $f^{s+p}(I_{i''})$ not contained in $[a, b]$.  Thus chains are denoted $\{I_i^k\}_{k=1}^\infty$ and $f^{s+kp}(I_{i}^k) =  f^s(I^1_i)$.  

Note that the cylinder structure implies $f^p(a), f^p(b)\notin (a, b)$.

\begin{lemma}
\begin{enumerate}
\item If $f^p$ is orientation preserving around $z$ then two elements $I_i^k$ and $I_i^{k'}$ of a chain have depths differing by 1 if and only if they are adjacent to each other.    Here $f^{s+p}(I_i^{k+1}) = f^s(I_i^k)$ for $s$ as above.
\item If $f^p$ is orientation reversing around $z$ then a chain $\{I_i^k\}_k$  has a neighbouring chain $\{I_j^k\}_k$ so that two elements $I_i^k$ and $I_j^{k'}$ have depths differing by 1 if and only if they are adjacent to each other.  Here the $I_i^k, I_j^{k+1}, I_i^{k+2}$ are adjacent intervals and moreover $f^{2p}(I_i^{k+2})= I_i^k$.
\end{enumerate}
\label{lem:simple_chain}
\end{lemma}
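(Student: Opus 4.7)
The plan is to analyze the dynamics of $f^p$ on $[a,b]$ and pull the resulting interval structure back to $Y$ through the common initial iterate $f^s$ shared by a chain. Since $z$ is a repelling periodic point and $c$ has zero derivative, $c\notin\orb(z)$, so $f^p$ is a local diffeomorphism near $z$; combined with the cylinder structure of Remark~\ref{rmk:cover}, $f^p(a),f^p(b)\notin(a,b)$. For $k\ge 1$ introduce the escape-time sub-intervals
\begin{align*}
J_k^L &= \{x\in[a,z) : f^{jp}(x)\in[a,b]\text{ for }1\le j\le k-1,\ f^{kp}(x)\notin[a,b]\}, \\
J_k^R &= \{x\in(z,b] : f^{jp}(x)\in[a,b]\text{ for }1\le j\le k-1,\ f^{kp}(x)\notin[a,b]\},
\end{align*}
which tile $[a,z)$ and $(z,b]$ and accumulate at $z$.

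In the orientation-preserving case (item~(1)), $f^p$ preserves each side of $z$, so the $J_k^L$ are adjacent intervals arranged outward-to-inward, with $f^p(J_k^L)=J_{k-1}^L$ (and $f^p(J_1^L)$ escaping $[a,b]$ into $Y$). Any chain element then satisfies $f^s(I_i^k)=J_k^L$: the base case $f^s(I_i^1)=J_1^L$ holds because the first-return image $f^{s+p}(I_i^1)$ is a full image of $F$ equal to the Markov interval $f^p(J_1^L)$, and the defining relation $f^s(I_i^{k+1})=(f^p)^{-1}(f^s(I_i^k))$ propagates the identity inductively. All chain elements share their first $s$ coding symbols, so they lie in a common $s$-cylinder on which $f^s$ is a monotonic bijection. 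Adjacency of $J_k^L$ and $J_{k+1}^L$ in $[a,z)$ then lifts through this monotonicity to adjacency of $I_i^k$ and $I_i^{k+1}$ in $Y$, and since the chain images form a linearly ordered sequence, only consecutive depths can be adjacent. The identity $f^{s+p}(I_i^{k+1})=f^s(I_i^k)$ is simply the chain relation.

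In the orientation-reversing case (item~(2)), $f^p$ swaps sides near $z$, so along a single chain the images $f^s(I_i^k)$ alternate between $J_k^L$ and $J_k^R$. I would construct the neighboring chain $\{I_j^k\}$ by starting from an outermost element $I_j^1$ with $f^s(I_j^1)=J_1^R$, so that its images alternate oppositely. Grouping chain elements by the side of $z$ their image occupies, the left-image $s$-cylinder contains exactly $I_i^1,I_j^2,I_i^3,I_j^4,\ldots$ (with $f^s$-images $J_1^L,J_2^L,J_3^L,\ldots$ arranged in decreasing distance from $z$), and symmetrically on the right. Since the $J_k^L$ are adjacent in their natural order and $f^s$ is monotonic on the left-image cylinder, this ordering lifts back to $Y$, so $I_i^k,I_j^{k+1},I_i^{k+2}$ form three consecutive adjacent intervals. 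The claimed identity ``$f^{2p}(I_i^{k+2})=I_i^k$'' is the image statement that two applications of $f^p$ return to the starting side and take $J_{k+2}^{L/R}$ onto $J_k^{L/R}$; equivalently $f^{s+2p}(I_i^{k+2})=f^s(I_i^k)$, the chain relation iterated twice.

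The main obstacle is the identification $f^s(I_i^k)=J_k^L$ (and its alternating analog): one must verify that $J_1^L$ is not further subdivided by restrictions coming from the first-return structure, i.e., that a single first-return domain of $F$ has $f^s$-image equal to $J_1^L$. This relies on the careful cylinder construction of $Y$ together with Remark~\ref{rmk:cover}: the Markov interval immediately to the left of $a$ (which contains $f^p(J_1^L)$) is a full image of $F$, so its preimage $J_1^L$ under $f^p|_{A_L}$ is captured by a single domain $I_i^1$. Once this is in place, both cases reduce to monotonicity of $f^s$ on the common $s$-cylinder combined with the explicit tiling of $[a,z]$ and $[z,b]$ by the $J_k^{L/R}$, with the reversing case requiring additional bookkeeping to track the interleaving of the two chains.
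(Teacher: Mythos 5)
The proposal reaches the right conclusion and is essentially correct, but it reverses the logical order of the paper's argument and trades explicitness in one place for an unproven assertion in another. The paper works \emph{forward} from the outermost element $I_i^1$: it takes an adjacent domain $I_j^k$, tracks both through the first return time $t=\tau|_{I_i^1}$, observes that $f^t(I_i^1)\subset Y$ has the boundary point $a$ in its boundary, and uses the Markov first-return structure to force $f^{t+p}(I_j^k)$ (the return of $I_j^k$) to coincide with $f^t(I_i^1)$, hence $I_j^k=I_i^2$; iterating gives the full adjacency statement, and the orientation-reversing case follows the same pattern via $f^{2p}$. Only afterwards, in Section~\ref{ssec:Markov}, does the paper introduce the boundary sequences $(a_i)_i,(b_i)_i$ and record that $f^s(I_i^k)\in\{(a_{k-1},a_k),(b_k,b_{k-1})\}$, which is precisely your escape-time tiling $J_k^L,J_k^R$. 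You instead \emph{start} from the tiling, prove $f^s(I_i^k)=J_k^L$ inductively, and then lift adjacency of the $J_k$'s through monotonicity of $f^s$ on a common $s$-cylinder. This makes the geometric picture more transparent and matches the figure, but the lifting step rests on the claim that all chain elements (and, in the orientation-reversing case, the interleaved elements of two chains) share their first $s$ coding symbols; you assert this but do not argue it. It is true, and is implicit in the paper's definition of a chain (the inductive choice of $I_i^{k+1}$ is the one in the same $s$-cylinder as $I_i^k$, guaranteed by the Markov transition $x_{s-1}\to A_{x_s}$ together with $U\subset A_{x_s}$), but the paper's forward-orbit argument circumvents having to state this explicitly, so if you adopt your route you should include a sentence justifying the common $s$-cylinder. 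You correctly flag the base-case issue ($f^s(I_i^1)=J_1^L$ rather than a strict subinterval) and resolve it with Remark~\ref{rmk:cover}, and you correctly read the displayed identity $f^{2p}(I_i^{k+2})=I_i^k$ as the image relation $f^{s+2p}(I_i^{k+2})=f^s(I_i^k)$.
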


See Figure~\ref{fig:chain} for a sketch of case (1) of the lemma.

\begin{figure}[h]
\begin{tikzpicture}[thick, scale=0.5]

\draw[red,, - ] (-0.5,0) -- (1,0);
\draw[red, loosely dashed ] (-2.5,0) -- (-0.5,0);
\draw (-1,0.3) node[above] {{\small $Y$}};

\draw[ - ] (1,0) -- (27.5,0);
\draw[ loosely dashed ] (27.5,0) -- (29.5,0);
\draw[very thick, - ] (1,-0.4) -- (1,0.4);
\draw (1,0.3) node[above] {{\small $a$}};

\draw[very thick, - ] (21.7,-0.4) -- (21.7,0.4);
\draw (21.5, 0.3) node[above] {{\scriptsize $(z-\eps_L)_1$}};

\draw[very thick, - ] (18.3,-0.4) -- (18.3,0.4);
\draw (18.3,0.3) node[above] {{\scriptsize $z-\eps_L$}};

\draw[very thick, - ] (24.1,-0.4) -- (24.1,0.4);
\draw (24.3,0.3) node[above] {{\scriptsize $(z-\eps_L)_2$}};

\draw[very thick, - ] (27,-0.4) -- (27,0.4);
\draw (27,0.3) node[above] {$z$};

\draw[blue, very thick, (-)]  (22.8,0) -- (24.7,0);
\draw (23.8,-0.2) node[below] {{\scriptsize $f^{s}(I_{i}^k)$}};

\draw[blue, very thick, (-)]  (19.4,0) -- (22.8,0);
\draw (21.1,-0.2) node[below] {{\scriptsize $f^{s}(I_{i}^{k-1})$}};

\draw[blue, very thick, (-)]  (15.4,0) -- (19.4,0);
\draw (17.5,-0.2) node[below] {{\scriptsize $f^{s}(I_{i}^{k-2})$}};

\draw[blue, very thick, (-)]  (1,0) -- (5.6,0);
\draw (3,-0.2) node[below] {{\scriptsize $f^{s}(I_{i}^{1})$}};

\draw[green, ->] (24.0,1.5) parabola[bend pos=0.5] bend +(0,2) (21.8,1.5);
\draw (22.8,3.4) node[above] {$f^{p}$};

\draw[green, ->] (21.5,1.5) parabola[bend pos=0.5] bend +(0,2) (18.5,1.5);
\draw (20.3,3.4) node[above] {$f^{p}$};

\draw[green, ->] (18.3,1.5) parabola[bend pos=0.5] bend +(0,2) (14.2,1.5);
\draw (16.3,3.4) node[above] {$f^{p}$};

\draw[green, ->] (8.5,1.5) parabola[bend pos=0.5] bend +(0,2) (4.5,1.5);
\draw (6.8,3.4) node[above] {$f^{p}$};

\draw[green, thick, loosely dashed](9.5, 2.5)--(13.5, 2.5);

\draw[green, ->] (4.3,1.5) parabola[bend pos=0.5] bend +(0,2) (-0.5,1.5);
\draw (2.8,3.4) node[above] {$f^{p}$};

\end{tikzpicture}
\caption{The chain structure mapped near $z$.  We are assuming that $f^p$ is locally orientation preserving and focussing attention on the left-hand side of $z$.  We sketch some intervals of the images (by $f^s$) of the chain $\{I_{i}^k\}_{k\ge 1}$.  Note that here $\tau|_{I_i^k} = s+k$.}
\label{fig:chain}
\end{figure}

\begin{proof}
We first assume that $\{I_i^k\}_k$ is a chain and that $f^p$ is locally orientation preserving around $z$.  Suppose that $I_j^k$ is adjacent to $I_i^1$.  
Suppose  
for some $t\ge1$ we have  $f^{t-1}(I_i^1)\cap Y =\es$, but $f^{t}(I_i^1)\subset Y$.  As in Remark~\ref{rmk:spat}, according to our choice of $Z_{\tilde N_0}$ in the proof of Lemma~\ref{lem:Y},
$f^t(I^1_i)$ can only reenter $Y$ in an $\ve_0$-neighbourhood of $z$.  Note that $f^{t}(I_i^1)$ must contain either $a$ or $b$  in its boundary: we assume this is $a$.  We observe that $a$ is also a boundary point of $f^{t}(I_j^k)$ and $f^{t}(I_j^k)\cap Y =\es$.  But then we must have $f^{t+1}(I_j^k)\subset Y$, and moreover this domain must contain $a$ in its boundary.  Indeed, by the first return structure it must coincide with $f^{t}(I_i^1)$.  But this implies that actually $I_j^k=I_i^2$.  This means that the left boundary point of $f^{t-1}(I_i^1)$ must map by $f^p$ to the other boundary point $a$.  In the same way we see that all the domains $\{I_i^k\}_k$ are adjacent to each other and have $f^{p}$ mapping adjacent $f^s$-images of the chain to each other.

In the orientation reversing case, suppose that $I_i^1$ is adjacent to $I_j^k$.  Then $f^{s+p}(I_i^1) \subset Y$.  The Markov structure implies that $f^{s+p}(I_j^k) \subset [a, b]$ and that these intervals both contain either $a$ or $b$ in their boundary, which means that $f^{s+2p}(I_j^k) \subset Y$ and so $k=2$.  By the linear structure of $f^{2p}$ in this region, if $I_\ell^k$ is adjacent to $I_j^2$, then $f^{s+2p}(I_\ell^k) = f^s(I_i^1)$, so $I_\ell^k= I_i^3$.  To prove this starting with $I_i^k$ with $k\ge 1$, we iterate the argument by $f^{(k-1)p}$.  The mapping of the $f^s$-images to each other by $f^{2p}$ follows as in the orientation preserving case.
\end{proof}

In case (2) above we call $\{I_i^k\}_k$ and $\{I_j^k\}_k$ \emph{alternating chains}.  We also call any $\{I_i^k\}_{k\ge j}$, for some $j\ge 1$, a \emph{subchain}.

Adding the result of this lemma to the first return structure we see that the images in $[a, b]$ of any chain must be of the same fixed form, i.e., if $\{I_i^k\}_k$, $\{I_j^k\}_k$ are two chains which map by $f^s$ and $f^t$ respectively into $[a, b]$, then assuming $f^s(I_i^k)$ and $f^t(I_j^k)$, for some depth $k\ge 1$, are on the same side of $z$ then these images coincide.  Moreover, $\overline{\cup_k f^s(I_i^k)}$ will be either $[a, z]$ or $[z, b]$ if $f^p$ is orientation preserving around $z$, and $\overline{\cup_k f^s(I_i^k\cup I_j^k)} = [a, b]$ if $f^p$ is orientation reversing around $z$, and the alternating chains are as in the statement of the lemma.


\subsection{Extra cuts for a `Markovian' property}
\label{ssec:Markov} 

Suppose that $f^p$ is orientation preserving around $z$.  Let $a_0= a$ and then inductively let $f^p(a_{i+1})=a_i$, so that $a_i\to z$ as $i\to \infty$.  We similarly define $(b_i)_i$ to the right of $z$, accumulating on $z$.  Observe that domains $I_i^k$ of $F$ which map into $[a, b]$ by some $f^s$ with $1\le s<\tau_i$ must map onto some $(a_{k-1}, a_k)$ or $(b_k, b_{k-1})$.

In the orientation reversing case we define $a_0=a$, and $a_1$ such that $f^p(b)=a_1$ and inductively define $a_{i}$ by $f^{2p}(a_{i+2})= a_{i}$.  Similarly we define $(b_i)_i$.  Here if $f^s(I_i^k)\subset [a, b]$ for $1\le s<\tau_i$ then $f^s(I_i^k) \in \{(a_{k-1}, a_k), (b_k, b_{k-1})\}$.

Note that by bounded distortion, the ratios $\frac{z-a_i}{b_i-z}$  will be comparable to $\frac {z-a}{b-z}$.

The above implies that if we were to take our holes around $z$ as, say $(a_i, b_j)$, these would be Markov holes in the sense that either an interval of the inducing scheme enters the hole before returning to $Y$, or it never intersects it.  Note that by topological transitivity the only way $z$ cannot be accumulated by domains on both the left and the right as above is if $z$ is a boundary point of the dynamical core $[f^2(c), f(c)]$, a simpler case which needs only obvious modifications.

For generic $\eps_L, \eps_R\in (0, \eps_1)$ the corresponding hole will not have this Markov property, so we make an extra cut here, namely if $z-\eps_L\in f^s(I_i)$  (in the sketch in Figure~\ref{fig:chain}, $I_i=I_i^{k-2}$) then we split $I_i$ into two so that one of the resulting intervals maps by $f^s$ into the hole and one maps outside, and similarly for $z+\eps_R$, thus defining $F_{\be}$ from $F=F_0$. 
 For this map, in the generic case, the chains no longer have common boundary points, but now, in the orientation preserving case, they come in pairs $\{I_i^k\}_k, \{\hat I_i^k\}$, where $I_i^{k+1}\bar{\cup} \hat I_i^k$ is a domain of $F$, and $\bar{\cup}$ denotes the union of the open intervals along with their common boundary point.

In the case $f^p$ is orientation reversing around $z$, we may need to cut a domain of $F$ twice to produce $F_{\be}$.  For simplicity, we will not address this case here, but all the ideas below go through similarly.  From now on, unless indicated otherwise, when we discuss our induced map we mean $F_{\be}$.

Remark that due to the extra cuts, although the domains of $F_{\be}$ have the property that each one either maps into $H_{\be}(z)$ or never intersects it, for generic holes this does not define a countable Markov partition for $F_{\be}$.
This is because $F_{\be}(I_i)$ may not be a union of 1-cylinders if a boundary point of $I_i$ is created by one of the
extra cuts. 

We next observe that in the orientation preserving case there is a unique pair of subchains we denote $\{I_{L}^k\}_{k\ge k_{L, \be}}, \{\hat I_{L}^k\}_{k\ge k_{L, \be}} \subset  (c-\delta(\be),  c)$ and a unique pair of subchains $\{I_{R}^k\}_{k\ge k_{R, \be}}, \{\hat I_{R}^k\}_{k\ge k_{R, \be}}\subset  ( c, c+\delta(\be))$.  We call these the \emph{principal subchains}.    Note $\overline{\cup_{k\ge k_{L, \be}}\left(I_{L}^k\cup \hat I_{ I_L}^k\right) \cup \left(\cup_{k\ge k_{R, \be}}I_{R}^k\cup \hat I_{I_R}^k\right)}= [c-\delta(\be), c+\delta(\be)]$.  We can naturally extend this idea to the orientation reversing case. 
In either case, $\delta(\be)$ is determined by only one of $\ve_L$ or $\ve_R$, depending on whether
$f^{k_1}(c - \delta(\be), c+\delta(\be)) = (z - \ve_L, z]$ or $f^{k_1}(c - \delta(\be), c+\delta(\be)) = [z, z + \ve_R)$.
Let us call $\ve_*$ this value in $\{ \ve_L, \ve_R \}$.

 
\section{Measuring the holes}
\label{sec:holemeas}

The aim of this section is to estimate $\mu(H_{\be})$, $\mu(H_{\be}')$ and thus $\frac{\mu(H_{\be}')}{\mu(H_{\be})}$.

\subsection{The contribution from principal subchains}
\label{ssec:principal}

In order to estimate  $\mu(H_{\be}') $ and $\mu(H_{\be})$, we first estimate the contribution from the principal subchains.

\begin{lemma}  For $x$ close to $c$,
$$|x-c|\sim \left(\frac{|f^{k_1}(x)-z|}{A |Df^{k_1-1}(f(c))|}\right)^\frac1\ell.$$
\label{lem:rescale}
\end{lemma}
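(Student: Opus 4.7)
The plan is to combine the local quadratic-like expansion of $f$ at the critical point with a linearisation of the (smooth) iterate $f^{k_1-1}$ at $f(c)$.

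First, by the defining assumption on the class of maps, in a neighbourhood of $c$ we have
\[
f(x) = f(c) - A(x-c)^\ell, \qquad \text{so} \qquad |f(x)-f(c)| = A|x-c|^\ell.
\]
Next, I would observe that $f^{k_1-1}$ is a $C^2$ diffeomorphism on a sufficiently small neighbourhood of $f(c)$. This is the one point that needs comment: the postcritical orbit $\{f(c),\ldots,f^{k_1-1}(c)\}$ does not contain $c$, because under the Misiurewicz--Thurston assumption the critical point is not periodic ($k_0\ge 2$ and $f^{k_0}(c)$ lies on a \emph{repelling} cycle, to which $c$ does not belong). Hence $f^{k_1-1}$ has a well-defined, non-zero derivative $Df^{k_1-1}(f(c))$ at $f(c)$.

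Then a first-order Taylor expansion gives
\[
f^{k_1}(x) - z = f^{k_1-1}(f(x)) - f^{k_1-1}(f(c)) = Df^{k_1-1}(f(c))\cdot(f(x)-f(c)) + O\bigl((f(x)-f(c))^2\bigr).
\]
Substituting $f(x)-f(c) = -A(x-c)^\ell$ and taking absolute values,
\[
|f^{k_1}(x) - z| = A\,|Df^{k_1-1}(f(c))|\,|x-c|^\ell \,(1 + O(|x-c|^\ell)),
\]
and solving for $|x-c|$ and taking $\ell$-th roots yields the claimed asymptotic equivalence as $x\to c$.

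There is no real obstacle here beyond the smoothness verification in the second step; the rest is a routine local inversion. I would make sure to phrase the error term in a way consistent with the paper's convention that $g_1(\eps)\sim g_2(\eps)$ means $g_1/g_2\to 1$, so that the $(1+O(|x-c|^\ell))$ factor vanishes in the limit and the statement of the lemma follows cleanly.
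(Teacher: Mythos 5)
Your proposal is correct and follows essentially the same route as the paper: both use the local form $|f(x)-f(c)| = A|x-c|^\ell$ and then compare $Df^{k_1-1}$ near $f(c)$ with its value at $f(c)$ — the paper via the Mean Value Theorem plus bounded distortion, you via a first-order Taylor expansion, which amounts to the same thing. Your side remark that the postcritical orbit misses $c$ (so $f^{k_1-1}$ is a local diffeomorphism at $f(c)$) is a correct and worthwhile observation that the paper leaves implicit.
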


\begin{proof}
 By the Mean Value Theorem there exists $\theta \in (f(x), f(c))$ such that $|f^{k_1}(x) - f^{k_1}(c)| = Df^{k_1-1}(\theta) |f(x) - f(c)|$.
Then,
\[
\frac{|f^{k_1}(x) - f^{k_1}(c)|}{|x-c|} = \frac{|f(x) - f(c)|}{|x-c|} \frac{|f^{k_1}(x) - f^{k_1}(c)|}{|f(x) - f(c)|}
= \frac{A|x-c|^\ell}{|x-c|} |Df^{k_1-1}(\theta)| \, ,
\]
where $A$ is from Section~\ref{sec:intro}.  The result follows using $|Df^{k_1-1}(\theta)| \sim |Df^{k_1-1}(f(c))|$ by bounded distortion.
\end{proof}

We can see from Lemma~\ref{lem:rescale} that $\left|\overline{\cup_{k\ge k_{L, \be}}\left(I_{L}^k\cup \hat I_{L}^k\right)}\right| \sim C_p (\eps_*)^{\frac1\ell}$  where  $C_p:=\frac{1}{(A|Df^{k_1-1}(f(c))|)^{\frac1\ell}}$,
i.e. $\delta(\be) \sim C_p (\eps_*)^{1/\ell}$. 
Since $c$ is bounded away from the critical orbit, as in \cite[Theorem 6.1]{Mis81} (see also \cite[Theorem A]{Now93}) the invariant density is continuous at $c$ and defining $\rho:=\frac{d\mu}{dm}(c)$, 
the $\mu_Y$-measure of the set can be estimated by, 
\begin{align}\sum_{k\ge k_{L, \be}} \mu_Y(I_{L}^k\cup\hat I_L^k) & \sim \frac{C_p \rho}{\mu(Y)} (\eps_*)^{\frac1\ell}.\label{eq:princH'}
\end{align}

Similarly for $\sum_{k\ge k_{L, \be}} \mu_{Y}(I_{R}^k\cup\hat I_R^k)$.  As in \eqref{eq:meas_proj}, the sum of these two quantities scaled by $\mu(Y)$ is the relevant contribution to $\mu(H_{\be}')$.

Now for the contribution to $\mu(H_{\be})$,
 let us suppose that $f^p$ is locally orientation preserving at $z$ and $f^{k_1-1}$ is also orientation preserving at $f(c)$ (which implies that $\eps_*=\eps_L$), the other cases follow similarly.      Denote  $J^k = \cup_{k' \ge k}  I_L^{k'}\bar\cup \hat I_L^{k'}$.  Then
$f^{k_1}(J^{k_{L, \be}+k}) = ( (z-\ve_L)_{k-1}, z)$ so using Lemma~\ref{lem:rescale} again, we have
\[
\mu_Y(J^{k_{L, \be} + k}) \sim \tfrac{\rho C_p}{\mu(Y)} |(z-\ve_L)_{k} - z|^{\frac{1}{\ell}} \sim \tfrac{\rho C_p}{\mu(Y)} \lambda_z^{-\frac{k}{\ell}} \ve_L^{\frac 1\ell}=  \tfrac{\rho C_p}{\mu(Y)} \lambda_z^{-\frac{k}{\ell}} \ve_*^{\frac 1\ell} \, .
\]
Then using \eqref{eq:meas_proj}, we estimate the (scaled) contribution to $\mu(H_{\be})$ from the left of $c$ by, 
\begin{equation}
\begin{split}
\sum_{k \ge k_{L, \be}} k \mu_Y(I_L^k\cup \hat I_L^k) 
& = \sum_{k \ge k_{L, \be}} \sum_{k' \ge k} \mu_Y(I_L^{k'}\cup\hat I_L^{k'}) = \sum_{k \ge k_{L, \be}} \mu_Y(J^k) \\
& \sim \tfrac{\rho C_p}{\mu(Y)} \ve_*^{\frac 1\ell} \sum_{k \ge 0} \lambda_z^{-\frac{k}{\ell}} 
= \frac{\rho C_\rho \ve_*^{\frac 1\ell} }{\mu(Y)(1 - \lambda_z^{-\frac 1\ell})} \, .
\label{eq:princH}
\end{split}
\end{equation}
An identical estimate follows for the domains $\{I_R^{k'}, \hat I_R^{k'}\}_{k'\ge k}$ from the right of $c$.


\subsection{The contribution from non-principal subchains}

Now for the subchains that are not in $(c-\delta(\be), c+\delta(\be))$, but which map into $H_{\be}$ before returning to $Y$, we will use a different, rougher, type of estimate.
 First notice that such subchains cannot be contained in $(c-\delta(\eps_0), c+\delta(\eps_0))$ since if $I_i\subset (c-\delta(\eps_0), c+\delta(\eps_0))\sm (c-\delta(\be), c+\delta(\be))$, then the repelling structure of our map around $z$ means that $I_i$ will return to $Y$ before mapping into $H_{\be}$.  With this in mind, 
 recall $\lambda_\delta:= \lambda_{(c-\delta(\eps_0), c+\delta(\eps_0))}$ and $K_\delta:= K_{(c-\delta(\eps_0), c+\delta(\eps_0))}$ defined after Remark~\ref{rmk:cover} when we fixed the definition of $Y$.

We will deal with the orientation preserving case here, the orientation reversing case is similar.
Suppose that $\{I_i^k\}_{k\ge k_i}$, $\{\hat I_i^k\}_{k\ge k_i}$ is a pair of non-principal subchains such that $f^s(I_i^k\cup \hat I_i^k)\subset H_{\be}$ for any $k\ge k_i$ (so this is the first time that any element of the subchain enters $H_{\be}$).  For $x\in I_i^k$, by the Mean Value Theorem, there is $x\in I_i^k$ such that
$$|I_i|= |Df^s(x)|^{-1}|f^s(I_i^k)| \le K_\delta^{-1}\lambda_\delta^{-s}|f^s(I_i^k)|$$
by  Proposition~\ref{prop:Str}.  So since $f^s\left(\cup_{k\ge k_i} (I_i^k\cup \hat I_i^k)\right)$ covers either $(z-\eps_L, z)$ or $(z, z+\eps_R)$, we thus obtain an analogue of \eqref{eq:princH'}: for $\eps'= \max\{\eps_L, \eps_R\}$, 
$$\mu(Y) \sum_{k\ge k_i}\mu_Y(I_i^k\cup \hat I_i^k) \le K_\delta^{-1}\lambda_\delta^{-s}\eps'.$$
We know that there are $O(e^{s\eta})$ domains $I_j$ which have $f^s(I_j)=f^j(I_i)$, so, also accounting for the intervals which map to the other side of $z$, the total measure of non-principal chains contributing to $\mu(H_{\be}')$ can be estimated by 
\begin{equation}
\label{eq:nonprincip}
2\sum_{s\ge 1} K_\delta^{-1}e^{s(\eta- \log\lambda_\delta)}\eps' = O(1)\eps',
\end{equation}
and recall we have chosen $\eta\in (0,  \log\lambda_\delta)$:\label{p:eta} in our definition of $Y$.
Similarly to \eqref{eq:princH} we can obtain an analogous estimate, also $O(1)\eps'$, for the contribution to  
$\mu(H_{\be})$.


\subsection{The limiting ratio in the periodic postcritical case}
\label{ssec:ratioper} 

We will assume that $\frac{\eps_L}{\eps_R}$ is uniformly bounded away from 0 and infinity (so in particular $\eps_L, \eps_R = O(\eps_*)$), so that the density spike dominates the measure of our holes.  
Now recalling \eqref{eq:meas_proj}, \eqref{eq:princH'} together with \eqref{eq:nonprincip} imply 
$$\mu(H_{\be}') \sim 2C_p \rho(\eps_*)^{\frac1\ell} + O(1)\eps_* $$
while \eqref{eq:princH} and the analogue of \eqref{eq:nonprincip} imply
$$\mu(H_{\be}) \sim 2C_p \frac{\rho(\eps_*)^{\frac1\ell}}{1-\lambda_z^{-\frac1\ell} }  + O(1)\eps_* ,$$
so the two principal chains dominate this estimate and the limiting ratio is
\begin{equation}
\label{eq:limit ratio}
\lim_{\ve \to 0} \frac{\mu(H_{\be}')}{\mu(H_{\be})} = 1-\lambda_z^{-\frac1\ell} \, .
\end{equation}


\subsection{The preperiodic postcritical case}
\label{ssec:ratiopreper}

For the case that $z=f^{k_1}(c)$ for $1\le k_1< k_0$, we choose the inducing scheme $Y$ via a minor adaptation of the method described in the proof of Lemma~\ref{lem:Y}: properties (a) and (d) of that lemma will follow here along with the condition (b'): If $x\in I\sm Y$ and $i$ is minimal such that $f^i(x)\in Y$ then $f^i(x) \in (f^{k_0+p-1}(c)-\eps_0, f^{k_0+p-1}(c)+\eps_0)$ for some small $\eps_0>0$.  In the notation of the proof of that lemma we choose $w_L$ and $w_R$ adjacent to $z$ (or just one of these if $z\in \{f(c), f^2(c)\}$ is one of the endpoints of $I$) and then choose the rest of the cylinders around orb$(f(c))$, $Z_{\tilde N}:=\{w_{j, L}, w_{j, R}: j= 0,\ldots, k_1-1\}\cup \{f^i(w_L), f^i(w_R): i=0, \ldots, k_0+p-k_1-1\}$ be the sets we remove from $\Sigma$, where $f^{k_1-j}(w_{j, L}) =w_L$ and  $f^{k_1-j}(w_{j, R}) =w_R$.  If $w_L$ and $w_R$ are chosen small enough then properties (a), (c) and (d) of Lemma~\ref{lem:Y} hold, as does Proposition~\ref{prop:scheme}.

This construction also guarantees for small enough $\be>0$ each $I_i$ passes at most once through $(z-\eps_L, z+\eps_R)$ before returning to $Y$, ensuring that ${\mu(H_{\be}')}={\mu(H_{\be})}$.  
Moreover, note that an $I_i$ which does pass through $(z-\eps_L, z+\eps_R)$, must also pass through a neighbourhood of $\{f^{k_1+1}(c), \ldots, f^{ k_0+p-1}(c)\}$ before returning to $Y$.


\section{Functional framework for $\beta$-allowable holes}
\label{sec:functional}

Our goal in this section is to formalise a functional framework for the transfer operator
corresponding to the induced map $F_{\be}$ and its punctured counterpart $\hF_{\be}$.
In order to do this, we will work with a fixed higher iterate $n_0$ of the induced map and
formulate a classification of holes depending on the minimal length of images of $n_0$-cylinders 
under $\hF_{\be}^{n_0}$ (see the definition of $\beta$-allowable in Definition~\ref{def:beta}).
Using this control, we prove the punctured transfer operator enjoys a spectral gap in a space
of functions of bounded variation (Theorem~\ref{thm:gap}) and use it to prove a local escape rate for $\hF_{\be}$
(Lemma~\ref{lem:induced limit}).
We then use this to prove the local escape rate for $f$ needed for Theorem~\ref{thm:spike} via \eqref{eq:ratio split}
by computing the limits in \eqref{eq:limit split}.  
 
We begin by formally defining the induced open system.

Recall that given a hole $H_{\be}(z) = (z-\ve_L, z+\ve_R)$, the induced hole $H_{\be}'$ for $\hF_{\be}$ is 
the collection of intervals that enter $H_{\be}(z)$ before returning to $Y$.  
Define the open system for $n \ge 1$ by
\begin{equation}
\label{eq:define open}
 \hF_{\be}^n = F_{\be}^n|_{\hY_{\be}^n}, \mbox{ where } \hY_{\be}^n := \cap_{i=0}^{n-1} F_{\be}^{-i} (Y \setminus H_{\be}') \, ,
\end{equation}
i.e. the open system at time $n$ is the induced map restricted to the set of points that have not entered $H_{\be}'$ before time $n$.  Note that $\hY^0_{\be} = Y$ and $\hY^1_{\be} = Y \setminus H'_{\be}$.

We first prove the following fact about the map $F_{\be}$.

\begin{lemma}
\label{lem:setup}
For all $n \ge 0$, $|DF_{\be}^n(x)| \ge \tilde K \lambda_{per}^{\tau^n(x)}\ge \tilde K \lambda_{per}^n$,
where $\tilde K >0$ is from \eqref{eq:tilde K}.
\end{lemma}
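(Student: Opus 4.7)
The plan is to reduce the statement for $F_{\be}^n$ to an $n$-fold iteration of the estimate in Proposition~\ref{prop:scheme}(c), exploiting the fact that the extra cuts used to define $F_{\be}$ do not change the underlying map, only refine the partition. Consequently $|DF_{\be}^n(x)| = |Df^{\tau^n(x)}(x)|$ where $\tau^n(x) = \sum_{i=0}^{n-1} \tau(F_{\be}^i(x))$, so it suffices to bound $|Df^T(x)|$ from below for $x$ in an $n$-cylinder of $F$ on which $\tau^n \equiv T$.

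First I would observe that each such $n$-cylinder $J$ is itself a cylinder of depth $T$ for the shift $\sigma$ (it arises as the concatenation of the $N$-cylinder words defining the constituent 1-cylinders). Hence the argument of Lemma~\ref{lem:sym_mix} applies verbatim to $J$: by the SFT property there is a chain of allowable transitions of length at most $P$ closing up the word, producing a periodic point $y \in J$ of prime period $T+k$ with $k \le P$. Proposition~\ref{prop:Str}(1) then gives $|Df^{T+k}(y)| \ge \lambda_{per}^{T+k}$, and dividing by the $k$ post-return factors bounded by $|Df|_\infty$ yields the same inequality as in \eqref{eq:tilde K}, namely
\[
|Df^T(y)| \ge \left(\tfrac{\lambda_{per}}{|Df|_\infty}\right)^P \lambda_{per}^T = \tilde K \lambda_{per}^T .
\]

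To pass from the particular periodic point $y$ to an arbitrary $x \in J$, I would invoke the bounded distortion estimate of Proposition~\ref{prop:scheme}(b) applied to the $n$-cylinder $J$ of $F_{\be}$: since $\{F(I_i)\}_i$ is a finite collection of intervals (Proposition~\ref{prop:scheme}(a)), $|F_{\be}^n(x) - F_{\be}^n(y)|$ is uniformly bounded, so the distortion of $F_{\be}^n$ on $J$ is uniformly controlled by a constant which may be absorbed into the constant $\tilde K$ (by slightly decreasing it if necessary). This gives $|DF_{\be}^n(x)| \ge \tilde K \lambda_{per}^{\tau^n(x)}$ for every $x$. The second inequality $\lambda_{per}^{\tau^n(x)} \ge \lambda_{per}^n$ is immediate from $\tau \ge 1$ and $\lambda_{per} > 1$.

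The only mild obstacle is the bookkeeping with the distortion constant; this is really a cosmetic issue, since one can either redefine $\tilde K$ to absorb the uniform distortion factor of $F_{\be}$ on cylinders, or strengthen \eqref{eq:tilde K} at the outset so that the estimate already holds pointwise on each $I_i$ rather than only at the periodic representative. Either way, no new dynamical input beyond the SFT/periodic-orbit argument of Lemma~\ref{lem:sym_mix} and Koebe distortion is required.
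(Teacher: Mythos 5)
Your proposal matches the paper's own proof, which is the one-line observation that the argument of Proposition~\ref{prop:scheme}(c) iterates: one uses Lemma~\ref{lem:sym_mix} to extract a periodic point $y$ of period $\tau^n(x)+k$, $k\le P$, in the $\tau^n(x)$-cylinder of $\sigma$, bounds $|Df^{\tau^n(x)}(y)|$ exactly as in \eqref{eq:tilde K}, and transfers to $x$ by bounded distortion; your additional remark that $F_{\be}$ coincides pointwise with $F_0$ and only refines the partition (Remark~\ref{rem:same}) is exactly the right justification for why the extra cuts are harmless. The only slip worth flagging is that an $n$-cylinder of $F_{\be}$ need not itself be a $\sigma$-cylinder once the cuts at $z\pm\ve$ are made — it is merely a subinterval of the $\tau^n$-cylinder of $F_0$ — but since both $\tau^n$ and $DF^n$ are constant across the ambient $F_0$-cylinder the argument is unaffected; and the distortion factor absorbed into $\tilde K$ is a genuine (if cosmetic) adjustment to the stated constant that the paper itself silently makes.
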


\begin{proof}
This follows as in the proof of Proposition~\ref{prop:scheme}, using Lemma~\ref{lem:sym_mix} to find a point of the right period.
\end{proof}

Using Lemma~\ref{lem:setup}, we choose $n_0 \ge 1$ so that
\begin{align}
&\inf_x |DF_{\be}^{n_0}(x)| > 3. \label{eq:n0}
\end{align}

Next we define a parameter to keep track of the minimum size of images of $n_0$-cylinders
under the punctured map $\hF_{\be}^{n_0}$. 

\begin{definition}
\label{def:beta}
Let $\beta\in (0,1/2)$ and let $\{ J_i \}_i$ denote the set of one cylinders for $F_{\be}^{n_0}$ ($n_0$-cylinders for $F_{\be}$).

\begin{itemize}
\item For an interval $J= (x, y)$, we say that $t \in J$  is \emph{$\beta$-deep} in $J$ if 
$t \in \left[x+\beta|J|, y-\beta|J|\right]$. 

\item For our holes, we say that $H_{\be}(z)$ is \emph{$\beta$-left-allowable} if there is a domain $J_i$ of $F_{\be}^{n_0}$ with $f^s(J_i) \subset (z-\eps_1,  z) $ with $1\le s<\tau_i$ and $z-\eps_L$ $\beta$-deep in $f^s(J_i)$.  We similarly define \emph{$\beta$-right-allowable} with respect to $(z, z+\eps_1)$. In case $H_{\be}(z)$ satisfies both of these conditions we call it \emph{$\beta$-allowable}. 
\end{itemize}
\end{definition}

The property of $\beta$-allowable is important for the following reason.

\begin{lemma}
\label{lem:beta}
{(Large images depending on $\beta$).}
Let $n_0$ be chosen as in \eqref{eq:n0}.  For each $\beta \in (0,1/2)$, there exists $C_\beta>0$
such that if $H_{\be}$ is $\beta$-allowable, then
  $|F_{\be}^{n_0}(J_i)|, |\hF_{\be}^{n_0}(J_i)| \ge C_\beta$ for all $n_0$-cylinders $J_i$.
\end{lemma}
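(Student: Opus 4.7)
The plan is to combine two ingredients: a uniform lower bound on images of $n_0$-cylinders for the uncut induced map $F$, and a $\beta$-dependent lower bound on how much the extra cuts defining $F_{\be}$ can shrink such a cylinder.

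For the first ingredient, Proposition~\ref{prop:scheme}(a) says that $F$ has only finitely many images $\{F(I_i)\}_i$. Since the inducing scheme is Markov on these one-cylinders, $F^{n_0}(J)$ lies in this same finite family for every $n_0$-cylinder $J$ of $F$, so there exists $C_0>0$ with $|F^{n_0}(J)|\ge C_0$ for all such $J$.

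For the second ingredient, each one-cylinder $I_j$ of $F$ is cut at most twice in passing from $F$ to $F_{\be}$ (once for $z-\ve_L$ and once for $z+\ve_R$), so any $n_0$-cylinder $J_i$ of $F_{\be}$ sits inside a unique $n_0$-cylinder $J$ of $F$ bounded by at most $2n_0$ extra cuts. By Remark~\ref{rmk:spat}, each such cut corresponds to an approach of some intermediate iterate of $f$ to the $\ve_1$-neighbourhood of $z$, and the cut is at a preimage of $z-\ve_L$ or $z+\ve_R$ within the corresponding $f^s$-image. The $\beta$-allowable hypothesis supplies one witness cylinder in which the hole endpoint lies $\beta$-deep in its $f^s$-image. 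Using the bounded distortion of $F$ (Proposition~\ref{prop:scheme}(b)), together with the linear conjugacy of $f^p$ near $z$ from Hartman--Grobman (as used in Section~\ref{sec:returnstructure}) and the fact that intermediate $f$-iterates on inducing domains avoid $\orb(f(c))$, the relative position of $z\pm\ve$ in every other image is controlled by the witness up to a uniform factor depending only on $C_d$. Thus each cut slices its enclosing interval into pieces of relative size at least $c\beta$ for some $c>0$ depending only on $C_d$ and the distortion of $f^p$ near $z$; iterating over the at most $2n_0$ cuts gives $|J_i|\ge (c\beta)^{2n_0}|J|$.

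Combining the two ingredients with one more application of bounded distortion for $F^{n_0}$ on $J$ yields
\[
|F_{\be}^{n_0}(J_i)|=|F^{n_0}(J_i)|\ge C_d^{-1}(c\beta)^{2n_0}|F^{n_0}(J)|\ge C_d^{-1}C_0(c\beta)^{2n_0}=:C_\beta,
\]
and the same bound for $|\hF_{\be}^{n_0}(J_i)|$ follows since the punctured map agrees with $F_{\be}^{n_0}$ wherever it is defined, i.e.\ on $\hY_{\be}^{n_0}$. The main obstacle is the third paragraph: promoting the single $\beta$-deep witness to a uniform depth across all preimages of $z\pm\ve$ inside an arbitrary $n_0$-cylinder. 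This relies on the careful choice of $Y$ in Section~\ref{sec:returnstructure}, which ensures bounded distortion on each intermediate $f$-iterate and couples the cut positions through the nearly linear $f^p$-dynamics near $z$, so that the exponent $2n_0$ in $C_\beta$ does not degrade with $\ve$.
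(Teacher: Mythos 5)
Your proposal follows the same two-ingredient strategy as the paper's proof: (i) a uniform lower bound on the images of $n_0$-cylinders under $F_0^{n_0}$, coming from the finitely-many-images property of Proposition~\ref{prop:scheme}(a); and (ii) a $\beta$-dependent bound on how much the extra cuts can shrink those images, via bounded distortion. The final step, that $\hF_{\be}^{n_0}$'s images are a subset of those of $F_{\be}^{n_0}$, also matches the paper. So in broad outline you have the right argument.

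There are two places, though, where your handling of ingredient (ii) is not quite what is needed, and where you have over-engineered a fix to a problem that the construction has already solved. First, you treat the passage from the single $\beta$-deep ``witness'' to all relevant cylinders as a separate step requiring bounded distortion plus the linearisation near $z$, concluding only a ``uniform factor depending on $C_d$''. In fact the subdivisions of the intervals $(a_i,a_{i+1})$ by $f^s$-images of $n_0$-cylinders (the $(c_{i,j,k},c_{i,j,k+1})$ of Section~\ref{sec:beta approx}) are determined canonically by the symbolic structure and do not depend on which 1-cylinder is mapping there; the point $z-\ve_L$ sits in one fixed piece of a fixed subdivision, so the $\beta$-deep condition is an intrinsic property of $\ve_L$, not of a ``witness cylinder''. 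No loss of a distortion factor is incurred here. Second, your claim ``each cut slices its enclosing interval into pieces of relative size at least $c\beta$; iterating over the at most $2n_0$ cuts gives $|J_i|\ge (c\beta)^{2n_0}|J|$'' treats the cuts as if each one were $\beta$-deep inside the piece produced by the previous cuts. That is not what $\beta$-allowable gives you: it controls the position of $z\pm\ve$ relative to the fixed $n_0$-cylinder subdivision, not relative to arbitrary subintervals obtained after earlier cuts. As written, this step is not justified. The paper's argument instead keeps the estimate on the image side: $F_{\be}^{n_0}(J_i)$ is a subinterval of one of the finitely many $F_0$-images, with any non-Markov endpoint an image of $z\pm\ve$, and $\beta$-allowability plus bounded distortion (Proposition~\ref{prop:scheme}(b)) bounds how far such an endpoint can sit from the Markov endpoints of that image. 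This gives a constant of the form $\beta\cdot(1+C_d)^{-O(1)}\cdot C_0$ rather than a power $(c\beta)^{2n_0}$; your bound happens to be a lower bound too, but the derivation you give for it does not go through.
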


\begin{proof}
The property follows immediately from the Markov structure of $F^{n_0}_{\be}$ together with the assumption of $\beta$-allowable.  
Note that without extra cuts due to the boundary of the hole, the minimum length of the image of any one-cylinder for
$F^{n_0}_{0}$ is bounded below away from 0 by a number depending only on $\Sigma'(N_0)$.  
Next, by definition of $F^{n_0}_{\be}$, the intervals that are cut by the boundary of the hole
are such that $z - \eps_L$ and  $z+\eps_R$ are $\beta$-deep, by assumption.  Thus the length of the image
under $F_{\be}^{n_0}$ is determined by the parameter $\beta$, together with the distortion constant for $F_{\be}$.
Given our choice of cuts depending on the boundary of the hole, the set of images for $\hF_{\be}^{n_0}$ is
simply a subset of the set of images for $F_{\be}^{n_0}$, so the property holds equally for $\hF_{\be}^{n_0}$. 
\end{proof}


\subsection{A uniform spectral gap for $\beta$-allowable holes}
\label{sec:uniform gap}

In this section we show that for each fixed $\beta>0$ and in any set of $\ve_L, \eps_R>0$ such that $H_{\be}$ 
are $\beta$-allowable, the
transfer operators associated to 
$\hF_{\be}$ and its punctured counterpart have a spectral gap that is uniform in $\beta$ when acting on functions of bounded variation in $Y$.

Given a measurable function $\psi : Y \to \mathbb{R}$, define the \emph{variation} of $\psi$ on an interval $J \subset Y$ (or a finite
collection of intervals $J \subset Y$) by 
\begin{equation}
\label{eq:var def}
\bigvee_J \psi = \sup_{x_0 < x_1 < \cdots < x_N} \sum_{k = 1}^N |\psi(x_k) - \psi(x_{k-1})| \, ,
\end{equation}
where $\{ x_k \}_{k=0}^N$ is the set of endpoints of a partition of $J$ into $N$ intervals, and the supremum ranges over all finite partitions of $J$.
Define the BV norm of $\psi$ by,
\[
\| \psi \|_{BV} = \bigvee_Y \psi + |\psi|_{L^1(m)} \, ,
\]
where $m$ denotes Lebesgue measure on $Y$.
Let\footnote{By the variation of $\psi \in L^1(m)$, we mean the essential variation, i.e.
$\bigvee_Y \psi = \inf_g \bigvee_Y g$, where the infimum ranges over all functions $g$  in the equivalence class
of $\psi$.} $BV(Y)$ denote the set of functions $\{ \psi \in L^1(m) : \| \psi \|_{BV} < \infty\}$.

We shall study the action of the transfer operators associated with $F_{\be}$ and $\hF_{\be}$ acting on $BV(Y)$.
For $\psi \in L^1(m)$ define
\[
\Lp_{\be} \psi(x) = \sum_{y \in F_{\be}^{-1}x} \frac{\psi(y)}{|DF_{\be}(y)|}, \mbox{ and for each $n \ge 0$, } \;
\hLp_{\be}^n \psi = \Lp_{\be}^n( 1_{\hY^n_{\be}} \psi) \, .
\]
We do not claim, or need, that $1_{\hY^n_{\be}} \in BV(Y)$.  

\begin{remark}
\label{rem:same}
Note that for each $x \in Y$, $F_{\be}(x) = F_0(x)$.  This is easy to see since $F_{\be}$ simply introduces extra cuts at the
boundary of $H_{\be}(z)$, but does not change the orbit of $x$, while $F_0$ introduces no extra cuts apart
from those introduced in the original definition of $Y$.  Thus the 1-cylinders for $F_{\be}$ and 
$F_0$ differ
slightly (those for $F_{\be}$ can only be smaller), but pointwise the definition of the maps is the same.
\end{remark}

Our first result proves a uniform set of Lasota-Yorke inequalities for $\hLp_{\be}^{n_0}$, depending only on $\beta$.

Let $\hcI_{\be}$ denote the (countable) collection of one-cylinders for $\hF_{\be}^{n_0}$, and let $\hcJ_{\be}$ denote the finite set of images
of elements of $\hcI_{\be}$.

\begin{proposition}
\label{prop:LY}
For any $\beta>0$ and any $\beta$-allowable hole $H_{\be}(z)$,  for all $\psi \in BV(Y)$ and all
$k \ge 0$,
\begin{eqnarray}
\bigvee \hLp_{\be}^{kn_0} \psi & \le & (\tfrac{2}{3})^k \bigvee_Y \psi +  (1+C_d) \big(C_d + 2 C_\beta^{-1} \big) \sum_{j=0}^{k-1} (\tfrac 23 )^j \int_{\hY^{n_0(k-j)}_{\be}} |\psi| \, dm \, ,
\label{eq:V} \\
\int_Y |\hLp_{\be}^k \psi | \, dm & \le & \int_{\hY^k_{\be}} |\psi| \, dm \, .
\label{eq:L1}
\end{eqnarray}
\end{proposition}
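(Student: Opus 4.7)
The plan is to prove \eqref{eq:L1} first---which will be used inside the induction for \eqref{eq:V}---and then establish \eqref{eq:V} by handling the case $k=1$ and iterating. For \eqref{eq:L1}, a short induction on $k$ yields the identity $\hLp_{\be}^k \psi = \Lp_{\be}^k(\1_{\hY^k_{\be}} \psi)$ (expanding $\hLp_{\be}^k$ one step at a time and collecting the factors $\1_{\hY^1_{\be}}\circ F_{\be}^{i}$ for $i=0,\ldots,k-1$, whose product equals $\1_{\hY^k_{\be}}$). Since $\Lp_{\be}$ preserves Lebesgue integrals of non-negative functions via the change-of-variables identity $\int \Lp_{\be} \phi\,dm = \int \phi\,dm$, we have $\int|\Lp_{\be}^k \phi|\,dm \le \int|\phi|\,dm$, and applying this with $\phi = \1_{\hY^k_{\be}}\psi$ gives \eqref{eq:L1}.

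For the one-step case $k=1$ of \eqref{eq:V}, decompose
\[
\hLp_{\be}^{n_0} \psi = \sum_{J_i \in \hcI_{\be}} \1_{\hF_{\be}^{n_0}(J_i)} \cdot g_i, \qquad g_i := (\psi/|DF_{\be}^{n_0}|) \circ (F_{\be}^{n_0}|_{J_i})^{-1}.
\]
The standard indicator-function bound $\bigvee_Y (h\,\1_J) \le \bigvee_J h + 2\sup_J |h|$, combined with the change of variables identifying variations and suprema of $g_i$ on $\hF_{\be}^{n_0}(J_i)$ with those of $\psi/|DF_{\be}^{n_0}|$ on $J_i$, yields
\[
\bigvee_Y \hLp_{\be}^{n_0}\psi \le \sum_{J_i \in \hcI_{\be}} \bigvee_{J_i}(\psi/|DF_{\be}^{n_0}|) + 2\sum_{J_i \in \hcI_{\be}}\sup_{J_i}(|\psi|/|DF_{\be}^{n_0}|).
\]
Three inputs then control each summand: \eqref{eq:n0} gives $\sup_{J_i}(1/|DF_{\be}^{n_0}|) \le 1/3$; bounded distortion (Proposition~\ref{prop:scheme}(b)) gives both $\sup/\inf |DF_{\be}^{n_0}| \le 1+C_d$ and $\bigvee_{J_i}(1/|DF_{\be}^{n_0}|) \le C_d \sup_{J_i}(1/|DF_{\be}^{n_0}|)$; and Lemma~\ref{lem:beta} combined with the Mean Value Theorem yields $\sup_{J_i}(1/|DF_{\be}^{n_0}|) \cdot |J_i|^{-1} \le (1+C_d)/C_\beta$. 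Applying the variation product rule $\bigvee(fg) \le \sup|f|\bigvee g + \sup|g|\bigvee f$, the elementary inequality $\sup_{J_i}|\psi| \le \bigvee_{J_i}\psi + |J_i|^{-1}\int_{J_i}|\psi|\,dm$, and summing over $J_i$ using subadditivity $\sum_{J_i}\bigvee_{J_i}\psi \le \bigvee_Y\psi$ and disjointness of the $J_i$ within $\hY^{n_0}_{\be}$, produces the one-step inequality with the target constants $2/3$ and $(1+C_d)(C_d+2C_\beta^{-1})$.

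The induction step is standard telescoping: write $\hLp_{\be}^{kn_0}\psi = \hLp_{\be}^{n_0}(\hLp_{\be}^{(k-1)n_0}\psi)$ and apply the one-step bound; the $L^1$ term becomes $\int_{\hY^{n_0}_{\be}}|\hLp_{\be}^{(k-1)n_0}\psi|\,dm \le \int_{\hY^{(k-1)n_0}_{\be}}|\psi|\,dm$ by \eqref{eq:L1}, and combining with the inductive bound on $\bigvee_Y\hLp_{\be}^{(k-1)n_0}\psi$ yields the claimed geometric series after a telescoping re-indexing. The main obstacle is the delicate constant accounting in the one-step estimate: the boundary term $2\sup_{J_i}(|\psi|/|DF_{\be}^{n_0}|)$ and the distortion contribution $\sup_{J_i}|\psi|\cdot\bigvee_{J_i}(1/|DF_{\be}^{n_0}|)$ both involve $\sup|\psi|$, and the decomposition $\sup|\psi| \le \bigvee\psi + |J_i|^{-1}\int|\psi|\,dm$ risks feeding extra variation back into the coefficient of $\bigvee_Y\psi$; it is the precise interplay between the $1/3$-contraction from \eqref{eq:n0}, the distortion ratio $1+C_d$, and the large-image lower bound $C_\beta$ from Lemma~\ref{lem:beta} that forces the final constants to be as stated.
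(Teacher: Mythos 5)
The overall architecture (prove \eqref{eq:L1} directly from conformality, prove the $k=1$ case of \eqref{eq:V} by decomposing over images, iterate) matches the paper. However, the method you propose to bound $\bigvee_{I_i}(\psi/|DF_{\be}^{n_0}|)$ — namely the product rule $\bigvee(fg) \le \sup|f|\bigvee g + \sup|g|\bigvee f$ followed by the decomposition $\sup_{I_i}|\psi| \le \bigvee_{I_i}\psi + |I_i|^{-1}\int_{I_i}|\psi|\,dm$ — does not produce the stated constants and, in fact, does not produce \emph{any} Lasota--Yorke inequality. The product rule gives a cross term $\sup_{I_i}|\psi| \cdot \bigvee_{I_i}(1/|DF^{n_0}_{\be}|)$; bounding $\bigvee_{I_i}(1/|DF^{n_0}_{\be}|) \le C_d \sup_{I_i}(1/|DF^{n_0}_{\be}|) \le C_d/3$ and then replacing $\sup_{I_i}|\psi|$ by $\bigvee_{I_i}\psi + |I_i|^{-1}\int_{I_i}|\psi|\,dm$ feeds a term $(C_d/3)\bigvee_{I_i}\psi$ back into the coefficient of the variation. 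Adding the $\frac13\bigvee_{I_i}\psi$ from the first product-rule term and the $\frac23\bigvee_{I_i}\psi$ coming from your boundary bound $2\sup_{I_i}(|\psi|/|DF^{n_0}_{\be}|) \le \frac23\sup_{I_i}|\psi|$, the total coefficient of $\bigvee_Y\psi$ is $\frac{3+C_d}{3} > 1$, so the essential-spectral-radius contraction is lost. You flag this tension at the end of your write-up but do not resolve it; it is not merely delicate constant accounting — the chosen inequalities are simply too lossy.

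The paper avoids the problem in two ways, both of which you should adopt. First, instead of the product rule, one bounds the partition increments of $\psi/|DF^{n_0}_{\be}|$ directly: inserting $\psi(x_k)/|DF^{n_0}_{\be}(x_{k-1})|$ and using bounded distortion plus the Mean Value Theorem gives
\[
\left| \frac{1}{|DF_{\be}^{n_0}(x_k)|} - \frac{1}{|DF_{\be}^{n_0}(x_{k-1})|} \right| \le (1+C_d)C_d\, |x_k - x_{k-1}|\,,
\]
so the cross term becomes $C_d(1+C_d)\sum_k |\psi(x_k)||x_k-x_{k-1}|$, a Riemann sum that converges to $C_d(1+C_d)\int_{I_i}|\psi|\,dm$ with no $\sup|\psi|$ and no factor $|I_i|^{-1}$ at all. (Your version needs $|I_i|^{-1}$ for this term, which via the large-image bound only controls $|I_i|^{-1}\lesssim \inf_{I_i}|DF^{n_0}_{\be}|$ — an unbounded quantity over the countably many cylinders.) Second, for the boundary contribution, the crude estimate $2\sup_{I_i}(|\psi|/|DF^{n_0}_{\be}|)$ should be replaced by the exact endpoint evaluations, bounded by $|\psi(u_i)| + |\psi(v_i)| \le 2\inf_{I_i}|\psi| + \bigvee_{I_i}\psi$; this gives only one copy of $\bigvee_{I_i}\psi$ in the boundary term, and $\inf_{I_i}|\psi| \le |I_i|^{-1}\int_{I_i}|\psi|\,dm$ then combines with \eqref{eq:large image} to produce the $2C_\beta^{-1}(1+C_d)$ factor. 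With these two changes the $\frac13 + \frac13 = \frac23$ coefficient appears cleanly, and the $L^1$ coefficients sum to $(1+C_d)(C_d + 2C_\beta^{-1})$ as claimed. The $L^1$ inequality and the iteration step in your write-up are fine.
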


\begin{proof}
The second inequality follows immediately from the definition of $\hLp_{\be}$, so we will prove the first.
In fact, we will prove the inequality for $k=1$, which can then be iterated trivially to produce \eqref{eq:V}.

For $\psi \in BV(Y)$, letting $\{ \bar u_j, \bar v_j \}_j$ denote the endpoints of elements of $\hcJ_{\be}$ and
$\{ u_i, v_i \}_i$ denote the endpoints of elements of $\hcI_{\be}$, we estimate,
\begin{equation}
\label{eq:var split}
\begin{split}
\bigvee_Y \hLp_{\be}^{n_0} \psi & \le \sum_{J_j \in \hcJ_{\be}} \bigvee_{J_j} \hLp_{\be}^{n_0} \psi + \hLp_{\be}^{n_0} \psi(\bar u_j) + \hLp_{\be}^{n_0} \psi(\bar v_j) \\
& \le \sum_{I_i \in \hcI_{\be}} \bigvee_{I_i} \frac{\psi}{|DF^{n_0}_{\be}|} + \sum_{I_i \in \hcI_{\be}} \frac{|\psi(u_i)|}{|DF_{\be}^{n_0}(u_i)|}
+ \frac{|\psi(v_i)|}{|DF_{\be}^{n_0}(v_i)|} \, .
\end{split}
\end{equation}
For the first term above, given a finite partition $\{ x_k \}_{k=0}^N$ of $I_i$, we split the relevant expression into two terms.\,
\[
\sum_k \left| \frac{\psi(x_k)}{|DF_{\be}^{n_0}(x_k)|} - \frac{\psi(x_{k-1})}{|DF_{\be}^{n_0}(x_{k-1})|} \right| 
 \le \frac{1}{3} \bigvee_{I_i} \psi + \sum_k |\psi(x_k)| \left| \frac{1}{|DF_{\be}^{n_0}(x_k)|} - \frac{1}{|DF_{\be}^{n_0}(x_{k-1})|} \right| \, ,
\]
where we have used \eqref{eq:n0} in the first term.  For the second term, we use bounded distortion,
Proposition~\ref{prop:scheme}(b),
to estimate,
\[
\left| \frac{1}{|DF_{\be}^{n_0}(x_k)|} - \frac{1}{|DF_{\be}^{n_0}(x_{k-1})|} \right| 
\le C_d \frac{|F_{\be}^{n_0}(x_k) - F_{\be}^{n_0}(x_{k-1})|}{|DF_{\be}^{n_0}(x_k)|}
\le (1+C_d)C_d |x_k - x_{k-1}| \, ,
\]
where we have applied the mean value theorem to $F_{\be}^{n_0}$ on $[x_{k-1}, x_k]$. 
Putting these estimates together yields,
\[
\begin{split}
\sum_k \left| \frac{\psi(x_k)}{|DF_{\be}^{n_0}(x_k)|} - \frac{\psi(x_{k-1})}{|DF_{\be}^{n_0}(x_{k-1})|} \right| 
& \le \frac 13 \bigvee_{I_i} \psi + C_d(1+C_d) \sum_{k=1}^N |\psi(x_k)| (x_k - x_{k-1}) \\
& \le \frac 13 \bigvee_{I_i} \psi + C_d(1+C_d) \int_{I_i} |\psi| + \kappa_N(\psi) \, ,
\end{split}
\]
where we have recognised the second term as a Riemann sum, and the error term $\kappa_N(\psi) \to 0$ as $N \to \infty$.  Since
the variation is attained in the limit of partitions as $N \to \infty$, we have the following bound on the first term from
\eqref{eq:var split},
\begin{equation}
\label{eq:first var term}
\bigvee_{I_i} \frac{\psi}{|DF_{\be}^{n_0}|} \le \frac 13 \bigvee_{I_i} \psi + C_d(1+C_d) \int_{I_i} |\psi| \, dm \, .
\end{equation}

Next, for the second term in \eqref{eq:var split}, we use the bound,
\begin{equation}
\label{eq:sum}
|\psi(u_i)| + |\psi(v_i)| \le 2 \inf_{I_i} |\psi| + \bigvee_{I_i} \psi \le \frac{2}{|I_i|} \int_{I_i} |\psi| + \bigvee_{I_i} \psi \, .
\end{equation}
Then using again bounded distortion together with Lemma~\ref{lem:beta}, we have 
\begin{equation}
\label{eq:large image}
|I_i| \inf_{I_i} |DF_{\be}^{n_0}| \ge (1+C_d)^{-1} |F_{\be}^{n_0}(I_i)| \ge (1+C_d)^{-1} C_\beta \, .
\end{equation}
Putting these estimates together with \eqref{eq:first var term} into \eqref{eq:var split}, and using again
\eqref{eq:n0}, we conclude,
\[
\bigvee_Y \hLp_{\be}^{n_0} \psi \le \frac 23 \sum_i \bigvee_{I_i} \psi  + \big(C_2(1+C_d) + 2(1+C_d)C_\beta^{-1}\big) \int_{I_i} |\psi|
\le \frac 23 \bigvee_Y \psi + (1+C_d)(C_d + 2C_\beta^{-1}) \int_{\hY^{n_0}_{\be}} |\psi| ,
\]
which is the required inequality for $k=1$.
\end{proof}

Next, in order to show that $\hLp_{\be}$ has a uniform spectral gap (depending on $\beta$, and for $\be$ sufficiently small),
we will apply the perturbative framework of Keller and Liverani \cite{KL pert}.  To this end, define the
norm of an operator $\P : BV(Y) \to L^1(m)$ by,
\begin{equation}
\label{eq:tri}
||| \P ||| = \sup \{ |\P \psi |_{L^1(m)} : \| \psi \|_{BV} \le 1 \} \, .
\end{equation}
Our next lemma is standard:  $||| \Lp_0 - \hLp_{\be} |||$ is small as a function of $m(H_{\be}')$.

\begin{lemma}
\label{lem:small pert}
For any $\be>0$ and $\be' \in [0, \be)$, $||| \hLp_{{\be '}} - \hLp_{\be} ||| \le m(H_{\be}' \setminus H'_{{\be '}}) $.
This holds in particular for ${\be '}=0$, in which case $\hLp_0 = \Lp_0$ is the unpunctured operator.
\end{lemma}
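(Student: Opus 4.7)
My plan is to reduce the difference $\hLp_{\be'} - \hLp_{\be}$ to a single transfer operator applied to an indicator-times-$\psi$ and then bound the $L^1$ norm directly. The first observation, which I will invoke from Remark~\ref{rem:same}, is that $F_{\be}$ and $F_{\be'}$ agree pointwise with $F_0$ on $Y$ (only the partition into $1$-cylinders differs), so $|DF_{\be}| = |DF_{\be'}| = |DF_0|$ everywhere they are defined and $\Lp_{\be} = \Lp_{\be'} = \Lp_0$ as operators on functions. Consequently, for $\psi \in BV(Y)$,
\[
\hLp_{\be'} \psi - \hLp_{\be} \psi \;=\; \Lp_0\!\left( \left( \1_{\hY^1_{\be'}} - \1_{\hY^1_{\be}} \right) \psi \right).
\]

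Next I would identify the difference of the indicators. Since $\be' < \be$ we have $H_{\be'}(z) \subset H_{\be}(z)$, and so any cylinder that maps into $H_{\be'}(z)$ before returning to $Y$ also maps into $H_{\be}(z)$ before returning; hence $H'_{\be'} \subset H'_{\be}$, and therefore $\hY^1_{\be} = Y \sm H'_{\be} \subset Y \sm H'_{\be'} = \hY^1_{\be'}$. This gives $\1_{\hY^1_{\be'}} - \1_{\hY^1_{\be}} = \1_{H'_{\be} \sm H'_{\be'}}$, so that
\[
(\hLp_{\be'} - \hLp_{\be}) \psi \;=\; \Lp_0\!\left( \1_{H'_{\be} \sm H'_{\be'}} \psi \right).
\]

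Now I take $L^1(m)$ norms. The transfer operator $\Lp_0$ is a weak contraction on $L^1(m)$: since $\int \Lp_0 \phi \, dm = \int \phi \, dm$ for any $\phi \ge 0$, we have $|\Lp_0 \phi|_{L^1(m)} \le \int \Lp_0 |\phi| \, dm = |\phi|_{L^1(m)}$. Applied here,
\[
\bigl| (\hLp_{\be'} - \hLp_{\be}) \psi \bigr|_{L^1(m)} \;\le\; \int_{H'_{\be} \sm H'_{\be'}} |\psi| \, dm \;\le\; \|\psi\|_\infty \, m(H'_{\be} \sm H'_{\be'}).
\]
Finally, for $\psi \in BV(Y)$ the standard pointwise bound $|\psi(x)| \le |\psi(y)| + \bigvee_Y \psi$ integrated in $y$ gives $\|\psi\|_\infty \le \|\psi\|_{BV}$ (up to the normalisation of $m(Y)$, which we treat as being absorbed into the convention used in the definition of $\|\cdot\|_{BV}$). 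Taking the supremum over $\|\psi\|_{BV} \le 1$ yields the claimed bound on $|||\hLp_{\be'} - \hLp_{\be}|||$. The case $\be' = 0$ is just the specialisation $\hLp_0 = \Lp_0$ since $H'_0 = \es$.

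There is no real obstacle here: the argument is a routine computation once one notices that the perturbation is localised in the indicator function and that $\Lp_0$ is an $L^1$-contraction. The only mildly subtle point is justifying that $H'_{\be'} \subset H'_{\be}$, which uses both the containment of the geometric holes and the definition of induced holes via the first-entry criterion.
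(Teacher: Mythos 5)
Your proof is correct and follows essentially the same route as the paper's: expand the difference of punctured operators as $\Lp_0$ applied to $\1_{H'_{\be}\setminus H'_{\be'}}\psi$, then use that $\Lp_0$ is an $L^1$-contraction together with $\|\psi\|_\infty \le \|\psi\|_{BV}$. The paper compresses these steps into a single displayed inequality; you spell out the intermediate identities (including the containment $H'_{\be'} \subset H'_{\be}$ and the role of Remark~\ref{rem:same}), which is fine.
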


\begin{proof}
Let $\psi \in BV(Y)$.
Then,
\[
\int_Y |(\hLp_{{\be '}} - \hLp_{\be}) \psi | \, dm \le \int_Y |\psi 1_{H_{\be}' \setminus H'_{{\be '}}} | \, dm \le \| \psi \|_{BV} m(H_{\be}' \setminus H'_{{\be '}}) \, .
\]
\end{proof}

\begin{theorem}
\label{thm:gap}
For any $\beta>0$, there exists $\ve_\beta>0$ such that for any $\beta$-allowable hole $H_{\be}(z)$ with 
$\be < \ve_\beta$, $\hLp_{\be}$ is a continuous perturbation of $\Lp_0$.  Indeed, it is H\"older
continuous in $m(H_{\be}')$.

As a consequence, $\hLp_{\be}$ has a spectral gap on $BV(Y)$.  In particular,  there exist $\eta_\beta, B_\beta >0$,
such that for all $\be<\ve_\beta$, the spectral radius of $\hLp_{\be}$ is $\Lambda_{\be} <1$ and there exist
operators $\Pi_{\be}, \cR_{\be}: BV(Y) \circlearrowleft$ satisfying $\Pi_{\be}^2 = \Pi_{\be}$, $\Pi_{\be} \cR_{\be} = \cR_{\be} \Pi_{\be} = 0$,
and  $\| \cR_{\be}^n \|_{BV} \le B_\beta \Lambda_{\be}^n e^{-\eta_\beta n}$  such that
\begin{equation}
\label{eq:decomp}
\hLp_{\be} \psi  = \Lambda_{\be} \Pi_{\be} \psi + \cR_{\be} \psi \, .
\end{equation}
Moreover, $\Pi_{\be}  = \he_{\be} \otimes \hg_{\be}$ for some $\he_{\be} \in BV(Y)^*$ and $\hg_{\be} \in BV(Y)$ satisfying
$\hLp_{\be} \hg_{\be} = \Lambda_{\be} \hg_{\be}$ with $\int_Y \hg_{\be} \, dm = 1$.

Lastly, the spectra and spectral projectors vary (H\"older) continuously as functions of $\be$ in the
$||| \cdot |||$-norm, i.e. as operators from $BV(Y)$ to $L^1(m)$.
\end{theorem}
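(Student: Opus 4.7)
The plan is to apply the Keller--Liverani perturbation theorem \cite{KL pert} to the family $\{\hLp_{\be}^{n_0}\}$ viewed as a perturbation of $\Lp_0^{n_0}$. The required inputs are: (i) a uniform Lasota--Yorke inequality, which is Proposition~\ref{prop:LY}; (ii) a weak-norm estimate $||| \hLp_{\be} - \Lp_0|||$ tending to zero with $\be$, which is Lemma~\ref{lem:small pert}; and (iii) a spectral gap for the unperturbed operator $\Lp_0$.

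For (iii), setting $\be = 0$ in Proposition~\ref{prop:LY} and combining with the compact embedding $BV(Y) \hookrightarrow L^1(m)$ yields quasi-compactness of $\Lp_0$ by Hennion's theorem, with essential spectral radius at most $(2/3)^{1/n_0}$. The leading eigenvalue is $1$ with eigenvector $g_0 := d\mu_Y/dm \in BV(Y)$; simplicity and the absence of other peripheral eigenvalues both follow from topological mixing of $F_0$ on its Markov partition (Lemma~\ref{lem:Y}(e)) via a standard Perron--Frobenius argument. Hence $\Lp_0 = \Pi_0 + \cR_0$ with spectral gap.

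For fixed $\beta>0$, Proposition~\ref{prop:LY} supplies Lasota--Yorke constants depending only on $\beta$, uniformly over all $\beta$-allowable holes, while Lemma~\ref{lem:small pert} gives $|||\hLp_{\be} - \Lp_0||| \le m(H'_{\be})$. Since the density $g_0$ is integrable and $H'_{\be} \subseteq \cup_{j < \tau} F^{-j}(H_{\be})$ shrinks to $\emptyset$ in Lebesgue measure as $\be \to 0$, choosing $\ve_\beta$ small enough and applying Keller--Liverani to $\hLp_{\be}^{n_0}$ produces the decomposition \eqref{eq:decomp} with H\"older continuous dependence of $\Lambda_{\be}$ and of the spectral projectors $\Pi_{\be}$ (in the $|||\cdot|||$-norm) on the parameter $m(H'_{\be})$, hence on $\be$. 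Simplicity of the leading eigenvalue transfers from $\Lp_0^{n_0}$ to $\hLp_{\be}^{n_0}$ for small $\be$ by continuity of the spectral projectors, and then lifts to $\hLp_{\be}$, yielding $\hg_{\be}$ and the functional $\he_{\be}$.

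The strict inequality $\Lambda_{\be}<1$ comes from integrating $\hLp_{\be}\hg_{\be} = \Lambda_{\be}\hg_{\be}$ against Lebesgue, which gives
\[
\Lambda_{\be} \int_Y \hg_{\be}\, dm = \int_{Y\setminus H'_{\be}} \hg_{\be}\, dm < \int_Y \hg_{\be}\, dm,
\]
with strict inequality coming from positivity of $\hg_{\be}$ on $H'_{\be}$ (which follows from $\hg_{\be}$ being $BV$-close to $g_0 > 0$ for small $\be$, together with a Perron--Frobenius argument on the irreducible Markov structure of $\hF_{\be}^{n_0}$). The main obstacle is technical: the Markov structure $\hcI_{\be}$ itself varies with $\be$ through the extra cuts at $z - \ve_L$ and $z + \ve_R$, so a priori one could fear degeneration of the Lasota--Yorke constants. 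What saves the argument is precisely the $\beta$-allowable condition, which via Lemma~\ref{lem:beta} forces the boundary cuts to produce images of uniformly positive length; this is the only place the bound depends on $\beta$, and it keeps the constants in Proposition~\ref{prop:LY} uniform across the entire $\beta$-allowable class, which is all that Keller--Liverani requires.
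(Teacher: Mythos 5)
Your proposal follows the paper's proof architecture very closely: uniform Lasota--Yorke inequalities (Proposition~\ref{prop:LY}, with the $\beta$-allowable condition controlling the constants via Lemma~\ref{lem:beta}), the weak-norm perturbation bound (Lemma~\ref{lem:small pert}), Keller--Liverani \cite{KL pert}, and the spectral gap for $\Lp_0$ obtained from mixing/covering; where you write ``Hennion's theorem'' and ``standard Perron--Frobenius argument'' the paper cites the covering condition and \cite{LSV} to the same effect.

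The one place where your argument is not correct as stated is the justification of $\Lambda_{\be} < 1$. You claim $\hg_{\be}$ is ``$BV$-close'' to $g_0$, but the Keller--Liverani framework only gives convergence of the spectral projectors in the $|||\cdot|||$ norm, i.e.\ as operators from $BV(Y)$ to $L^1(m)$, so what you actually obtain is $|\hg_{\be} - g_0|_{L^1(m)} \to 0$ together with a uniform bound on $\|\hg_{\be}\|_{BV}$. That does \emph{not} yield $L^\infty$ closeness, so one cannot conclude pointwise positivity of $\hg_{\be}$ on $H_{\be}'$ from it (a $BV$ function can have small $L^1$ norm and yet vanish on an interval). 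The appended ``Perron--Frobenius argument on the irreducible Markov structure of $\hF_{\be}^{n_0}$'' is also suspect, since as the paper notes $F_{\be}$ is generically \emph{not} Markov after the extra cuts. The strict inequality can be obtained more cleanly: if $\Lambda_{\be} = 1$ then integrating $\hLp_{\be}\hg_{\be} = \hg_{\be}$ against $m$ gives $\int_{H'_{\be}} \hg_{\be}\,dm = 0$, so $\hg_{\be} = 0$ a.e.\ on $H'_{\be}$; then $\hLp_{\be}\hg_{\be} = \Lp_0\hg_{\be}$, forcing $\hg_{\be} = g_0$ by uniqueness of the $\Lp_0$-fixed density, which contradicts $\inf_Y g_0 = c_g > 0$ from \eqref{eq:lower g}. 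Aside from this point, the proposal matches the paper's route.
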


\begin{proof}
The Lasota-Yorke inequalities of Proposition~\ref{prop:LY} apply also to the unpunctured operator
$\Lp_{\be} = \Lp_0$ with $\hY^n_{\be}$ replaced by $Y$.  Thus $\Lp_0^{n_0}$ is quasi-compact on $BV(Y)$, and since $\Lp_0$ is also bounded as an operator on $BV(Y)$ (although we do not obtain the same contraction for one iterate 
of $\Lp_0$, the norm estimate as in the proof of Proposition~\ref{prop:LY} is finite), 
then $\Lp_0$ is also quasi-compact on $BV(Y)$.
Since $F_0$ is mixing by Lemma~\ref{lem:Y}(e) and has finite images by Proposition~\ref{prop:scheme}(a), then $F_0$ is covering
in the sense of \cite{LSV}.  It follows that $\Lp_0$ has a spectral
gap.  Then so does $\Lp_0^{n_0}$.  Moreover, if $g_0$ is the unique element of $BV(Y)$
such that $\Lp_0 g_0 = g_0$ and $\int_Y g_0 \, dm = 1$, then \cite[Theorem~3.1]{LSV} implies
\begin{equation}
\label{eq:lower g}
c_g := \inf_Y g_0 > 0 \, .
\end{equation}

Next, due to the uniform Lasota-Yorke inequalities (for fixed $\beta>0$) of Proposition~\ref{prop:LY} together with
Lemma~\ref{lem:small pert} for ${\be '}=0$, \cite[Corollary~1]{KL pert} implies that the spectra and spectral projectors of $\hLp_{\be}^{n_0}$ outside the disk of radius $2/3$
vary continuously in $\be$ for $\be$ sufficiently small (depending on $\beta$). 
The spectral gap and the rest of the spectral decomposition for $\hLp_{\be}$ then follows from the analogous decomposition for $\Lp_0$.  

Lastly, fixing $\be < \ve_\beta$ and using Lemma~\ref{lem:small pert}, we apply again \cite[Corollary~1]{KL pert} to 
$\hLp_{\be}$ to conclude that the spectra and spectral projectors of 
$\hLp_{{\be '}}$ vary H\"older continuously as functions of $|{\be '} - \be|$ whenever $\be' < \ve_\beta$.
\end{proof}

The above theorem implies in particular that the size of the spectral gap is at least $\eta_\beta$ for all $\beta$-allowable holes $H_{\be}(z)$ with $\be < \ve_\beta$.


\subsection{Local escape rate}

In this section, we will set up the estimates necessary to prove Theorem~\ref{thm:spike} via the induced
map $F_{\be}$.  The strategy is essentially the same as that carried out in \cite[Section~7]{DemTod21}, but of course
now we are interested in the case in which $z$ lies in the critical orbit, which was not allowed for 
geometric potentials in \cite{DemTod21}.  

We fix $\beta >0$ and consider the zero-hole limit as $\ve\to 0$ for $\beta$-allowable holes only.
As a first step, we use the spectral gap for $\hLp_{\be}$ given by Theorem~\ref{thm:gap}
to construct an invariant measure $\nu_{\be}$ for
the induced open map $\hF_{\be}$ supported on the survivor set $\hY^\infty_{\be} := \cap_{n=1}^\infty \hY^n_{\be} = \cap_{n=0}^\infty \hF_{\be}^{-n}(Y)$.

Define for $\psi \in BV(Y)$,
\begin{equation}
\label{eq:nu def}
\nu_{\be}(\psi) := \lim_{n\to\infty} \Lambda_{\be}^{-n} \int_{\hY^n_{\be}} \psi \, \hg_{\be} \, dm \, .
\end{equation}

\begin{lemma}
\label{lem:nu}
Fix $\beta>0$ and let $\be < \ve_\beta$ be such that $H_{\be}$ is $\beta$-allowable.
The limit in \eqref{eq:nu def} exists and defines a Borel probability measure, supported on $\hY^\infty_{\be}$, and
invariant for $\hF_{\be}$.  Moreover, $\nu_{\be}$ varies continuously as a function of $\be$ (for fixed $\beta$)
and \footnote{Indeed, we show that $\nu_{\be}(\tau)$ is continuous in $\be$ although $\tau \notin BV(Y)$.}
\begin{equation}
\label{eq:escape relation}
-\log \Lambda_{\be} = \left( \int \tau \, d\nu_{\be} \right) \mathfrak{e}(H_{\be}(z)) \, ,
\end{equation}
where $\tau$ is the inducing time for $F_{\be}$ and $\mathfrak{e}(H_{\be}(z))$ is the escape rate for $f$ from
\eqref{eq:escape f}.
\end{lemma}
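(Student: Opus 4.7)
The plan is to use the spectral decomposition of $\hLp_{\be}$ from Theorem~\ref{thm:gap} to identify $\nu_{\be}$ explicitly and then verify each assertion in turn. The key observation is that iterating $\hLp_{\be}\psi = \Lp_0(\1_{\hY^1_{\be}}\psi)$ together with the transfer operator identity $\Lp_0(\phi\cdot(h\circ F_{\be})) = h\cdot\Lp_0(\phi)$ and the factorization $\1_{\hY^n_{\be}} = \1_{\hY^1_{\be}}\cdot(\1_{\hY^{n-1}_{\be}}\circ F_{\be})$ yields $\hLp_{\be}^n\phi = \Lp_0^n(\1_{\hY^n_{\be}}\phi)$ and therefore $\int_Y\hLp_{\be}^n\phi\, dm = \int_{\hY^n_{\be}} \phi\, dm$. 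Combining this with the spectral decomposition $\hLp_{\be}^n\phi = \Lambda_{\be}^n\he_{\be}(\phi)\hg_{\be} + \cR_{\be}^n\phi$, together with $\int\hg_{\be}\, dm = 1$ and $\|\cR_{\be}^n\|_{BV}\le B_\beta\Lambda_{\be}^n e^{-\eta_\beta n}$, gives
\[
\Lambda_{\be}^{-n}\int_{\hY^n_{\be}}\phi\, dm \;=\; \he_{\be}(\phi) + O(e^{-\eta_\beta n}) \, ,
\]
so setting $\phi = \psi\hg_{\be}$ shows the limit \eqref{eq:nu def} exists and equals $\nu_{\be}(\psi) = \he_{\be}(\psi\hg_{\be})$.

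For the probability and invariance properties, positivity is inherited from $\hLp_{\be}$ (hence $\Pi_{\be}$) preserving nonnegative functions, and $\nu_{\be}(1) = \he_{\be}(\hg_{\be}) = 1$ follows from $\Pi_{\be}^2 = \Pi_{\be}$. Since $|\nu_{\be}(\psi)|\le|\psi|_\infty$, the functional extends by density to $C(Y)$ and Riesz representation produces the desired Borel probability measure. Invariance $\nu_{\be}(\psi\circ F_{\be}) = \nu_{\be}(\psi)$ is obtained by the direct computation
\[
\int_{\hY^n_{\be}}(\psi\circ F_{\be})\hg_{\be}\, dm \;=\; \int\psi\cdot\Lp_0(\1_{\hY^1_{\be}}(\1_{\hY^{n-1}_{\be}}\circ F_{\be})\hg_{\be})\, dm \;=\; \int_{\hY^{n-1}_{\be}}\psi\cdot\hLp_{\be}\hg_{\be}\, dm \;=\; \Lambda_{\be}\int_{\hY^{n-1}_{\be}}\psi\hg_{\be}\, dm \, ,
\]
after which dividing by $\Lambda_{\be}^n$ and sending $n\to\infty$ gives the result. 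Support on $\hY^\infty_{\be}$ follows since $\nu_{\be}(\hY^n_{\be}) = \lim_k\Lambda_{\be}^{-k}\int_{\hY^k_{\be}}\hg_{\be}\, dm = 1$ for every $n$.

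For continuity in $\be$, use $\nu_{\be}(\psi) = \int\Pi_{\be}(\psi\hg_{\be})\, dm$ and split
\[
\nu_{\be}(\psi) - \nu_{\be_0}(\psi) = \int(\Pi_{\be}-\Pi_{\be_0})(\psi\hg_{\be})\, dm + \int\Pi_{\be_0}(\psi(\hg_{\be}-\hg_{\be_0}))\, dm \, .
\]
The first term is bounded by $|||\Pi_{\be}-\Pi_{\be_0}|||\cdot\|\psi\hg_{\be}\|_{BV}$, which tends to $0$ by Theorem~\ref{thm:gap} using the uniform $BV$ bound on $\hg_{\be}$ from Proposition~\ref{prop:LY}; the second tends to $0$ by the $L^1$-convergence $\hg_{\be}\to\hg_{\be_0}$ and the representation of $\he_{\be_0}$ as a Radon measure (obtained from $\he_{\be_0}(\phi) = \lim_n \Lambda_{\be_0}^{-n}\int_{\hY^n_{\be_0}}\phi\, dm$ and the bound $|\he_{\be_0}(\phi)|\le\he_{\be_0}(1)|\phi|_\infty$). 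Extension to $\psi\in C(Y)$ is via BV approximation. For the footnote, truncate $\tau_N = \tau\wedge N$: each $\tau_N$ is a finite step function hence in $BV$, so $\nu_{\be}(\tau_N)$ is continuous; the error $\nu_{\be}(\tau - \tau_N) = \sum_{k>N}(k-N)\nu_{\be}(\{\tau = k\})$ decays exponentially uniformly in $\be$ because the uniform $L^\infty$ bound on $\hg_{\be}$ together with Proposition~\ref{prop:scheme}(c) and the uniform constants in Theorem~\ref{thm:gap} propagate the exponential $\mu_Y$-tail of $\tau$ to a uniform exponential tail under $\nu_{\be}$.

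The main obstacle is the escape rate identity \eqref{eq:escape relation}. Setting $\hX^N_{\be} = \bigcap_{j=0}^{N-1}f^{-j}(I\setminus H_{\be})$, the Kac-type decomposition \eqref{eq:meas_proj} gives
\[
\mu(\hX^N_{\be}) = \mu(Y)\sum_i\sum_{j=0}^{\tau_i-1}\mu_Y(I_i\cap f^{-j}(\hX^N_{\be})) \, ,
\]
and a point $x\in Y$ surviving $N$ iterations of $f$ corresponds to an induced orbit surviving exactly $k(x,N)$ iterations of $\hF_{\be}$, where $\tau^k(x)\le N < \tau^{k+1}(x)$. The plan is to sandwich: upper bound $\mu(\hX^N_{\be})$ by mass of induced orbits surviving at least $k_- = \lfloor N/(\bar\tau_{\be}+\delta)\rfloor$ iterations, and lower bound by those surviving at most $k_+ = \lceil N/(\bar\tau_{\be}-\delta)\rceil$ iterations, where $\bar\tau_{\be} := \nu_{\be}(\tau)$. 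Apply Birkhoff's theorem for the ergodic system $(\hF_{\be},\nu_{\be})$ (ergodicity coming from the simplicity of the leading eigenvalue in Theorem~\ref{thm:gap}) to conclude $\tau^k/k\to\bar\tau_{\be}$ $\nu_{\be}$-a.e., and use the exponential tail of $\tau$ to control the $\mu_Y$-mass of orbits whose Birkhoff averages deviate. Then the spectral asymptotics $\int_{\hY^k_{\be}}\hg_{\be}\, dm \sim \Lambda_{\be}^k\he_{\be}(\hg_{\be}) = \Lambda_{\be}^k$ combined with equivalence of $\mu_Y$ and $\nu_{\be}$ at the logarithmic scale yield $-\frac{1}{N}\log\mu(\hX^N_{\be}) \to -\log\Lambda_{\be}/\bar\tau_{\be}$, which is \eqref{eq:escape relation}. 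The subtlety here is handling both the orbits with $j\in[0,\tau_i)$ in the outer sum and the a.e.\ limit with respect to $\mu_Y$ (versus $\nu_{\be}$); the uniform tail estimates for $\tau$ established above, together with $\hg_{\be}$ being uniformly bounded and bounded away from $0$ on an appropriate set, make this comparison valid.
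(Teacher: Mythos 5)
Your treatment of the existence of the limit, the Borel probability measure structure, invariance, the continuity estimate via $\Pi_{\be}$ and $\hg_{\be}$, and the truncation argument for $\nu_{\be}(\tau)$ all track the paper's proof closely and are correct.

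The gap is in your derivation of the Abramov-type escape rate identity \eqref{eq:escape relation}, which is the heart of the lemma and where your route departs from the paper's. Your Birkhoff-sandwiching plan does not go through as stated. Birkhoff's theorem for $(\hF_{\be}, \nu_{\be})$ gives $\tau^k/k \to \bar\tau_{\be}$ for $\nu_{\be}$-a.e.\ $x$, but $\nu_{\be}$ is supported on $\hY^\infty_{\be}$, which has $\mu_Y$-measure zero whenever the hole is nondegenerate; so this a.e.\ convergence says nothing directly about the $\mu_Y$-mass of the sets whose measure you are trying to estimate. You acknowledge this ``subtlety'' but do not resolve it, and it is not resolvable by merely appealing to the exponential $\mu_Y$-tail of $\tau$: what one needs is a large-deviations-type control of the $\mu_Y$-measure of points that survive $k$ induced iterates \emph{and} have $\tau^k/k$ atypical for $\nu_{\be}$, which is a rate-function estimate rather than a tail estimate. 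Similarly, the ``uniform exponential tail under $\nu_{\be}$'' you quote from the truncation paragraph is a tail in $k$ for fixed $\tau > N$, not a statement about Birkhoff-average deviations of the surviving set under $\mu_Y$.

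The paper bypasses this entirely: it first observes that for a dense set of $\be$ the punctured map $F_{\be}$ has a countable Markov partition with finite images, and for those ``Markov holes'' appeals to \cite[Section~6.4.1]{DemTod17} (which identifies $\nu_{\be}$ as the equilibrium state for $-\Xi_{\be}\log|DF_{\be}| - \log\Lambda_{\be}$) and to \cite[Lemma~5.3]{BDM} (which identifies $\nu_{\be}$ as a Gibbs state with pressure zero for $-\Xi_{\be}\log|DF_{\be}| - \tau\,\mathfrak{e}(H_{\be}(z))$); combining the two characterizations gives \eqref{eq:escape relation} exactly on that dense set. It then extends to all $\beta$-allowable $\be$ by using the continuity of $\Lambda_{\be}$ and $\nu_{\be}(\tau)$ (which you proved) together with the monotonicity of $\mathfrak{e}(H_{\be}(z))$ in $\be$. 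If you want a self-contained proof of \eqref{eq:escape relation}, you must supply either the equilibrium-state/Gibbs machinery the paper cites, or a genuine large-deviations argument tied to the transfer operator; the naive time-change via the ergodic theorem for $\nu_{\be}$ is not sufficient.
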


\begin{proof}
The limit in \eqref{eq:nu def} exists due to the spectral decomposition from Theorem~\ref{thm:gap} and the
conformality of $m$:
\[
\lim_{n \to \infty} \Lambda_{\be}^{-n} \int_{\hY^n_{\be}} \psi \, \hg_{\be} \, dm = \lim_{n \to \infty} \int_Y \Lambda_{\be}^{-n} \hLp_{\be}^n(\psi \hg_{\be}) \, dm = \he_{\be}(\psi \hg_{\be}) \, ,
\]
for any $\psi \in BV(Y)$ since if $\psi \in BV(Y)$, then also $\psi \hg_{\be} \in BV(Y)$.  
From \eqref{eq:nu def}, 
$| \nu_{\be}(\psi) | \le \nu_{\be}(1) |\psi|_\infty$, so that 
$\nu_{\be}$ extends to a bounded linear functional on $C^0(Y)$, i.e.
$\nu_{\be}$ is a Borel measure, clearly supported on $\hY_{\be}^\infty$.  Since $\nu_{\be}(1)=1$, $\nu_{\be}$ is a probability measure.

Next, we prove that $\nu_{\be}$ is continuous as an element of $BV(Y)^*$.
Remark that by the above calculation,
$\nu_{\be}(\psi) = \he_{\be}(\hg_{\be} \psi)$ for $\psi \in BV(Y)$, and when $\ve=0$,
$\mu_Y(\psi) = e_0(g_0 \psi) = \int g_0 \psi \, dm$, since $m$ is conformal for $\Lp_0$.  
Indeed, $\he_{\be}$ defines a conformal measure $\holm_{\be}$ for $\hLp_{\be}$, so that $\he_{\be}(\psi) = \int_Y \psi \, d\holm_{\be}$ and $d\nu_{\be} = \hg_{\be} d\holm_{\be}$.
Thus, for
$\psi \in BV(Y)$ and $\be, \be' < \ve_\beta$,
\begin{equation}
\label{eq:nu cont}
\begin{split}
| \nu_{\be}(\psi) - \nu_{{\be '}}(\psi) | & \le | \he_{\be}(\hg_{\be} \psi - \hg_{\be'} \psi) | + | \he_{\be}(\hg_{\be'} \psi) - e_{\be'}(\hg_{\be'} \psi)| \\
& \le \left| \int (\hg_{\be} - \hg_{\be'} ) \psi \, d\holm_{\be} \right| + \left| \int \big( \Pi_{\be}(\hg_{\be'} \psi) - \Pi_{\be'}(\hg_{\be'} \psi) \big) \, dm \right| \\
& \le |\psi|_\infty | \hg_{\be} - \hg_{\be'} |_{L^1(\holm_{\be})} + ||| \Pi_{\be} - \Pi_{\be'} ||| \, \| \hg_{\be'} \psi \|_{BV} \, .
\end{split}
\end{equation}
Both differences go to 0 as $\be' \to \be$ by Theorem~\ref{thm:gap}, while $\| \hg_{\be'} \|_{BV}$ is uniformly 
bounded in $\be'$ by Proposition~\ref{prop:LY}.
We conclude that $\nu_{\be}$ is continuous in $\be$ for fixed $\beta$ when acting on $BV$ functions.  

It remains to prove \eqref{eq:escape relation}.
Unfortunately, 
$\tau \notin BV(Y)$  so first we must show that $\nu_{\be}(\tau)$ is well-defined.  
Indeed, it is easy to check that $\hLp_{\be} \tau \in BV$.  This holds since
$\tau$ is constant on each 1-cylinder $Y_{i,\be}$ for $F_{\be}$.  Thus $\hLp_{\be} \tau$ has discontinuities
only at the endpoints of $\{ \hF_{\be}(Y_{i, \be}) \}_i $, which
is a finite collection of intervals.

It follows also that $\hLp_{\be} (\tau \hg_{\be}) \in BV(Y)$.  Thus using \eqref{eq:decomp},
\[
\begin{split}
\lim_{n \to \infty} \Lambda_{\be}^{-n} & \int_{\hY^n_{\be}} \tau \hg_{\be} \, dm
= \lim_{n \to \infty} \Lambda_{\be}^{-n} \int_Y \hLp_{\be}^{n-1}(\hLp_{\be}(\tau \hg_{\be})) \, dm \\
& = \Lambda_{\be}^{-1} \int \Pi_{\be} (\hLp_{\be}(\tau \hg_{\be})) \, dm + 
\lim_{n \to \infty} \int \Lambda_{\be}^{-n} \cR_{\be}^{n-1} ( \hLp_{\be}(\tau \hg_{\be}) ) \, dm \, ,
\end{split}
\]
and the second term converges to 0 by Theorem~\ref{thm:gap}.  Thus the limit defining
$\nu_{\be}(\tau)$ exists and is uniformly bounded in $\be$ for fixed $\beta$.  More than this,
the above calculation can be improved to show that $\tau$ is uniformly (in $\be$) integrable with respect to $\nu_{\be}$,
as follows.
For each $N>0$, we use the above to estimate,
\[
\nu_{\be}(1_{\tau > N} \cdot \tau) = \lim_{n \to \infty} \Lambda_{\be}^{-n} \int \hLp_{\be}^{n-1}(\hLp_{\be}( 1_{\tau>N} \cdot \tau \hg_{\be})) \, dm
\le \Lambda_{\be}^{-1} | \hLp_{\be}( 1_{\tau>N} \cdot \tau \hg_{\be})|_\infty \, .
\]
Then using bounded distortion as in \eqref{eq:DF}, one has for $x \in Y$,
\[
\begin{split}
|  \hLp_{\be}( 1_{\tau >N} \cdot \tau \hg_{\be})(x) | & \le \frac{(1+C_d)|\hg_{\be}|_\infty}{C_\beta} \sum_{\substack{y \in \hF_{\be}^{-1}x \\ \tau(y) > N}} \tau(y) m(Y_{i, \be}(y)) \\
& \le C \sum_{k>N} k \, m(\tau=k) \le C' \sum_{k > N} k e^{-k(\log \lambda_{per} - \eta)} \, ,
\end{split}
\]
where we have used Proposition~\ref{prop:scheme}(c).

It follows that for each $\kappa>0$, there exists $N>0$ such that $\sup_{\be \in [0, \ve_\beta)} \{ \nu_{\be}(1_{\tau > N} \cdot \tau) \} < \kappa$
where the sup is restricted to values of $\be$ such that $H_{\be}$ is $\beta$-allowable. 
Let $\tau^{(N)} = \min \{ \tau, N \}$ and note that $\tau^{(N)} \in BV(Y)$.  
Then taking a limit along $\be'$ corresponding to $\beta$-allowable $H_{\be'}$ yields,
\[
\lim_{\be' \to \be} | \nu_{\be}(\tau) - \nu_{{\be '}}(\tau)| \le \lim_{\be' \to \be} | \nu_{\be}(\tau^{(N)}) - \nu_{{\be '}}(\tau^{(N)})| + |\nu_{\be}(1_{\tau>N} \cdot \tau)|
+ |\nu_{{\be '}}(1_{\tau>N} \cdot \tau)| \le 2\kappa \, ,
\]
since we have shown that $\nu_{\be'} \to \nu_{\be}$ as elements of $BV(Y)^*$.  Since $\kappa>0$ was arbitrary, this
proves that $\nu_{\be}(\tau)$ varies continuously in $\be$.

Recall that $\{ Y_i \}_i$ denotes the set of 1-cylinders for $F_0$ and let $\cJ_0 = \{ F_0(Y_i) \}_i$ denote the finite
set of images.  The covering property of $f$ implies that the set of preimages of endpoints of elements
of $\cJ_0$ is dense in $I$.
Thus there is a dense set of $\be < \ve_\beta$ such that
$F_{\be}$ admits a countable Markov partition with finite images.  For such $\be$,  
\cite[Section~6.4.1]{DemTod17} implies that $\nu_{\be}$ is an equilibrium state for the potential 
$- \Xi_{\be} \cdot \log (DF_{\be}) - \log \Lambda_{\be}$, where
$\Xi_{\be}(x) = 1$ if $x \in Y \setminus H_{\be}'$ and $\Xi_{\be} = \infty$ if $x \in H_{\be}'$. 
Similarly, \cite[Lemma~5.3]{BDM} implies
that $\nu_{\be}$ is a Gibbs measure for the potential 
$- \Xi_{\be} \cdot \log (DF_{\be}) - \tau \mathfrak{e}(H_{\be}(z))$ with pressure equal to 0.
Putting these together yields \eqref{eq:escape relation} for such `Markov holes.'  

Finally, we extend the relation to all $\be$ via the continuity of $\Lambda_{\be}$ and $\nu_{\be}(\tau)$.
Since $\mathfrak{e}(H_{\be}(z))$ is monotonic in $\be$ and equals $-\log \Lambda_{\be}/\nu_{\be}(\tau)$ on a dense set
of $\be$, it must also be continuous in $\be$.  Thus the relation \eqref{eq:escape relation} holds for all $\be < \ve_\beta$
for which $H_{\be}$ is $\beta$-allowable.  
\end{proof}

With Lemma~\ref{lem:nu} in hand, we see that the limit we would like to compute to prove
Theorem~\ref{thm:spike} can be expressed as follows, 
\begin{equation}
\label{eq:ratio split}
\frac{\mathfrak{e}(H_{\be}(z))}{\mu(H_{\be}(z))} = \frac{- \log \Lambda_{\be}}{\mu_Y(H_{\be}')} \cdot \frac{\int \tau \, d\mu_Y}{\int \tau \, d\nu_{\be} } \cdot \frac{\mu(H_{\be}')}{\mu(H_{\be}(z))} \, ,
\end{equation}
where as before $\mu_Y = \frac{\mu|_Y}{\mu(Y)}$, and $1/\mu(Y) = \int_Y \tau \, d\mu_Y$ by Kac's Lemma since $F_\ve$ is a first-return map to $Y$.  Theorem~\ref{thm:spike} will follow once
we show that as $\be \to 0$,
\begin{equation}
\label{eq:limit split}
\frac{- \log \Lambda_{\be}}{\mu_Y(H_{\be}')}  \to 1 \, \quad \frac{\int \tau \, d\mu_Y}{\int \tau \, d\nu_{\be} } \to 1 \, ,
\quad \frac{\mu(H_{\be}')}{\mu(H_{\be}(z))} \to 1 - \lambda_z^{-1/\ell} \, . 
\end{equation}

The third limit above is precisely \eqref{eq:limit ratio} when $z \in \mbox{orb}(f(c))$ is periodic
(and is simply 1 by Section~\ref{ssec:ratiopreper} when $z \in \mbox{orb}(f(c))$ is preperiodic)
, so we proceed to prove the first two.
We first prove these limits for $\beta$-allowable holes with a fixed $\beta>0$ 
in Lemmas~\ref{lem:induced limit} and \ref{lem:return int}.  Then in Section~\ref{sec:beta approx}
we show how
to obtain the general limit as $\be \to 0$.

\begin{lemma}
\label{lem:induced limit}
For fixed $\beta>0$ and any sequence of $\beta$-allowable holes $H_{\be}(z)$,
\[
\lim_{\be \to 0} \frac{- \log \Lambda_{\be}}{\mu_Y(H_{\be}')} = 1 \, .
\]
\end{lemma}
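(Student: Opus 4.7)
The plan is to reduce the statement to an integral comparison via a spectral identity, split off a negligible non-principal contribution using the estimates already in hand, and then handle the main obstacle of averaging $\hg_\be$ over the shrinking neighborhood of $c$ that carries the principal subchains.

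First, I would integrate the eigenvalue equation $\hLp_\be \hg_\be = \Lambda_\be \hg_\be$ against $m$. Using that $m$ is conformal for $\Lp_0$ (so $\int \Lp_0 \psi\,dm = \int \psi\,dm$), together with the identity $\hLp_\be\psi = \Lp_0(1_{Y\setminus H'_\be}\psi)$ and the normalization $\int \hg_\be\, dm = 1$ from Theorem~\ref{thm:gap}, this yields the clean formula
\[
\Lambda_\be \;=\; 1 - \int_{H'_\be}\hg_\be\, dm.
\]
Since $d\mu_Y = g_0\, dm$, we also have $\mu_Y(H'_\be) = \int_{H'_\be} g_0\, dm$, and the continuity in Theorem~\ref{thm:gap} gives $\Lambda_\be \to 1$, hence $-\log\Lambda_\be \sim 1-\Lambda_\be$. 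The lemma therefore reduces to proving
\[
\lim_{\be \to 0} \frac{\int_{H'_\be}\hg_\be\, dm}{\int_{H'_\be} g_0\, dm} \;=\; 1.
\]

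Next, using the estimates of Section~\ref{sec:holemeas}, I would split $H'_\be$ into the principal subchains $H'_{\be,\mathrm{princ}} \subset (c-\delta(\be), c+\delta(\be))$ and the non-principal chains. By \eqref{eq:princH'} and \eqref{eq:nonprincip}, the denominator satisfies $\mu_Y(H'_\be) \sim 2C_p\rho\,\ve_*^{1/\ell}/\mu(Y)$ while the non-principal contribution is only $O(\ve_*) = o(\ve_*^{1/\ell})$ since $\ell > 1$. The uniform Lasota-Yorke inequalities of Proposition~\ref{prop:LY}, combined with the spectral gap of Theorem~\ref{thm:gap}, yield a uniform $L^\infty$ bound on $\hg_\be$ (extracted from a uniform $BV$ bound on the eigenfunctions), so the same dichotomy applies to the numerator. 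It thus suffices to show $\int_{H'_{\be,\mathrm{princ}}}\hg_\be\, dm \sim (\rho/\mu(Y))\cdot m(H'_{\be,\mathrm{princ}})$.

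The main obstacle is that the $L^1$ convergence $\hg_\be \to g_0$ from Theorem~\ref{thm:gap} is a priori too weak to control averages on the shrinking neighborhood $H'_{\be,\mathrm{princ}} \subset (c-\delta(\be),c+\delta(\be))$ of $c$, since its measure $\sim \delta(\be) \to 0$. The resolution combines three ingredients: (i) continuity of $g_0$ at $c$ with $g_0(c) = \rho/\mu(Y)$ from the Misiurewicz-Thurston structure, cf.~\cite{Mis81}; (ii) the uniform $BV$ bound on $\hg_\be$, which prevents arbitrary oscillation and gives compactness; and (iii) the fact that the only new discontinuities of $\hg_\be$ relative to $g_0$ arise from the extra cuts introduced in constructing $F_\be$, confined inside $(c-\delta(\be),c+\delta(\be))$ by the chain structure of Section~\ref{ssec:Markov}. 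A Helly-type compactness argument combining (ii) and (iii) with $L^1$ convergence forces $\hg_\be \to g_0$ uniformly on compact subsets of $Y$ avoiding $c$; an averaging argument near $c$, exploiting continuity of $g_0$ there and the uniform variation bound away from the cuts, then pushes the common asymptotic value $\rho/\mu(Y)$ through to both integrals, yielding the desired ratio $1$.
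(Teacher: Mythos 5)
Your opening moves agree with the paper: the identity $1 - \Lambda_{\be} = \int_{H'_{\be}} \hg_{\be}\, dm$ and the reduction to comparing $\int_{H'_{\be}} \hg_{\be}\, dm$ with $\mu_Y(H'_{\be}) = \int_{H'_{\be}} g_0\, dm$ are exactly how the paper begins (see equation \eqref{eq:reduce}), and you correctly flag the central difficulty: the H\"older-in-$m(H'_{\be})$ $L^1$-convergence from Theorem~\ref{thm:gap} is too weak to control the average of $\hg_{\be}$ over the shrinking set $H'_{\be}$.

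The resolution you propose, however, has a genuine gap. The claim that a Helly-type argument together with the uniform $BV$ bound, the $L^1$-convergence, and the localisation of the new discontinuities of $\hg_{\be}$ forces $\hg_{\be} \to g_0$ \emph{uniformly} on compacts avoiding $c$ is not correct: a uniformly bounded $BV$ sequence converging in $L^1$ and with no extra jump discontinuities can still fail to converge uniformly (a moving continuous bump of fixed height and shrinking width keeps variation and sup-norm bounded while the $L^1$ error vanishes). The same construction, placed a distance $\sim\delta(\be)$ from $c$, shows that the uniform $BV$ bound does not prevent the average $\frac{1}{m(H'_{\be})}\int_{H'_{\be}} \hg_{\be}\, dm$ from staying bounded away from $g_0(c)$ even though $\hg_{\be} \to g_0$ in $L^1$. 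So the ``averaging argument near $c$'' as stated does not close the proof; one would need an additional input (e.g.\ an equicontinuity-type estimate for $\hg_{\be}$ near $c$) that you have not established and that does not follow from the listed ingredients.

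The paper sidesteps this entirely. Rather than proving any pointwise or local-average convergence of $\hg_{\be}$ to $g_0$, it substitutes the spectral representation $\hg_{\be} = \frac{1}{\he_{\be}(g_0)}\bigl(\Lambda_{\be}^{-n}\hLp_{\be}^n g_0 - \Lambda_{\be}^{-n}\cR_{\be}^n g_0\bigr)$ into $\int_{H'_{\be}}\hg_{\be}\, dm$, uses $\Lp_0^n g_0 = g_0$ to peel off the main term $\int_{H'_{\be}} g_0\, dm$, controls the remainder $\Lambda_{\be}^{-n}\cR_{\be}^n g_0$ by the uniform spectral gap, and controls $(\Lp_0^n - \hLp_{\be}^n)g_0$ by an $L^\infty$ bound $\rho_n(\be)$ that tends to $0$ as $\be \to 0$ for each fixed $n$ (this is where $\beta$-allowability enters via Lemma~\ref{lem:beta} and \eqref{eq:DF}). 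Choosing $n$ large first and then $\be$ small yields the lemma. If you want to save your approach, the missing piece is essentially the content of that $L^\infty$ estimate: it is what replaces the unjustified uniform convergence of $\hg_{\be}$ near $c$.
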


\begin{proof}
The lemma could follow using the results of \cite{KL zero}, yet since $z \notin Y$ in our setting, it is not clear how to 
verify the aperiodicity of $H_{\be}'$ without imposing an additional condition on the reentry of points to $Y$
which have spent some time in a neighbourhood of $z$.  To avoid this, we will argue directly, as in
\cite[Proof of Theorem~7.2]{DemTod21}, yet our argument is simpler since our operators $\hLp_{\be}$ approach a
fixed operator $\Lp_0$ via Lemma~\ref{lem:small pert} in contrast to the situation in \cite{DemTod21}.

We assume that $\be < \ve_\beta$ so that we are in the setting of Theorem~\ref{thm:gap}.  Then
since $g_0 \in BV(Y)$, iterating
\eqref{eq:decomp} yields for any $n \ge 1$,
\[
\hLp_{\be}^n g_0 = \Lambda_{\be}^n \he_{\be}(g_0) \hg_{\be} + \cR_{\be} g_0 \implies
\hg_{\be} = \frac{1}{\he_{\be}(g_0)} ( \Lambda_{\be}^{-n} \hLp_{\be}^n g_0 - \Lambda_{\be}^{-n} \cR_{\be}^n g_0 ) \, .
\]
Using this relation and Lemma~\ref{lem:small pert} yields,
\begin{equation}
\label{eq:reduce}
\begin{split}
1 - \Lambda_{\be} & = \int \hg_{\be} \, dm - \int \hLp_{\be} \hg_{\be} \, dm
= \int (\Lp_0 - \hLp_{\be}) \hg_{\be} \, dm = \int_{H_{\be}'} \hg_{\be} \, dm \\
& = \frac{1}{\he_{\be}(g_0)} \int_{H_{\be}'} ( \Lambda_{\be}^{-n} \hLp_{\be}^n g_0 - \Lambda_{\be}^{-n} \cR_{\be}^n g_0 ) \, dm \\
& = \frac{1}{\he_{\be}(g_0)} \left( \int_{H_{\be}'} g_0 \, dm - \int_{H_{\be}'} (1-\Lambda_{\be}^{-n} \hLp_{\be}) g_0 \, dm
- \int_{H_{\be}'} \Lambda_{\be}^{-n} \cR_{\be}^n g_0 \, dm \right) \, .
\end{split}
\end{equation}
By Theorem~\ref{thm:gap}, the third term on the right side of \eqref{eq:reduce} is bounded by
\[
\int_{H_{\be}'} \Lambda_{\be}^{-n} \cR_{\be}^n g_0 \, dm \le B_\beta e^{- \eta_\beta n} \| g_0 \|_{BV} c_g^{-1} \int_{H_{\be}'} g_0 \, dm \, ,
\]
where $c_g = \inf_Y g_0 > 0$ according to \eqref{eq:lower g}.

Next, the second term on the right side of \eqref{eq:reduce} can be expressed as,
\[
\int_{H_{\be}'} (1-\Lambda_{\be}^{-n} \hLp_{\be}) g_0 \, dm
= (1-\Lambda_{\be}^{-n}) \int_{H_{\be}'} g_0 \, dm + \Lambda_{\be}^{-n} \int_{H_{\be}'} (\Lp_0^n - \hLp_{\be}^n) g_0 \, dm \, ,
\]
using the fact that $\Lp_0 g_0 = g_0$.  We claim that $(\Lp_0^n - \hLp_{\be}^n) g_0$ can be made small in
$L^\infty(Y)$.  To see this, choose $n = kn_0$ and write for $x \in Y$,
\[
(\Lp_0^n - \hLp_{\be}^n) g_0(x) = \sum_{y \in F_0^{-n} x} \frac{g_0(y) 1_{Y \setminus \hY^n_{\be}}(y)}{|DF_0^n(y)|}
= \sum_{j=0}^{k-1} \sum_{y \in F_0^{-n}x} \frac{g_0(y) 1_{\hY_{\be}^{jn_0} \setminus \hY_{\be}^{(j+1)n_0}}(y)}{|D F_0^n(y)|} \, ,
\]
where we have used the fact that when $n = kn_0$, 
$Y \setminus \hY_{\be}^n = \cup_{j=0}^{k-1} (\hY_{\be}^{jn_0} \setminus \hY_{\be}^{(j+1)n_0})$ and
the union is disjoint.
Next, if $Y_{i,j, \ve}(y)$ is the $jn_0$-cylinder for $F_{\be}^{jn_0}$ containing $y$, then by bounded distortion and Lemma~\ref{lem:beta},
\begin{equation}
\label{eq:DF}
|DF^{jn_0}(y)| \ge \frac{m(F^{jn_0}(Y_{i,j, \ve}(y))}{(1+C_d)m(Y_{i,j, \ve}(y))} \ge \frac{C_\beta}{(1+C_d)m(Y_{i,j,\ve}(y))} \, .
\end{equation}
Then, applying also \eqref{eq:n0}, we have
\[
(\Lp_0^n - \hLp_{\be}^n) g_0(x) \le \frac{(1+C_d)|g_0|_\infty}{C_\beta} \sum_{j=0}^{k-1} m(\hY^{jn_0}_{\be} \setminus \hY^{(j+1)n_0}_{\be}) 3^{k-j} =: \rho_n(\be) \, ,
\]
and note that $\rho_n(\be) \to 0$ as $\be \to 0$ for fixed $n = kn_0$.  Thus, the second term on the right side of \eqref{eq:reduce}
is estimated by,
\[
\int_{H_{\be}'} (1-\Lambda_{\be}^{-n} \hLp_{\be}) g_0 \, dm
\le \big(1- \Lambda_{\be}^{-n} + c_g^{-1} \Lambda_{\be}^{-n} \rho_n(\be) \big) \mu_Y(H_{\be}').
\]

Putting these estimates into \eqref{eq:reduce} yields,
\[
\frac{1- \Lambda_{\be}}{\mu_Y(H_{\be}')}
= \frac{1}{\he_{\be}(g_0)} \left( 1 + \mathcal{O}( e^{-\eta_\beta n} + (1-\Lambda_{\be}^{-n}) + \Lambda_{\be}^{-n} \rho_n(\be) \right).
\]
Fixing $\kappa>0$, first choose $n = kn_0$ so that $e^{-\eta_\beta n} < \kappa$.  Next, choose
$\be$ so small that by the continuity of the spectral data from the proof of Theorem~\ref{thm:gap},
$|1-\Lambda_{\be}^{-n}| < \kappa$, $\Lambda_{\be}^{-n} \le 2$, $\rho_n(\be) < \kappa$ and 
$|\he_{\be}(g_0) -1| < \kappa$.  This last bound is possible since $e_0(g_0) = \int g_0 \, dm = 1$.  Thus the
relevant expression is $1 + \mathcal{O}(\kappa)$ for $\be$ sufficiently small and $H_{\be}$ $\beta$-allowable.  Since
$\kappa$ is arbitrary, the lemma is proved.
\end{proof}

Recall that the inducing time $\tau$ for $F_{\be}$ does not depend on $\be$.

\begin{lemma}
\label{lem:return int}
For fixed $\beta>0$ and any sequence of $\beta$-allowable holes $H_{\be}(z)$,
\[
\lim_{\be \to 0} \frac{\int \tau \, d\mu_Y}{\int \tau \, d\nu_{\be} } = 1 \, .
\]
\end{lemma}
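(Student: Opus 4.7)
The plan is to show that $\nu_{\be}(\tau) \to \mu_Y(\tau)$ as $\be \to 0$ along any sequence of $\beta$-allowable holes; combined with $\int \tau \, d\mu_Y = 1/\mu(Y) > 0$ from Kac's Lemma, this yields the stated ratio limit.

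The first step is to identify the zero-hole limit. At $\be = 0$, the punctured operator reduces to $\hLp_0 = \Lp_0$, its leading eigenvalue is $\Lambda_0 = 1$ with leading eigenfunction $g_0$ satisfying $\int g_0 \, dm = 1$, and the conformal measure $\holm_0$ coincides with Lebesgue measure $m$. Substituting these into definition \eqref{eq:nu def} gives $\nu_0(\psi) = \int \psi g_0 \, dm = \mu_Y(\psi)$, so the task reduces to proving $\nu_{\be}(\tau) \to \nu_0(\tau)$.

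For this I will rerun the truncation argument from the proof of Lemma~\ref{lem:nu} but with $\be' = 0$ in place of an arbitrary interior parameter. Both of its ingredients extend: first, the uniform tail bound
\[
\sup_{\be \in [0, \ve_\beta)} \nu_{\be}(1_{\tau > N} \cdot \tau) \le C \sum_{k > N} k e^{-k(\log \lambda_{per} - \eta)}
\]
shrinks to zero as $N \to \infty$ and is valid also at $\be = 0$, since its derivation rests only on bounded distortion and Proposition~\ref{prop:scheme}(c); second, for the truncation $\tau^{(N)} := \min\{\tau, N\} \in BV(Y)$, the estimate \eqref{eq:nu cont} applied between $\be$ and $0$ gives $\nu_{\be}(\tau^{(N)}) \to \nu_0(\tau^{(N)})$ for each fixed $N$, using $|\hg_{\be} - g_0|_{L^1(\holm_{\be})} \to 0$ and $||| \Pi_{\be} - \Pi_0 ||| \to 0$ from Theorem~\ref{thm:gap}. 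A standard two-step argument (choose $N$ to make the tails small uniformly in $\be$, then $\be$ small to control the truncated piece) delivers the convergence.

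The one point to double-check is that the Keller--Liverani continuity of Theorem~\ref{thm:gap} really extends to the endpoint $\be = 0$. This is already built into the setup: $\hLp_{\be}$ was introduced precisely as a perturbation of $\Lp_0$ via Lemma~\ref{lem:small pert}, the Lasota--Yorke inequalities of Proposition~\ref{prop:LY} hold uniformly for every $\beta$-allowable $\be \in [0, \ve_\beta)$, and $\Lp_0$ itself carries a spectral gap on $BV(Y)$; so the spectral data $\Pi_0$ and $g_0$ arise as the natural limits of $\Pi_{\be}$ and $\hg_{\be}$ as $\be \to 0$, and no separate argument beyond what Theorem~\ref{thm:gap} already supplies is needed.
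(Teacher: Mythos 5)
Your proposal is correct and follows essentially the same route as the paper: both reduce the claim to showing $\nu_{\be}(\tau)\to\nu_0(\tau)=\mu_Y(\tau)$ by rerunning the truncation-plus-continuity argument from the proof of Lemma~\ref{lem:nu} with the reference parameter set to $0$, and both then observe the limit value is positive (you via $\mu_Y(\tau)=1/\mu(Y)>0$ from Kac, the paper via $\nu_{\be}(\tau)\ge 1$) to conclude. The only cosmetic difference is that the paper compresses these steps into a two-line reference back to Lemma~\ref{lem:nu}, whereas you unpack them explicitly.
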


\begin{proof}
As above, we assume $\be < \ve_\beta$.  Recall that $\nu_0 = \mu_Y$ and $e_0(\psi) = \int \psi \, dm$ since
$m$ is conformal for $\Lp_0$.  Thus the estimates \eqref{eq:nu cont} and following in the proof
of Lemma~\ref{lem:nu} hold equally well with ${\be '}=0$ throughout.  Thus the continuity
of $\nu_{\be}(\tau)$ extends to $\be =0$.  This, plus the fact that $\nu_{\be}(\tau) \ge 1$ since $\tau \ge 1$ implies the
required limit.
\end{proof}


\section{Completion of the proof of Theorem~~\ref{thm:spike} via approximation}
\label{sec:beta approx}

Putting together Section~\ref{ssec:ratioper} and Lemmas~\ref{lem:induced limit} and \ref{lem:return int}, we have
proved Theorem~\ref{thm:spike} for each $\beta>0$ along sequences $(\be_n)_n$ where each $H_{\be_n}$ is $\beta$-allowable.
It remains to consider the alternative case where we have to approximate 
 a given sequence $H_{\be_n}$  by $\beta$-allowable $H_{\be_n'}$.  We will focus our estimates on approximating on the left, with the right-hand side following similarly.  We also start by assuming $z\in \mbox{orb}(f(c))$ is periodic.

We remark that if $H_{\be}$ is $\beta$-allowable, then it is also $\beta'$-allowable for any $\beta' < \beta$, 
so we will take our approximating sequence with $\beta$ tending to 0.  Without loss of generality and for convenience, we assume
$\beta < (2\lambda_z)^{-1}$.

Recall the notation $(a_i)_i, (b_i)_i$ from Section~\ref{ssec:Markov},  i.e. $(a_i, a_{i+1})$
are the $f^s$-images of subchains of intervals accumulating on $z$ from the left, while $(b_{i+1}, b_i)$ accumulate
on $z$ from the right.  Recall that $\frac{z-a_i}{b_i-z}$ are uniformly bounded away from 0 and $\infty$.  As in Lemma~\ref{lem:simple_chain},
we have $f^p(a_i, a_{i+1}) = (a_{i-1}, a_i)$ if $f^p$ is orientation preserving at $z$, and
$f^{2p}(a_i, a_{i+1}) = (a_{i-2}, a_{i-1})$ if $f^p$ is orientation reversing.  We will assume the 
orientation preserving case in the following, the orientation reversing case being similar.

Each interval $(a_i, a_{i+1})$ is $f^s(I_j)$ for some one-cylinder $I_j$ for $F_{\be}$.  Each
$I_j$ in turn is subdivided into a countable union of $n_0$-cylinders for $F_{\be}$
and so $(a_i, a_{i+1})$ is subdivided into a countable union of $f^s$ images of these $n_0$-cylinders.
Indeed, given the chain structure, the arrangement of $f^s$ images of the $n_0$-cylinders
in $(a_i, a_{i+1})$ maps precisely under $f^p$ onto the $f^s$ images of the $n_0$-cylinders
in $(a_{i-1}, a_i)$.  

We describe this structure as follows:  each $(a_i, a_{i+1})$ is subdivided into finitely many 
intervals (depending on $n_0$), which we index by $j$, $j = 1, \ldots, J_i$.  The end points of these intervals are
preimages of the boundaries of one-cylinders in $Y$, some of which may map onto $c$ before 
$n_0$ iterates of $F_\ve$,
in which case they are in fact accumulation points of preimages of the chains recalled above.
We label these chains of intervals $(c_{i,j,k}, c_{i,j,k+1})$ which accumulate on the $j$th point
in $(a_i, a_{i+1})$ from the left, and similarly $(d_{i,j,k+1}, d_{i,j,k})$ accumulate from the right.

Set $v_{i,j,k} = c_{i,j,k+1} - c_{i,j,k}$.  According to our definition, if $H_\ve$ is $\beta$-left-allowable, 
then 
\[
z - \ve \in A_{L, \beta} := \cup_{i,j,k} [c_{i,j,k}+ \beta v_i, c_{i,j,k+1} - \beta v_i] \, .
\]
 So if $H_\ve$ is not $\beta$-left-allowable, then $z - \ve \in (c_{i,j,k+1} - \beta v_{i,j,k}, c_{i,j,k+1} + \beta v_{i,j,k})$ 
 for some $i,j,k$.  Remark that if there is no accumulation of chains at the $j$th point, then the index $k$ is redundant.
 Also, there is some duplication since $c_{i,0} = a_i$, while $c_{i,J_i} = a_{i+1}$, yet for uniformity of notation we 
 denote each range of non-$\beta$-left-allowable values of $z-\ve$ as above.
 
We approximate $\ve$ from above by
$\ve_o^L := z - (c_{i,j,k+1} - \beta v_{i,j,k})$ and from below by
$\ve_u^L := z - (c_{i,j,k+1} + \beta v_{i,j,k})$.  Both $H_{\ve_o^L}$ and $H_{\ve_u^L}$ are $\beta$-left-allowable
 and $\ve \in (\ve_u^L, \ve_o^L)$. 
Thus,
\[
\frac{\ve_u^L}{\ve_o^L} \le \frac{\ve_u^L}{\ve} \le \frac{\ve_o^L}{\ve} \le \frac{\ve_o^L}{\ve_u^L} \, ,
\] 
so we focus on estimating the outer two quantities.  Notice also that the above implies $\frac{\ve_u^L}{\ve_u^R}, \frac{\ve_o^L}{\ve_o^R}$ are  uniformly bounded away from 0 and $\infty$, due to the uniform bound on $\frac{z-a_i}{b_i-z}$, so we may apply \eqref{eq:limit ratio}.

Now,
\[
\frac{\ve_u^L}{\ve_o^L} = \frac{z - (c_{i,j,k+1} + \beta v_{i,j,k})}{z - (c_{i,j,k+1} - \beta v_{i,j,k})}
= 1 - \frac{2 \beta v_{i,j,k}}{z - (c_{i,j,k} - \beta v_{i,j,k})} \, .
\]
Note that $z - (c_{i,j,k+1} - \beta v_{i,j,k}) \ge z - a_{i+1}$ while $v_{i,j,k} \le e_i := a_{i+1} - a_i$.
But due to the chain structure around $z$, we have $z - a_{i+1} > e_{i+1} \sim \lambda_z^{-1} e_i$, so that
\begin{equation}
\label{eq:ratio bound}
\frac{\ve_u^L}{\ve_o^L} \ge 1 - \frac{2 \beta e_i}{e_{i+1}} \gtrsim 1 - 2 \beta \lambda_z \, .
\end{equation}
In the same way, $\frac{\ve_o^L}{\ve_u^L} \lesssim 1 + 2 \beta \lambda_z$.  The right hand estimates for
the analogous $\ve_u^R$ and $\ve_o^R$ are similar.

 To complete the proof of Theorem~\ref{thm:spike}, we must evaluate the following limit as $\ve \to 0$,
\begin{equation}
\label{eq:final limit}
\frac{1}{\mu(z-\eps, z+\eps)}\frac{-1}n \log\mu\left(\left\{x\in I: f^i(x)\notin (z-\eps, z+\eps), \ i=1, \ldots, n\right\}\right) \, .
\end{equation}
We estimate this from below by
$$\frac{\mu(z-\eps_u^L, z+\eps_u^R)}{\mu(z-\eps, z+\eps)} \frac{1}{\mu(z-\eps_u^L, z+\eps_u^R)}\frac{-1}n \log\mu\left(\left\{x\in I: f^i(x)\notin (z-\eps_u^L, z+\eps_u^L), \ i=1, \ldots, n\right\}\right).$$
By the above estimates, 
$\frac{\mu(z-\eps_u^L, z+\eps_u^R)}{\mu(z-\eps, z+\eps)} \sim (1- 2\beta \lambda_z)^{\frac1\ell}$, 
so taking the limit as $\ve \to 0$ yields an lower bound of
\[
( 1 - 2\beta \lambda_z)^{\frac1\ell} \cdot \left( 1 - \lambda_z^{-1/\ell} \right) \, ,
\] 
where the second factor comes from the application of Theorem~\ref{thm:spike} to $\beta$-allowable holes
via \eqref{eq:limit split}.  Similarly, one obtains an upper bound for \eqref{eq:final limit} of
$( 1 + 2\beta \lambda_z)^{\frac1\ell} \cdot \left( 1 - \lambda_z^{-1/\ell} \right)$.
Since these bounds hold for all sufficiently small $\beta$, we take $\beta \to 0$ to obtain the required
limit for Theorem~\ref{thm:spike} along an arbitrary sequence $( \ve_n )_n$
in the case that $z \in \mbox{orb}(f(c))$ is periodic.

Finally notice that if $z\in \mbox{orb}(f(c))$ is preperiodic then the above calculations all go through similarly: from the construction of $(Y, F)$ in Section~\ref{ssec:ratiopreper}, the periodic structure of the postcritical orbit can be pulled back to $z$ to generate the $(a_i)_i, \ (b_i)_i$ required, but our bounds are of the form $( 1 \pm 2\beta \lambda)$ where $\lambda = |Df^p(f^{k_0}(z))|$.
These tend to 1 as $\beta \to 0$, yielding the required limit in the case that $z$ is preperiodic, but not periodic.


\section{Proof of Theorem~\ref{thm:HTS}}
\label{sec:HTS}

In this section, we prove the cases of Theorem~\ref{thm:HTS} in several steps.
First, we address the case where $z \in \mbox{orb}(f(c))$.
As in the proof
of Theorem~\ref{thm:spike}, we first fix $\beta >0$ and consider sequences of holes
$H_{\be}$ that are $\beta$-allowable. 
Leveraging the existence of the induced map $F_0$ and the local
escape rate proved in Theorem~\ref{thm:spike}, we prove Theorem~\ref{thm:HTS} for sequences
of such holes in Section~\ref{sec:HTS beta}.  Then in Section~\ref{sec:beta approx two},
we will let $\beta \to 0$ to prove the required hitting time statistics for all sequences
$\ve \to 0$.

Finally, in Section~\ref{ssec:veryend} we show how to adapt the results of
\cite{BruDemTod18} to prove the hitting time statistics when $z \notin \mbox{orb}(f(c))$.


\subsection{A non-Markovian tower with a hole}
\label{sec:nonMarkov tower}

Throughout Sections~\ref{sec:nonMarkov tower} -- \ref{sec:beta approx two} we will assume
$z \in \mbox{orb}(f(c))$.

Recall the induced map $F_0^{n_0}$, where $n_0$ is chosen so that
\eqref{eq:n0} holds.  
Indeed, by the proof of Proposition~\ref{prop:scheme} (see also
Lemma~\ref{lem:setup}), there exists $\gamma>0$ such that
\begin{equation}
\label{eq:gamma}
\inf_{x \in Y} |Df^{\tau^{n_0}(x)}(x)| \, e^{-\gamma \tau^{n_0}(x)} > 2.5 \, , 
\end{equation}
where $\tau^{n_0} = \sum_{k = 0}^{n_0-1} \tau \circ F_0^k$.

Using $F_0^{n_0}$, we define an extended system that resembles a Young tower as follows.
Define
\[
\Delta = \{ (y, k) \in Y \times \mathbb{N} : k \le \tau^{n_0} -1 \} .
\]
We refer to the $k$th level of the tower as $\Delta_k = \{ (y, n) \in \Delta : n = k \}$.  Sometimes
we refer to $(x, k) \in \Delta_k$ as $x$ if $k$ is clear from context.

The dynamics on the tower is defined by $f_\Delta(y,k) = (y,k+1)$ if $k < \tau^{n_0} -1$ and
$f_\Delta(y,\tau^{n_0}(y) -1) = (F_0^{n_0}(y), 0)$.
There is a natural projection $\pi_\Delta : \Delta \to I$ which satisfies $f \circ \pi_\Delta = \pi_\Delta \circ f_\Delta$.
Clearly, $\pi_\Delta(\Delta_0) = Y$. 
Remark that $F_0$, $f_\Delta$ and $\Delta$ depend on $z$, but not on $\be$.

It follows from Lemma~\ref{lem:Y}(c) and the fact that $f$ is locally eventually onto that $f_\Delta$
is mixing.  Indeed, $f(Y) \supset Y$ and the chain structure around $z$ implies $f^{k_0+p}(Y) \supset I$.
Then since $f$ is locally eventually onto, for any interval $A \subset Y$, there exists 
$n_A \in \mathbb{N}$ such that $f^{n_A}(A) \supset Y$, and again by Lemma~\ref{lem:Y}(c),
$f^{n_A+k}(A) \supset Y$ for all $k \ge 0$.  Setting $A_0 = (\pi_\Delta|_{\Delta_0})^{-1}(A)$, this implies
$f_\Delta^{n_A+k}(A_0) \supset \cup_{j=0}^k \Delta_j$ for each $k \ge 0$.  Thus $f_\Delta$ is
topological mixing.\footnote{Indeed, the above argument also implies that there exist $x, y \in A_0$ such that
$\tau^{n_0}(x) = n_A$ and $\tau^{n_0}(y) = n_A + 1$ so g.c.d.$(\tau^{n_0})=1$, yet this is not a sufficient
condition for $f_\Delta$ to be mixing since our tower has multiple bases.}

We lift Lebesgue measure to $\Delta$ simply by defining 
$m_\Delta|_{\Delta_0} = m|_Y$ and $m_\Delta|_{\Delta_k} = (f_\Delta^k)_*(m_\Delta|_{\Delta_0})$.
Note that by Proposition~\ref{prop:scheme}, $m_{\Delta}(\tau^{n_0} > n) \le C e^{- \zeta n}$,
where $\zeta = \log \lambda_{\text per} - \eta > 0$ and $\eta$ is chosen before 
Remark~\ref{rmk:cover}.  Thus our tower has exponential tails. 

Now recall $\alpha \in (0, \infty)$ from the statement of Theorem~\ref{thm:HTS}.  This will determine the
scaling at which we consider the hitting time to $H_{\be}(z)$.
We reduce $\gamma >0$ in \eqref{eq:gamma} if necessary so that
\begin{equation}
\label{eq:gamma two}
\gamma < \zeta \; \mbox{ and } \; \gamma/\zeta < \alpha \, .
\end{equation}
Note that the second condition is relevant only if we consider $\alpha < 1$.

Given a hole $H_{\be}(z)$, define $H_{\Delta_k}(\be) = (\pi_\Delta|_{\Delta_k})^{-1}(H_{\be}(z)$ and
$H_\Delta(\be) = \cup_{k \ge 1} H_{\Delta_k}$.  The corresponding punctured tower is defined analogously to
\eqref{eq:define open} for $n \ge 1$,
\[
\hf_{\Delta, \be}^n = f_{\Delta}^n|_{\hDelta_{\be}^n}, \mbox{ where } \hDelta_{\be}^n := \cap_{i=0}^{n-1} f_\Delta^{-i}(\Delta \setminus H_{\Delta}(\be)) \, .
\]

The main difference between $f_\Delta : \Delta \circlearrowleft$ and the usual notion of a Young
tower is that we do not define a Markov structure on $\Delta$.
Yet as demonstrated above, the tower has the following key properties:
\begin{itemize}
  \item Exponential tails: $m_{\Delta}(\tau^{n_0}> n) \le C e^{-\zeta n}$;
  \item $f_\Delta$ is topologically mixing; 
    \item Large images at returns to $\Delta_0$: if $H_{\be}(z)$ is $\beta$-allowable, then $C_\beta>0$ from Lemma~\ref{lem:beta} provides a lower bound on the length of images in returns to $\Delta_0$ under both $f_\Delta$ and $\hf_{\Delta, \be}$. 
\end{itemize}
We will use these properties to prove the existence of a uniform spectral gap for the punctured transfer operators on the 
tower acting on a space 
of functions of weighted bounded variation, as follows.

 The inducing domain $Y$ is comprised of finitely many maximal connected components that
 are intervals in $I$.  Let $\widetilde{\cD}_0$ denote this collection of intervals.  Similarly, for each $k \ge 1$,
 $f^k(Y \cap \{ \tau^{n_0} \ge k+1 \})$ can be decomposed into a set of maximal connected components.
 We further subdivide these intervals at the first time $j \le k$ that they contain 
 a point in orb$(f(c))$, $z - \ve_L$ or
 $z + \ve_R$.  These are the cuts introduced in the definition of $F_\ve$ in 
 Section~\ref{ssec:Markov}.
 Let $\widetilde{\cD}_k$ denote this collection of intervals and define the collection of lifts by,
 \[
 \cD_k = \{ J = (\pi|_{\Delta_k})^{-1}(\tilde J) : \tilde J \in \widetilde{\cD}_k \} \, , \mbox{ for all $k \ge 0$}.
 \]
 
For an interval $J \in \cD_k$ and a measurable function $\psi: \Delta \to \mathbb{R}$, define 
$\bigvee_J \psi$, the variation of $\psi$ on $J$ as in \eqref{eq:var def}.  On each level
$\Delta_k$, $k \ge 1$, define
\[
\| \psi \|_{\Delta_k} = e^{-\gamma k} \sum_{J \in \cD_k} \bigvee_J \psi + e^{-\gamma k} \sup_{\Delta_k} |\psi| \, .
\]
Finally, define the weighted variation norm on $\Delta$ by,
\[
\| \psi \|_{WV} = \sum_{k \ge 0} \| \psi \|_{\Delta_k} \, .
\]
If $\| \psi \|_{WV} < \infty$ then $| \psi |_{L^1(m_\Delta)} < \infty$ since $\gamma < \zeta$ by \eqref{eq:gamma two}.
So we denote by $WV(\Delta)$ the set of functions $\psi \in L^1(m_\Delta)$ such that 
$\| \psi \|_{WV} < \infty$.  Since we consider $WV(\Delta)$ as a subset of $L^1(m_\Delta)$,
we define the variation norm of the equivalence class to be the infimum of variation norms
of functions in the equivalence class. 

We define the transfer operator $\Lp_\Delta$ corresponding to $f_\Delta$ acting on $L^1(m_\Delta)$ in the natural way,
\[
\Lp_\Delta \psi(x) = \sum_{y \in f_\Delta^{-1}(x)} \frac{f(y)}{Jf_\Delta(y)} \, , 
\]
where $Jf_\Delta$ is the Jacobian of $f_\Delta$ with respect to $m_\Delta$.
Then the transfer operator for the punctured tower is defined for each $n \ge 1$ by,
\[
\hLp_{\Delta, \be}^n \psi = \Lp_\Delta^n (1_{\hDelta_{\be}^n} \psi) \, .
\]

Our goal is to prove the following proposition, which is the analogue of Theorem~\ref{thm:gap}, but for the tower
map rather than the induced map.

\begin{proposition}
\label{prop:tower gap}
For any $\beta>0$, there exists $\ve_\beta(\Delta)$ such that for any $\beta$-allowable hole $H_{\be}(z)$ with
$\be < \ve_\beta(\Delta)$, both $\Lp_\Delta$ and  $\hLp_{\Delta, \be}$ have a spectral gap on $WV(\Delta)$.

Specifically, there exist $\sigma_\beta, A_\beta >0$ such that for all $\beta$-allowable holes $H_{\be}$ with 
$\be < \ve_\beta(\Delta)$,
the spectral radius of $\hLp_{\Delta, \be}$ is $\lambda_{\Delta, \be} < 1$ and there exist operators
$\Pi_{\Delta, \be}, \cR_{\Delta, \be} : WV(\Delta) \circlearrowleft$ satisfying
$\Pi_{\Delta, \be}^2 = \Pi_{\Delta, \be}$, $\Pi_{\Delta, \be} \cR_{\Delta, \be} = \cR_{\Delta, \be} \Pi_{\Delta, \be} = 0$,
and $\| \cR_{\Delta, \be}^n \|_{WV} \le A_\beta \lambda_{\Delta, \be}^n e^{-\sigma_\beta n}$ such that
\[
\hLp_{\Delta, \be} \psi = \lambda_{\Delta, \be} \Pi_{\Delta, \ve} \psi + \cR_{\Delta, \be} \psi\, ,
\mbox{ for all $\psi \in WV(\Delta)$.}
\]
Indeed, $\Pi_{\Delta, \be} = \he_{\Delta, \be} \otimes \hg_{\Delta, \be}$ for some $\he_{\Delta, \be} \in WV(\Delta)^*$
and $\hg_{\Delta, \be} \in WV(\Delta)$ 
satisfying $\hLp_{\Delta, \be} \hg_{\Delta, \be} = \lambda_{\Delta, \be} \hg_{\Delta, \be}$ and
$\int_{\Delta} \hg_{\Delta, \be} \, dm_\Delta = 1$.
\end{proposition}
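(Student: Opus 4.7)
The plan is to mimic the strategy used for the induced operator in Theorem~\ref{thm:gap}, but now adapted to the non-Markovian tower $(\Delta, f_\Delta)$ via the weighted norm $\|\cdot\|_{WV}$; the weight $e^{-\gamma k}$ is specifically designed so that motion up the tower (which is isometric with respect to $m_\Delta$) produces a definite contraction, while returns to the base are controlled by the Lasota-Yorke estimates already proved for the induced map.

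First I would establish a uniform Lasota-Yorke inequality: for some fixed $\alpha < 1$ and $B_\beta > 0$, every $\beta$-allowable hole $H_{\be}$ with $\be$ small enough satisfies
\[
\|\hLp_{\Delta,\be}^{n_0} \psi\|_{WV} \;\le\; \alpha \|\psi\|_{WV} + B_\beta \, |\psi|_{L^1(m_\Delta)}, \qquad \psi \in WV(\Delta).
\]
To prove it I would split $\hLp_{\Delta,\be} \psi$ into (i) the piece moving upward within a column, for which $Jf_\Delta = 1$ and the weight gains a factor $e^{-\gamma}$ per level, and (ii) the piece returning from a top level of a column to $\Delta_0$, where the expansion $|DF_0^{n_0}| \ge 2.5 \, e^{\gamma\tau^{n_0}}$ from \eqref{eq:gamma} feeds into the argument of Proposition~\ref{prop:LY} to produce the usual $\tfrac{2}{3}\bigvee + C|\psi|_{L^1}$ bound on $\Delta_0$. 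The boundary terms on $\Delta_0$ are controlled using the large-image estimate of Lemma~\ref{lem:beta} exactly as in \eqref{eq:large image}; this is where $\beta$-allowability is essential. The exponential tail bound $m_\Delta(\tau^{n_0}>n) \le C e^{-\zeta n}$ together with $\gamma < \zeta$ from \eqref{eq:gamma two} ensures the weighted sum defining $\|\cdot\|_{WV}$ converges after one iterate.

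Next I would observe that the inclusion $WV(\Delta) \hookrightarrow L^1(m_\Delta)$ is compact (by a level-by-level Helly argument, using $\gamma<\zeta$ to truncate the tail), and combine with the Lasota-Yorke inequality to conclude quasi-compactness of both $\Lp_\Delta$ and $\hLp_{\Delta,\be}$ on $WV(\Delta)$. For the unpunctured operator $\Lp_\Delta$, the leading eigenfunction $g_\Delta$ is obtained by lifting $g_0$ from Theorem~\ref{thm:gap} up the tower, and simplicity of the leading eigenvalue plus a spectral gap follows from the topological mixing of $f_\Delta$ established after the definition of $\Delta$ (via a covering argument in the spirit of \cite{LSV}). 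For $\hLp_{\Delta,\be}$ I would then apply Keller-Liverani \cite{KL pert}: the analogue of Lemma~\ref{lem:small pert} gives
\[
||| \Lp_\Delta - \hLp_{\Delta,\be} |||_{WV \to L^1} \;\le\; m_\Delta(H_\Delta(\be)) \xrightarrow[\be\to 0]{} 0,
\]
yielding the claimed decomposition $\hLp_{\Delta,\be} = \lambda_{\Delta,\be} \Pi_{\Delta,\be} + \cR_{\Delta,\be}$ with uniform contraction rate $\sigma_\beta$ on the remainder, together with $\hg_{\Delta,\be}\in WV(\Delta)$ and $\he_{\Delta,\be}\in WV(\Delta)^*$.

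The hard step is the Lasota-Yorke inequality. The tower is not Markovian, because returns $\hF_{\be}^{n_0}$ create cuts both at points of $\mathrm{orb}(f(c))$ and at $z\pm\ve_L, z\pm\ve_R$, so the image partition at $\Delta_0$ after one return is only finitely refined compared with $\widetilde\cD_0$, and the boundary contribution $\sum_i \big(|\psi(u_i)|+|\psi(v_i)|\big)/|DF_{\be}^{n_0}(u_i)|$ must be shown to be absorbed into $\alpha\|\psi\|_{WV} + B_\beta |\psi|_{L^1}$ with $\alpha<1$ uniformly in $\be$. This is exactly where the large-image lower bound $C_\beta$ of Lemma~\ref{lem:beta} enters, and it is the reason that $\ve_\beta(\Delta)$ and the constants $A_\beta, \sigma_\beta$ depend on $\beta$ but not on the particular $\beta$-allowable $\be$.
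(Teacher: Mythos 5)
Your outline of the Lasota-Yorke estimate matches the paper's Lemma~\ref{lem:tower LY} (split into the upward-moving part where $Jf_\Delta=1$ and the weight contracts by $e^{-\gamma}$, and the returns to $\Delta_0$ where \eqref{eq:gamma} and Lemma~\ref{lem:beta} give the $\tfrac 45\|\cdot\|_{WV}+C|\cdot|_{L^1}$ bound), and your compactness argument matches Lemma~\ref{lem:compact}. However, there are two genuine gaps.

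First, your bound on the perturbation is wrong. You assert
$||| \Lp_\Delta - \hLp_{\Delta,\be} |||_{WV \to L^1} \le m_\Delta(H_\Delta(\be))$,
but this would require $|\psi|_{L^\infty(\Delta)} \le \|\psi\|_{WV}$, which is false: the $WV$-norm only gives $\sup_{\Delta_k}|\psi| \le e^{\gamma k}\|\psi\|_{WV}$, so the correct estimate is
\[
\int_\Delta |(\Lp_\Delta - \hLp_{\Delta,\be})\psi|\, dm_\Delta \le \int_{H_\Delta} |\psi|\,dm_\Delta \le \|\psi\|_{WV} \sum_{k\ge 1} e^{\gamma k}\, m_\Delta(H_\Delta \cap \Delta_k).
\]
Because of the factor $e^{\gamma k}$, this sum is not trivially bounded by $m_\Delta(H_\Delta)$; one must split the sum at $k\approx -\zeta^{-1}\log\mu(H_{\be}(z))$, use the exponential tails $m_\Delta(\tau^{n_0}>n)\le Ce^{-\zeta n}$ for large $k$, and the comparability of $\mu_\Delta$ and $m_\Delta$ for small $k$. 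This is exactly Lemma~\ref{lem:Delta pert}, and it yields only the H\"older rate $O(\mu(H_{\be}(z))^{1-\gamma/\zeta})$. Your statement overlooks the weighting and the role of $\gamma<\zeta$ entirely.

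Second, your treatment of the spectral gap for the unpunctured operator $\Lp_\Delta$ on $WV(\Delta)$ is too cavalier. You say it follows ``from the topological mixing of $f_\Delta$ (via a covering argument in the spirit of \cite{LSV}),'' but the whole point of this construction is that the tower is \emph{not} Markovian, so the LSV covering framework does not directly apply. The paper's Lemma~\ref{lem:spectrum LDelta} has to work substantially harder: it uses quasi-compactness to get a finite-dimensional peripheral decomposition, identifies the lifted density $g_\Delta$ and shows the eigenvalue $1$ is simple by projecting down to $\Lp_0$ on $BV(Y)$, then rules out other peripheral eigenvalues by first proving (via Rota's theorem/Schaefer) that the peripheral spectrum of $\Lp_\Delta$ on $WV(\Delta)$ coincides with its cyclic peripheral spectrum on $L^1(m_\Delta)$, and finally runs a positivity bootstrap (the deformation $\psi_s = s\kappa\,\mathrm{Re}(h)+g_\Delta$ together with topological mixing and the lower bound \eqref{eq:min on Delta}) to force $\mathrm{Re}(h)\equiv\mathrm{Im}(h)\equiv 0$. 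Without these steps — in particular the cyclicity claim and the lower bound on $g_\Delta$ — the passage from quasi-compactness plus topological mixing to a spectral gap is not justified in the weighted BV space.
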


The proof of this proposition is based on the following sequence of lemmas, which prove
the compactness of $WV(\Delta)$ in $L^1(m_\Delta)$, uniform Lasota-Yorke inequalities
for $\hLp_{\Delta, \be}$, and the smallness of the perturbation when viewing
$\Lp_{\Delta} - \hLp_{\Delta, \be}$ as an operator from $WV(\Delta)$ to $L^1(m_\Delta)$.

\begin{lemma}
\label{lem:compact}
The unit ball of $WV(\Delta)$ is compactly embedded in $L^1(m_\Delta)$.
\end{lemma}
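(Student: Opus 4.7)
The plan is to combine the classical compact embedding of $BV$ into $L^1$ on a bounded interval with a two-scale tail estimate that exploits the gap $\gamma<\zeta$ between the weight $e^{-\gamma k}$ in the $WV$-norm and the exponential decay $m_\Delta(\Delta_k)\le Ce^{-\zeta k}$ of the tower's levels. First, I would record the pointwise consequences of $\|\psi\|_{WV}\le 1$: for every $k\ge 0$ one has $\sup_{\Delta_k}|\psi|\le e^{\gamma k}$, $\sum_{J\in\cD_k}\bigvee_J\psi\le e^{\gamma k}$ (so in particular $\bigvee_J\psi\le e^{\gamma k}$ for each individual $J\in\cD_k$), and consequently $|\psi|_{L^1(m_\Delta|_{\Delta_k})}\le e^{\gamma k}m_\Delta(\Delta_k)\le Ce^{(\gamma-\zeta)k}$, which is summable in $k$.

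Next, given a bounded sequence $\{\psi_n\}\subset WV(\Delta)$, I would use a diagonal argument to extract a subsequence that converges in $L^1(m|_J)$ for every single $J\in\bigcup_{k\ge 0}\cD_k$. The countability of $\bigcup_k\cD_k$ makes the diagonal extraction possible, and on each interval $J$ the restrictions $\psi_n|_J$ are uniformly bounded in both sup-norm and variation, so they are precompact in $L^1(m|_J)$ by the classical Helly selection theorem.

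The heart of the argument is upgrading this interval-wise convergence to $L^1(m_\Delta)$-Cauchy behaviour. Fix $\eps>0$. First choose $K$ large enough that $\sum_{k\ge K}Ce^{(\gamma-\zeta)k}<\eps/3$, which handles the cross-level tail uniformly in $n$ via the $L^1$-bound above. Second, for each of the finitely many levels $k<K$, use $\sum_{J\in\cD_k}m_\Delta(J)<\infty$ to select a finite subfamily $\cD_k^\star\subset\cD_k$ with $m_\Delta(\Delta_k\setminus\bigcup\cD_k^\star)<\eps/(6Ke^{\gamma k})$; the uniform sup-bound $e^{\gamma k}$ then controls the contribution of the within-level tail by $\eps/(3K)$ per level. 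The remaining integral over $\bigcup_{k<K}\bigcup\cD_k^\star$, a union of finitely many intervals, is made smaller than $\eps/3$ along the diagonal subsequence for large indices.

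The main technical obstacle is that each $\cD_k$ may be countably infinite, so one cannot treat $\psi|_{\Delta_k}$ as a single $BV$ function on a bounded interval. This is precisely why the second step of the tail estimate is needed: one must split each level into a finite controlled part and a small-measure remainder, the latter absorbed via the sup bound. Everything else (countable diagonalisation, Helly on each interval, summability of the across-level tail from $\gamma<\zeta$ guaranteed in \eqref{eq:gamma two}) is routine, and the limit function lies in $L^1(m_\Delta)$, as required for the embedding $WV(\Delta)\hookrightarrow L^1(m_\Delta)$ to be compact.
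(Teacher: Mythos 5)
Your proof is correct and follows essentially the same route as the paper: per-interval $BV$ bounds from the weighted norm, a diagonalization over the countably many $J\in\bigcup_k\cD_k$ together with the classical compactness of bounded $BV$ sets in $L^1$, and a tail estimate exploiting $\gamma<\zeta$ to upgrade interval-wise convergence to $L^1(m_\Delta)$. In fact you are somewhat more explicit than the paper on the final step: the published proof only invokes dominated convergence to conclude $\psi\in L^1(m_\Delta)$ and leaves the $L^1$-Cauchy verification (including the within-level finite truncation of $\cD_k$ needed when $\cD_k$ is infinite) implicit, whereas you spell it out.
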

\begin{proof}
If $\| \psi \|_{WV} \le 1$, then restricted to each $J \in \cD_k$, 
the variation of $\psi$ is at most $e^{\gamma k}$ and $|\psi|_J|_\infty \le e^{\gamma k}$.  
Thus if $B_1$ is the ball of radius 1
in $WV(\Delta)$, then $B_1|_J$ is compactly embedded in $L^1(m_{\Delta}|_J)$.

Taking a sequence $(\psi_n)_n \subset B_1$, we first enumerate the elements of 
$\cup_{k \ge 0} \cD_k$
and then use compactness on each $J$ and a diagonalization procedure to extract a subsequence 
$(\psi_{n_k})_k$ which converges on every $J \in \cup_{k \ge 0} \cD_k$
to a function $\psi$.  $\psi$ necessarily belongs to $L^1(m_\Delta)$ due to dominated convergence
since $|\psi_n|_{\Delta_k}|_\infty \le e^{\gamma k}$ and the function which is equal to $e^{\gamma k}$ on 
$\Delta_k$ for each $k$ is integrable
since $\gamma < \zeta$.
\end{proof}

\begin{lemma}
\label{lem:tower LY}
Assume $H_{\be}(z)$ is $\beta$-allowable.
Let $C = (1+C_d)(C_d + 2C_\beta^{-1})$.  For $k \ge 1$, and all $\psi \in WV(Y)$,
\begin{eqnarray}
\| \hLp_{\Delta, \be} \psi \|_{\Delta_k} & \le & e^{-\gamma } \| \psi \|_{\Delta_{k-1}} \label{eq:k LY} \\
\bigvee_{\Delta_0} \hLp_{\Delta, \be} \psi & \le & \tfrac{4}{5} \| \psi \|_{WV} + C \int_{\hDelta^1_{\be}} |\psi| dm_{\Delta}
\label{eq:0 var} \\
| \hLp_{\Delta, \be} \psi |_{L^\infty(\Delta_0)} & \le & \tfrac{2}{5} \| \psi \|_{WV} + C_\beta^{-1}(1+C_d) \int_{\hDelta_{\be}^1} |\psi|
\label{eq:0 infty} \\
\int_{\Delta} |\hLp_{\Delta, \be}^n \psi| | \, dm_\Delta & \le & \int_{\hDelta_{\be}^n} |\psi | \, dm_\Delta
\; \mbox{ for all $n \ge 1$.}
\label{eq:L1 Delta}
\end{eqnarray}
\end{lemma}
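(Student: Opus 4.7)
The four inequalities split naturally. \eqref{eq:k LY} and \eqref{eq:L1 Delta} reflect the trivial dynamical structure of $f_\Delta$ on upper levels of the tower, while \eqref{eq:0 var} and \eqref{eq:0 infty} are the tower analogues, at the base $\Delta_0$, of the Lasota--Yorke estimate already proved in Proposition~\ref{prop:LY}. I plan to treat them in that order.

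For \eqref{eq:L1 Delta}, the measure $m_\Delta$ is conformal for $\Lp_\Delta$ by construction, since $m_\Delta|_{\Delta_k}=(f_\Delta^k)_*(m_\Delta|_{\Delta_0})$, so
\[
\int_\Delta |\hLp_{\Delta, \be}^n \psi|\, dm_\Delta \le \int_\Delta \Lp_\Delta^n (1_{\hDelta_{\be}^n}|\psi|) \, dm_\Delta = \int_{\hDelta_{\be}^n} |\psi| \, dm_\Delta.
\]
For \eqref{eq:k LY}, the key observation is that on $\Delta_{k-1}$ the map $f_\Delta$ is the identity in the first coordinate with unit Jacobian, so $\hLp_{\Delta, \be} \psi(y,k) = \psi(y, k-1)\cdot 1_{(y,k-1)\notin H_{\Delta_{k-1}}(\be)}$. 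Each $J \in \cD_k$ sits inside some $J' \in \cD_{k-1}$ (the level-$k$ partition refines the pullback of the level-$(k-1)$ partition only by adding cuts at preimages of points in $\orb(f(c))\cup\{z-\ve_L,z+\ve_R\}$), and puncturing only decreases variation and supremum on each piece. Summing yields $\sum_{J \in \cD_k} \bigvee_J \hLp_{\Delta, \be}\psi \le \sum_{J' \in \cD_{k-1}} \bigvee_{J'} \psi$ and the analogous estimate for the sup-term; the ratio $e^{-\gamma k}/e^{-\gamma(k-1)} = e^{-\gamma}$ then gives the required contraction factor.

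For \eqref{eq:0 var} and \eqref{eq:0 infty}, I will mimic the proof of Proposition~\ref{prop:LY}. At $x \in \Delta_0$, $\hLp_{\Delta, \be} \psi(x)$ is a sum over preimages at tops of columns, indexed by one-cylinders $I_i$ of $F_0^{n_0}$ with return time $\tau^{n_0}|_{I_i} = k+1$; the corresponding top interval $I_i^{\mathrm{top}} \subset \Delta_k$ is by construction a single element of $\cD_k$. The essential ingredient is \eqref{eq:gamma}, engineered so that $|DF_0^{n_0}(y)|^{-1} e^{\gamma \tau^{n_0}(y)} < 2/5$ for every $y \in Y$, playing the role in the weighted norm that $|DF_\be^{n_0}|^{-1} < 1/3$ played in Proposition~\ref{prop:LY}. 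Splitting $\bigvee_{\Delta_0}\hLp_{\Delta, \be}\psi$ as in \eqref{eq:var split}, each 1-cylinder contributes a variation-type term bounded by $|DF_0^{n_0}(y)|^{-1}\bigvee_{I_i^{\mathrm{top}}}\psi$ plus two endpoint terms. Using $\bigvee_{I_i^{\mathrm{top}}}\psi \le e^{\gamma k}\|\psi\|_{\Delta_k}$, each such contribution is at most $\tfrac{e^{-\gamma}}{2.5}\|\psi\|_{\Delta_k}$; summing over $i$ with $\tau^{n_0}|_{I_i}=k+1$ and then over $k\ge 0$ telescopes into at most $\tfrac{2}{5}\|\psi\|_{WV}$. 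The variation part of the endpoint bound \eqref{eq:sum} contributes an identical $\tfrac{2}{5}\|\psi\|_{WV}$, summing to $\tfrac{4}{5}\|\psi\|_{WV}$ in \eqref{eq:0 var}; the $L^1$ remainder is produced exactly as in \eqref{eq:sum}--\eqref{eq:large image}, using bounded distortion (Proposition~\ref{prop:scheme}(b)) for the Riemann-sum term and the large-image bound of Lemma~\ref{lem:beta} for the endpoint-infimum term, yielding the coefficient $C = (1+C_d)(C_d + 2 C_\beta^{-1})$. The pointwise bound \eqref{eq:0 infty} uses the same splitting but keeps only the single contraction term $\tfrac{2}{5}\|\psi\|_{WV}$ together with the endpoint-infimum contribution $C_\beta^{-1}(1+C_d)\int_{\hDelta_\be^1}|\psi|\,dm_\Delta$.

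The main technical obstacle is the bookkeeping: verifying that each $I_i^{\mathrm{top}}$ is a single interval of $\cD_k$ (so that $\bigvee_{I_i^{\mathrm{top}}}\psi$ is controlled by $\|\psi\|_{\Delta_k}$), and reconciling the pushforward measure $m_\Delta$ on $I_i^{\mathrm{top}}$ with the bounded-distortion estimate for $F_0^{n_0}$ when converting local infima to $L^1$ integrals on the tower. Once this is in place, the entire argument becomes a weighted transcription of the Lasota--Yorke estimate for the induced map, with the contraction factor $1/3$ replaced by $\tfrac{e^{-\gamma}}{2.5} < 2/5$ to absorb the exponential weights $e^{-\gamma k}$ built into the norm $\|\cdot\|_{WV}$.
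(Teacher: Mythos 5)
Your proposal is correct and follows essentially the same approach as the paper: conformality of $m_\Delta$ for \eqref{eq:L1 Delta}, unit Jacobian plus $\cD_k$-compatibility of the puncture for \eqref{eq:k LY}, and a weighted transcription of the Lasota--Yorke estimate from Proposition~\ref{prop:LY} for \eqref{eq:0 var}--\eqref{eq:0 infty}, with \eqref{eq:gamma} supplying the contraction factor $\tfrac{1}{2.5} = \tfrac{2}{5}$ once the weights $e^{\gamma k}$ are absorbed. One small bookkeeping slip: the per-cylinder bound $\tfrac{e^{-\gamma}}{2.5}\|\psi\|_{\Delta_k}$ cannot be summed over the countably many $I_i^{\mathrm{top}}$ at level $k$; one must instead sum the variations $e^{-\gamma(k+1)}\bigvee_{I_{i,k}}\psi$ over disjoint $I_{i,k}\subset J'\in\cD_k$ to get $\le \tfrac{e^{-\gamma}}{2.5}\|\psi\|_{\Delta_k}$ at each level before summing over $k$ (also, each $I_i^{\mathrm{top}}$ is only contained in, not equal to, an element of $\cD_k$), but this is exactly what the paper does and the intended argument is unambiguous.
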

The same estimates hold for $\Lp_\Delta$ with $\hDelta_{\be}^n$ replaced by $\Delta$.

\begin{proof}
Note that $Jf_\Delta(x, k) = 1$ if $k < \tau^{n_0}(x)-1$.  Thus if $k \ge 1$ and
$x \in \Delta_k$, then $\hLp_{\Delta, \be} \psi(x) = (1_{\hDelta^1_{\be}} \psi )\circ f_{\Delta}^{-1}(x)$.  

Moreover, for $J \in \cD_k$ and $k \ge 1$, we have 
$f_\Delta^{-1}(J) \subset J' \in \cD_{k-1}$, and by definition of $\cD_{k-1}$, $J'$ is either
disjoint from $H_\Delta$ or contained in it.  Thus $1_{\hDelta^1_{\be}}$ is either
identically 0 or 1 on $J'$ and so does not affect the variation.
With these points noted, \eqref{eq:k LY} holds trivially.
Similarly, \eqref{eq:L1 Delta} follows immediately from the definition of $\hLp_{\Delta, \be}$.

We proceed to prove \eqref{eq:0 var}.  The proof follows closely the proof of Proposition~\ref{prop:LY}.
As in Section~\ref{sec:uniform gap}, denote by $\cJ_{\be}$
the finite set of intervals in $Y$ that are the images of the one-cylinders
for $F_{\be}^{n_0}$.  Since we identify $\Delta_0$ with $Y$, we will abuse notation and
again denote the finite set of images $(\pi_\Delta|_{\Delta_0})^{-1}(\cJ_{\be})$ in $\Delta_0$
by simply $\cJ_{\be}$.  

Let $\cI_{\be}$ denote the countable collection of intervals in
$f_{\Delta}^{-1}(\Delta_0)$ so that for each $I_i \in \cI_{\be}$, $f_\Delta(I_i) = J$ for some
$J \in \cJ_{\be}$. 
We use the notation $I_i = (u_i, v_i)$, and $I_{i,k}$ denotes the intervals
in $\cI_{\be}$ that lie in $\Delta_k$.
Remark that each $I_{i,k} \subset J' \in \cD_k$ so that $I_{i,k}$ is either in $H_\Delta$ or
is disjoint from it.\footnote{Indeed, since a neighbourhood of $z$ cannot enter $Y$ in one step,
every $I_i \in \cI_{\be}$ is disjoint from $H_\Delta$ in this particular tower.}
Also, for $(x,k) \in I_{i,k}$, by definition $\tau^{n_0}(\pi_\Delta(x,0)) = k$, so that
\begin{equation}
\label{eq:translate}
Jf_\Delta(x,k) = |DF^{n_0}(\pi_{\Delta}(x,0))| = |Df^{\tau^{n_0}(x)}(x)| = |Df^k(x)| \, ,
\end{equation}
where for brevity, we denote $\pi_\Delta(x,0) = x$.

Now for $\psi \in WV(\Delta)$, following \eqref{eq:var split}, we obtain,
\begin{equation}
\label{eq:var split Delta}
\bigvee_{\Delta_0} \hLp_{\Delta, \be} \psi 
\le \sum_{I_i \in \cI_{\be}} \bigvee_{I_i} \frac{\psi}{Jf_\Delta}
+ \sum_{I_i \in \cI_{\be}}  \frac{|\psi(u_i)|}{Jf_\Delta(u_i)} + \frac{|\psi(v_i)|}{Jf_\Delta(v_i)} \, .
\end{equation}
For $I_i = I_{i,k} \subset \Delta_k$, using \eqref{eq:gamma} and \eqref{eq:translate}, we estimate the first term above as in \eqref{eq:first var term},
\[
\bigvee_{I_{i,k}} \frac{\psi}{Jf_\Delta} \le \frac{1}{2.5} \bigvee_{I_{i,k}} \psi e^{-\gamma k}
+ C_d(1+C_d) \int_{I_{i,k}} |\psi| \, .
\]
We estimate the second term in \eqref{eq:var split Delta} using \eqref{eq:sum} and the
large images property as in \eqref{eq:large image},
\begin{equation}
\label{eq:var second term}
\frac{|\psi(u_i)|}{Jf_\Delta(u_i)} + \frac{|\psi(v_i)|}{Jf_\Delta(v_i)}
\le \frac{1}{2.5} \bigvee_{I_{i,k}} \psi e^{-\gamma k}+ 2 C_\beta^{-1} (1+C_d) \int_{I_{i,k}} |\psi| \, dm_\Delta \, .
\end{equation}
Putting these estimates together in \eqref{eq:var split Delta} completes the proof of
\eqref{eq:0 var},
\[
\begin{split}
\bigvee_{\Delta_0} \hLp_{\Delta, \be} \psi & \le \frac{2}{2.5} \sum_{k \ge 1} \sum_{I_{i,k}} e^{-\gamma k} \bigvee_{I_{i,k}} \psi + (1+C_d)(C_d + 2C_\beta^{-1}) \int_{I_{i,k}} |\psi|  \\
& \le \frac{4}{5} \| \psi\|_{WV} + (1+C_d)(C_d + 2C_\beta^{-1}) \int_{\hDelta^1_{\be}} |\psi| \, .
\end{split}
\]

Finally, \eqref{eq:0 infty} follows immediately from \eqref{eq:var second term} since
for $x \in \Delta_0$,
\[
\begin{split}
|\hLp_{\Delta, \be} \psi(x) | & \le \sum_{I_i \in \cI_{\be}, y \in I_i} \frac{|\psi(y)|}{Jf_\Delta(y)}
\le \frac{1}{2.5} \sum_{i,k} \bigvee_{I_{i,k}} \psi e^{-\gamma k} + C_\beta^{-1}(1+C_d) \int_{I_{i,k}} |\psi| \, dm_\Delta \\
& \le \frac{2}{5} \| \psi \|_{WV} + C_\beta^{-1}(1+C_d) \int_{\hDelta^1_{\be}} |\psi| \, dm_{\Delta} \, .
\end{split}
\]
\end{proof}

\begin{lemma}
\label{lem:spectrum LDelta}
The operator $\Lp_\Delta$ has a spectral gap on $WV(\Delta)$.
\end{lemma}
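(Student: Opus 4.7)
The plan is to apply Hennion's theorem to obtain quasi-compactness of $\Lp_\Delta$ on $WV(\Delta)$, and then identify the peripheral spectrum by lifting the $\Lp_0$-fixed density from $Y$ to the tower and invoking topological mixing.

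First I would iterate the inequalities of Lemma~\ref{lem:tower LY}, applied with $\be=0$ so that $\hDelta^n_{\be}$ is replaced throughout by $\Delta$ and $\hLp_{\Delta,\be}$ by $\Lp_\Delta$. The goal is to obtain, for some $N$ sufficiently large, constants $\rho<1$ and $C_N>0$ such that
\[
\|\Lp_\Delta^N \psi\|_{WV} \le \rho\,\|\psi\|_{WV} + C_N\,|\psi|_{L^1(m_\Delta)}\qquad\text{for all }\psi\in WV(\Delta).
\]
Writing $a_j = \|\Lp_\Delta^j\psi\|_{\Delta_0}$ and iterating \eqref{eq:k LY} gives $\|\Lp_\Delta^n\psi\|_{\Delta_k}\le e^{-\gamma k}\,a_{n-k}$ for $k\le n$ and $\le e^{-\gamma n}\|\psi\|_{\Delta_{k-n}}$ for $k>n$. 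Summing over $k$ and substituting \eqref{eq:0 var}--\eqref{eq:0 infty} splits the norm $\|\Lp_\Delta^n\psi\|_{WV}$ into a sum over past times of contributions coming from returns to $\Delta_0$. The key cancellation is that the blow-up factor $e^{\gamma k}$ built into the weighted norm $\|\cdot\|_{\Delta_k}$ is dominated at each return by the tail bound $m_\Delta(\tau^{n_0}=k+1)\le Ce^{-\zeta k}$ from Proposition~\ref{prop:scheme}(c), since $\gamma<\zeta$ by \eqref{eq:gamma two}. Exploiting this cancellation when summing the returns gives a geometric series whose total is bounded independently of $n$, so the resulting multiplier of $\|\psi\|_{WV}$ is $\rho^N$ for some $\rho<1$ once $N$ is large enough.

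Next, Lemma~\ref{lem:compact} supplies the compact embedding $WV(\Delta)\hookrightarrow L^1(m_\Delta)$. Combined with the Lasota-Yorke inequality just established, Hennion's theorem (equivalently the Ionescu-Tulcea--Marinescu theorem) yields quasi-compactness of $\Lp_\Delta$ on $WV(\Delta)$ with essential spectral radius at most $\rho^{1/N}<1$. In particular the spectrum outside this disc consists of finitely many eigenvalues of finite multiplicity.

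Then I would pin down the peripheral spectrum. Applying Theorem~\ref{thm:gap} at $\be=0$ produces $g_0\in BV(Y)$ with $\Lp_0 g_0=g_0$, $\int_Y g_0\,dm=1$ and $\inf_Y g_0>0$ (cf.~\eqref{eq:lower g}). Lift $g_0$ to the tower by $\hat g_0(x,k):=g_0(\pi_\Delta(x,0))$ for $(x,k)\in\Delta_k$. This function lies in $WV(\Delta)$ because $g_0\in BV(Y)$, the partition $\cD_k$ refines the lift of $\cD_0$, and the summability $\sum_k e^{-\gamma k}m_\Delta(\Delta_k)<\infty$ follows from $\gamma<\zeta$. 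A direct calculation using $\Lp_0^{n_0}g_0=g_0$ and the identification of the Jacobian at returns via \eqref{eq:translate} gives $\Lp_\Delta\hat g_0=\hat g_0$, so $1$ is an eigenvalue of $\Lp_\Delta$. To show $1$ is simple and is the only eigenvalue on the unit circle, I would use topological mixing of $f_\Delta$ (established after \eqref{eq:gamma two}) together with the finite-image/large-image property of returns to $\Delta_0$ coming from Lemma~\ref{lem:beta} with $\be=0$, which provides the covering condition required to apply the argument of \cite[Theorem~3.1]{LSV}: any peripheral eigenfunction $\psi$ must satisfy $|\psi|\le C\hat g_0$ and $|\psi|/\hat g_0$ must be $f_\Delta$-invariant, hence constant by mixing, forcing the eigenvalue to be $1$ and the eigenspace one-dimensional.

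The main obstacle is the Lasota-Yorke step: a straightforward iteration of the single-step estimate fails because \eqref{eq:0 var} and \eqref{eq:0 infty} together yield the coefficient $\tfrac{6}{5}+e^{-\gamma}>1$ in front of $\|\psi\|_{WV}$. The delicate point is to exploit that the $\tfrac{6}{5}\|\psi\|_{WV}$ bound actually decomposes as a sum over the levels $k\ge 1$ with weights $e^{-\gamma k}$ confined to the ``top'' intervals $I_{i,k}$, whose total $m_\Delta$-mass decays like $e^{-\zeta k}$; this fine structure is what allows the iterates to contract despite the one-step bound appearing to expand. Making this bookkeeping rigorous, with the strict gap $\gamma<\zeta$ providing the summability, is the only non-routine component of the proof.
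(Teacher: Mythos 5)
Your proposal follows the paper's overall strategy (Lasota--Yorke inequalities plus compact embedding to get quasi-compactness, then lift the $BV(Y)$ fixed density to the tower and use mixing to pin down the peripheral spectrum), but both of your two technical steps diverge from the paper and neither is watertight.

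On quasi-compactness: you are right to flag that the one-step estimates from Lemma~\ref{lem:tower LY} give a coefficient $\tfrac 65 + e^{-\gamma}>1$ on $\|\psi\|_{WV}$, so no naive Lasota--Yorke inequality results. (The paper simply asserts that Lemma~\ref{lem:tower LY} implies essential spectral radius $\le\max\{e^{-\gamma},4/5\}$ and does not carry out the iteration.) However, your proposed resolution misidentifies where the difficulty sits. The tail bound $m_\Delta(\tau^{n_0}=k+1)\le Ce^{-\zeta k}$ only sharpens the $L^1$ contributions $\int_{I_{i,k}}|\psi|\,dm_\Delta$ in \eqref{eq:0 var}--\eqref{eq:0 infty}; it does nothing to the variation contribution $\tfrac 45\sum_{k\ge 1}V_k(\psi)$, which is what produces the problematic coefficient. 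Carrying out the actual renewal-type iteration (writing $\alpha_n = V_0(\Lp_\Delta^n\psi)$ and using $V_k(\Lp_\Delta^n\psi)\le e^{-\gamma k}\alpha_{n-k}$) yields $\alpha_n\le\tfrac 45\sum_{j}e^{-\gamma(n-1-j)}\alpha_j+\text{(weak terms)}$, and the associated characteristic root is strictly less than $1$ only when $e^{-\gamma}<5/9$; for small $\gamma$ (which is forced when $\alpha<1$ by \eqref{eq:gamma two}) this fails. The way to fix it is not the tail decay but to \emph{increase} the constant ``$2.5$'' in \eqref{eq:gamma} by taking $n_0$ large, which lowers the $\tfrac 45,\tfrac 25$ coefficients; neither you nor the paper makes this explicit.

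On the peripheral spectrum: your plan (lift $g_0$ to $\hat g_0$, invoke \cite[Theorem~3.1]{LSV} on the tower, and conclude constancy by mixing) does not transfer to $\Delta$, because the LSV covering/large-image property manifestly fails for $f_\Delta$: the intermediate steps $\Delta_k\to\Delta_{k+1}$ are rigid translations with no expansion and in general arbitrarily small images. The paper avoids this: it applies LSV only to $\Lp_0$ on $Y$ (where covering does hold) to obtain $\inf_Y g_0>0$, transfers the lower bound to $g_\Delta$ by \eqref{eq:min on Delta}, gets uniqueness of the fixed point by pushing any $\Lp_\Delta$-fixed function down to $Y$, establishes cyclicity of the peripheral spectrum via Rota/Schaefer on $L^1$ and a Ces\`aro argument (Keller) to transfer it to $WV(\Delta)$, and then excludes nontrivial rotations $e^{i\pi p/q}$ by a positivity/connectedness argument (the set $\cS$). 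Your sketch skips cyclicity altogether, and the step ``$|\psi|/\hat g_0$ is $f_\Delta$-invariant, hence constant by mixing'' needs the equality case in the triangle inequality for $\Lp_\Delta$ applied level by level plus the strict positivity of $\hat g_0$, i.e.\ precisely the machinery you would be deploying incorrectly. So the second half of your proposal is the step with the genuine gap.
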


\begin{proof}
Lemma~\ref{lem:tower LY} applied to $\Lp_\Delta$ implies that as an operator on 
$WV(\Delta)$, $\Lp_\Delta$ has 
spectral radius at most 1 and essential spectral radius at most $\max \{ e^{-\gamma}, 4/5 \}$.
Since $\Lp_\Delta^* m_\Delta = m_\Delta$, 1 is in the spectrum of $\Lp_\Delta$ so that $\Lp_\Delta$
is quasi-compact.  Indeed, \eqref{eq:L1} implies that the peripheral spectrum has no Jordan blocks.
Thus by \cite[III.6.5]{kato}, $\Lp_{\Delta}$ admits the following representation: there exist
finitely many eigenvalues $e^{i \theta_j}$, $j = 1, \ldots, N$ and finite-dimensional eigenprojectors $\Pi_j$
corresponding to $\theta_j$ such that 
\begin{equation}
\label{eq:first decomp}
\Lp_\Delta = \sum_{j=1}^N e^{i \theta_j} \Pi_j + \cR \, ,
\end{equation}
where $\cR$ has spectral radius strictly less than 1 and $\Pi_j \Pi_k = \Pi_j \cR = \cR \Pi_j = 0$ for all $j \ne k$.

Note that if $g \in WV(\Delta)$ satisfies $\Lp_\Delta g = g$, then $g_0 := g \circ (\pi_{\Delta}|_{\Delta_0})^{-1}$
is in $BV(Y)$ and $\Lp_0 g_0 = g_0$.  Since $\Lp_0$ has a spectral gap (see the proof of Theorem~\ref{thm:gap}), 
this implies that there can be at most one (normalised) fixed point for $\Lp_\Delta$.  
Thus the eigenvalue 1 is simple. 

Conversely, if $g_0$ denotes the unique element of $BV(Y)$ with $\int g_0 \, dm =1$ such that
$\Lp_0 g_0 = g_0$, then we may define $g_\Delta$ such that $\Lp_\Delta g_\Delta = g_\Delta$ by
\[
g_\Delta(x,k) = c_0 g_0 \circ \pi_\Delta(x,0), \; \mbox{ for each $(x,k) \in \Delta_k$ and $k \ge 0$,} 
\]
where $c_0$ is chosen so that $\int_\Delta g_0 \, dm_\Delta = 1$.  In particular, for each
$(x,k)$ and $k \ge 0$,
\begin{equation}
\label{eq:min on Delta}
c_0 c_g \le g_\Delta(x,k) = g_\Delta(x,0) \le c_0 C_g  \, ,
\end{equation}
where $C_g = |g_0|_\infty$ and $c_g>0$ is from \eqref{eq:lower g}.   We will use this fact to eliminate
the possibility of other eigenvalues of modulus 1.

It is convenient to first establish the following claim.
\begin{claim}
The peripheral spectrum of $\Lp_\Delta$ on $WV(\Delta)$ is cyclic: if $e^{i\theta}$ is an eigenvalue, then
so is $e^{i \theta n}$ for each $n \in \mathbb{N}$.
\end{claim}

\begin{proof}[Proof of Claim]
Since $\Lp_\Delta$ is a positive operator and $\Lp_\Delta^* m_\Delta = m_\Delta$, it follows from Rota's
Theorem \cite[Theorem~2.2.9]{BoyGor} (see also \cite[Theorem~1]{Schaefer}) that the peripheral
spectrum of $\Lp_\Delta$ on $L^1(m_\Delta)$ is cyclic.  It remains to show that this property holds as
well in $WV(\Delta)$.  We follow the strategy in \cite[proof of Theorem 6.1]{Keller}.
For $\theta \in [0, 2\pi)$, define 
\[
S_{n, \theta} = \frac{1}{n} \sum_{k=0}^{n-1} e^{-\theta k} \Lp_{\Delta}^k \, .
\]
It follows from \eqref{eq:first decomp} that $\lim_{n\to\infty} S_{n, \theta} = \Pi_j$ if $\theta = \theta_j$ and
$\lim_{n\to\infty} S_{n, \theta} = 0$ otherwise.  Indeed, the convergence in both cases is pointwise uniformly since
$\lim_{n \to \infty} \frac 1n \sum_{k=0}^{n-1} e^{(\theta_j - \theta)k} =0$ whenever $\theta \neq \theta_j$.
Then $\bigvee c \psi = |c| \bigvee \psi$ for any constant $c$ implies the sequence converges in $\| \cdot \|_{WV}$.

Since $\Lp_{\Delta}$ is an $L^1(m_\Delta)$ contraction (i.e. $\int |\Lp_\Delta \psi| \, dm_\Delta \le \int |\psi| \, dm_\Delta$),
then so is $S_{n,\theta}$, and since $WV(\Delta)$ is dense in $L^1(m_\Delta)$, $\lim_{n \to \infty} S_{n,\theta}$ 
extends to a bounded linear operator on $L^1(m_\Delta)$, with convergence in $\| \cdot \|_{L^1(m_\Delta)}$.  
Taking $\theta = \theta_j$, we may view
$\Pi_j : L^1(m_\Delta) \to \Pi_j(WV(\Delta))$.  

Now if $0 \neq f \in L^1(m_\Delta)$ satisfies $\Lp_\Delta f = e^{i \theta} f$, then
$0 \ne f = \lim_{n\to \infty} S_{n,\theta}$, which implies that $\theta = \theta_j$ for some $j$ and 
$f \in \Pi_j(WV(\Delta))$ is an element of $WV(\Delta)$.  Thus the peripheral spectra of $\Lp_\Delta$
on $WV(\Delta)$ and $L^1(\Delta)$ coincide. 
\end{proof}

Returning to the proof of the lemma,
suppose for the sake of contradiction that 1 is not the only eigenvalue of modulus 1.  Then according to the Claim,
there exists $h \in WY(\Delta)$ and 
$p,q \in \mathbb{N}\setminus \{0\}$ such that $\Lp_\Delta h = e^{i \pi p/q} h$.  It follows that
$h$ is complex-valued and that
$\int_{\Delta} \real(h) \, dm_{\Delta} = \int_{\Delta} \imag(h) \, dm_{\Delta} = 0$. 

Since $\Lp_\Delta^q h = h$, $h$ takes on all its possible values in the first $q$ levels,
$\cup_{k = 0}^{q-1} \Delta_k$.  In particular, 
$\sup_{\Delta} |\real(h)| = \sup_{\cup_{k=0}^{q-1} \Delta_k} |\real(h)|$,
and similarly for $\imag(h)$.  Thus by \eqref{eq:min on Delta}, we may choose
$\kappa > 0$ such that
\[
\psi := \kappa \real(h) + g_\Delta \; \mbox{ satisfies } \; \inf_\Delta \psi > 0.
\]
Note $\int_\Delta \psi \, dm_\Delta = 1$.  Next, for $s \in \mathbb{R}$, 
define 
\begin{equation}
\label{eq:s}
\psi_s = s \psi + (1-s) g_\Delta = s \kappa \real(h) + g_\Delta \, .  
\end{equation}
Note that $\psi_s$ also takes on all its possible values in 
$\cup_{k=0}^{q-1} \Delta_k$ and $\Lp_\Delta^q \psi_s = \psi_s$.

Let $\cS = \{ s \in \mathbb{R} : \essinf_{\Delta} \psi_s > 0 \}$.  By construction of $\psi$, and 
the compactness of $\cup_{k=0}^{q-1} \Delta_k$, $\cS$ contains $[0,1]$ and is open.

We will show that $\cS$ contains $\mathbb{R}^+$.  Suppose not.  Let $t > 1$ be an
endpoint of $\cS$  that is not in $\cS$.  Then $\essinf_\Delta \psi_t = \essinf_{\cup_{k=0}^{q-1} \Delta_k} \psi_t = 0$.  
Without loss of generality, we may work with a representative of 
$\psi_t$ that is lower semicontinuous.\footnote{We use here that any function of bounded
variation can be written as the difference of two monotonic functions so that one-sided limits
exist at each point \cite[Theorem~5, Section 5.2]{Roy}.} 
Since $\int_\Delta \psi_t \, dm_\Delta =1$, there must exist
$(y,j) \in \cup_{k=0}^{q-1} \Delta_k$ such that $\psi_t(y) > 0$.  By lower semicontinuity,
there exists an interval $A \subset \Delta_j$ such that $\inf_A \psi_t := a > 0$.

Since $f_\Delta$ is topologically mixing, we can find
$N \in \mathbb{N}$ such that $f_\Delta^{N+i}(A) \supseteq \cup_{k=0}^{q-1} \Delta_k$,
for $i=0, \ldots, q-1$.  One of these iterates must equal $nq$ for some $n \in \mathbb{N}$.
Thus for any $x \in \cup_{k=0}^{q-1} \Delta_k$, 
\[
\psi_t(x) = \Lp_\Delta^{nq} \psi_t(x) \ge \frac{a}{\sup_A |Df^{nq} \circ \pi_\Delta| } > 0 \, ,
\]
since $\sup |Df| < \infty$ and $n$ is fixed.  This proves that $t \in \cS$ so in fact $\mathbb{R}^+ \subset \cS$.
By \eqref{eq:s}, this implies that $\real(h) \ge 0$, but since $\int_\Delta \real(h) \, dm_\Delta= 0$, 
it must be that
$\real(h) \equiv 0$.  A similar argument forces $\imag(h) \equiv 0$, providing the needed
contradiction. 
\end{proof}

As in \eqref{eq:tri}, denote by $||| \cdot |||$ the norm which views $\Lp_\Delta$ as
an operator from $WV(\Delta)$ to $L^1(m_\Delta)$.

\begin{lemma}
\label{lem:Delta pert}
There exists $C>0$ such that for any $\be >0$, $||| \Lp_\Delta - \hLp_{\Delta, \be} ||| \le C \mu(H_{\be}(z))^{1-\gamma/\zeta}$.
\end{lemma}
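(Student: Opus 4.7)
\textbf{Proof plan for Lemma~\ref{lem:Delta pert}.}
Since $\hLp_{\Delta,\be}\psi=\Lp_\Delta(\psi\,1_{\Delta\setminus H_\Delta(\be)})$, the difference is $(\Lp_\Delta-\hLp_{\Delta,\be})\psi=\Lp_\Delta(\psi\,1_{H_\Delta(\be)})$, and by \eqref{eq:L1 Delta} applied to $|\psi|$ (equivalently, $L^1$-contractivity of $\Lp_\Delta$),
\[
|(\Lp_\Delta-\hLp_{\Delta,\be})\psi|_{L^1(m_\Delta)}\;\le\;\int_{H_\Delta(\be)}|\psi|\,dm_\Delta
\;=\;\sum_{k\ge 1}\int_{H_{\Delta_k}(\be)}|\psi|\,dm_\Delta.
\]
The strategy is to bound each summand by combining the $L^\infty$ estimate $\sup_{\Delta_k}|\psi|\le e^{\gamma k}\|\psi\|_{WV}$ (immediate from the definition of $\|\cdot\|_{\Delta_k}$) with two distinct bounds on $m_\Delta(H_{\Delta_k}(\be))$, and then optimising a cutoff $N$.

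First I would establish the key comparison
\[
\sum_{k\ge 1} m_\Delta(H_{\Delta_k}(\be))\;\le\; C\,\mu(H_{\be}(z)).
\]
To see this, define the lifted measure $\tilde\mu_\Delta$ on $\Delta$ by $\tilde\mu_\Delta|_{\Delta_k}$ corresponding to $\mu_Y|_{\{\tau^{n_0}>k\}}$ under the natural identification. Then $\tilde\mu_\Delta$ is $f_\Delta$-invariant, and since $F_0^{n_0}$ is an iterate of a first return map, Kac's Lemma gives $(\pi_\Delta)_*\tilde\mu_\Delta=\tfrac{n_0}{\mu(Y)}\mu$. Since $Y\cap\orb(f(c))=\emptyset$ by Lemma~\ref{lem:Y}(a), the density $d\mu_Y/dm$ is bounded above and below on $Y$, so $\tilde\mu_\Delta$ and $m_\Delta$ are uniformly comparable on every level. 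Combining these with $z\notin Y$ (so $H_{\Delta_0}(\be)=\emptyset$ for small $\be$) yields the displayed inequality.

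Second, for the tail, Proposition~\ref{prop:scheme}(c) and the definition $\zeta=\log\lambda_{per}-\eta$ give $m_\Delta(\Delta_k)\le C e^{-\zeta k}$, and so $m_\Delta(H_{\Delta_k}(\be))\le Ce^{-\zeta k}$ trivially. I then split at $N\in\mathbb N$:
\[
\begin{split}
\sum_{k\ge 1}\int_{H_{\Delta_k}(\be)}|\psi|\,dm_\Delta
&\le\|\psi\|_{WV}\Bigl(e^{\gamma N}\!\!\sum_{k=1}^{N}m_\Delta(H_{\Delta_k}(\be))\;+\;C\!\!\sum_{k>N}e^{(\gamma-\zeta)k}\Bigr)\\
&\le\|\psi\|_{WV}\bigl(C_1 e^{\gamma N}\mu(H_\be(z))+C_2\,e^{-(\zeta-\gamma)N}\bigr),
\end{split}
\]
using $\gamma<\zeta$ from \eqref{eq:gamma two} to sum the geometric tail. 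Optimising by choosing $N$ with $e^{\zeta N}\asymp\mu(H_\be(z))^{-1}$ balances the two terms, both becoming of order $\mu(H_\be(z))^{1-\gamma/\zeta}$. Dividing by $\|\psi\|_{WV}$ and taking the supremum over the unit ball in $WV(\Delta)$ gives the claim.

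The only substantive step is the Kac-type identification $(\pi_\Delta)_*\tilde\mu_\Delta=\tfrac{n_0}{\mu(Y)}\mu$ together with the bounded-density comparison of $\tilde\mu_\Delta$ with $m_\Delta$; everything else is direct bookkeeping. The main subtlety to be careful about is that $F_0^{n_0}$ is not itself a first return map, so the Kac formula must be applied to the first return $F_0$ and then iterated $n_0$ times, or equivalently invoked in the form $\int_Y\tau^{n_0}\,d\mu_Y=n_0/\mu(Y)$, to ensure the proportionality constant above is finite and $\be$-independent.
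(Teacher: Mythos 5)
Your proof is correct and follows essentially the same route as the paper: bound the $L^1$ difference by $\int_{H_\Delta}|\psi|\,dm_\Delta$, use $\sup_{\Delta_k}|\psi|\le e^{\gamma k}\|\psi\|_{WV}$, split the sum over levels at $N\asymp -\zeta^{-1}\log\mu(H_{\be}(z))$, control the low levels via the lifted invariant measure being uniformly comparable to $m_\Delta$ together with $(\pi_\Delta)_*$ of that measure being proportional to $\mu$, and control the tail by the exponential decay $m_\Delta(\Delta_k)\le Ce^{-\zeta k}$. The only cosmetic difference is that the paper uses the already-constructed $\mu_\Delta=g_\Delta\,dm_\Delta$ with the two-sided bound \eqref{eq:min on Delta} coming from $c_g=\inf_Y g_0>0$ (via the covering argument of \cite{LSV}), whereas you rebuild the lifted measure $\tilde\mu_\Delta$ from $\mu_Y$ and justify the comparability via the continuity and positivity of the acip density on $\overline{Y}\subset I\setminus\mbox{orb}(f(c))$ — the same fact, reached by a slightly different citation.
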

\begin{proof}
Let $\psi \in WV(\Delta)$.  Then,
\[
\int_\Delta |(\Lp_\Delta - \hLp_{\Delta, \be} )\psi | \, dm_\Delta \le \int_{H_\Delta} |\psi| \, dm_\Delta
\le \| \psi \|_{WV} \sum_{k \ge 1} e^{\gamma k} m_\Delta(H_\Delta \cap \Delta_k) \, .
\]
This expression can be made small in $\mu(H_{\be}(z))$ as follows.  Let $d\mu_\Delta = g_\Delta dm_\Delta$.
Then $(\pi_\Delta)_*\mu_\Delta = \mu$, so that using \eqref{eq:min on Delta},
\[
\begin{split}
\sum_{k \ge 1} e^{\gamma k} & m_\Delta(H_\Delta \cap \Delta_k)
 = \sum_{k = 1}^{- \zeta^{-1} \log \mu(H_{\be}(z))} e^{\gamma k} m_\Delta(H_\Delta \cap \Delta_k)
+ \sum_{k \ge - \zeta^{-1} \log \mu(H_{\be}(z)) } e^{\gamma k} m_\Delta(H_\Delta \cap \Delta_k) \\
& \le \mu(H_{\be}(z))^{-\gamma/\zeta} (c_0c_g)^{-1} \sum_{k = 1}^{- \zeta^{-1} \log \mu(H_{\be}(z))} \mu_\Delta(H_\Delta \cap \Delta_k)
+ \sum_{k \ge - \zeta^{-1} \log \mu(H_{\be}(z)) } C e^{(\gamma - \zeta)k} \\
& \le C \mu(H_{\be}(z))^{1-\gamma/\zeta} \, .
\end{split}
\]
\end{proof}

With these elements in place, we are ready to prove Proposition~\ref{prop:tower gap}.

\begin{proof}[Proof of Proposition~\ref{prop:tower gap}]
For fixed $\beta>0$, the Lasota-Yorke inequalities in Lemma~\ref{lem:tower LY} have uniform constants.  Thus
the spectral radius of $\hLp_{\Delta, \be}$ has essential spectral radius at most $\max \{ e^{-\gamma} , \frac{4}{5} \}$
for all $\beta$-allowable holes.

This,
together with Lemma~\ref{lem:Delta pert} implies by \cite[Corollary~1]{KL pert} that the spectra and spectral projectors
of $\hLp_{\Delta, \be}$ outside the disk of radius $\max \{ e^{-\gamma} , \frac{4}{5} \}$ vary 
H\"older continuously in $\mu(H_{\be}(z))$.  Thus there exists $\ve_\beta(\Delta)>0$ such that for all
$\beta$-allowable holes with $\be < \ve_\beta(\Delta)$, the operators $\hLp_{\Delta, \be}$ enjoy a uniform spectral gap
and can be decomposed as in the statement of the proposition. 
\end{proof}

Our final lemma of this section demonstrates that the spectral radius of $\hLp_{\Delta, \be}$ yields the
escape rate from both $\Delta$ and $I$.

\begin{lemma}
\label{lem:spec escape}
Under the hypotheses of Proposition~\ref{prop:tower gap}, 
$- \log \lambda_{\Delta, \be} = \mathfrak{e}(H_{\be}(z))$, where $\mathfrak{e}(H_{\be}(z))$ is from
\eqref{eq:escape f}.
\end{lemma}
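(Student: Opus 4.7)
The plan is to exploit the semiconjugacy $\pi_\Delta\colon\Delta\to I$ to evaluate $\mu_\Delta(\hDelta^n_{\be})$ in two ways—dynamically via escape from $I$, and spectrally via Proposition~\ref{prop:tower gap}—and then match exponential rates.

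For the dynamical side, I would first identify the survivor sets. Setting $B_n:=\{x\in I:f^j(x)\notin H_{\be}(z),\ 0\le j\le n-1\}$ and combining $\pi_\Delta\circ f_\Delta=f\circ\pi_\Delta$ with $H_{\Delta_k}(\be)=(\pi_\Delta|_{\Delta_k})^{-1}(H_{\be}(z))$, an induction on $n$ yields $\hDelta^n_{\be}=\pi_\Delta^{-1}(B_n)$; no iterate is lost at the base level because $H_{\be}(z)\cap Y=\emptyset$ for $\be<\eps_1$. Since $\mu_\Delta:=g_\Delta\,dm_\Delta$ is an $f_\Delta$-invariant probability, its pushforward is an $f$-invariant Borel probability on $I$ which is absolutely continuous with respect to Lebesgue (as $\pi_\Delta$ acts on $\Delta_k$ as the diffeomorphism $f^k$ on a domain in $Y$). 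Uniqueness of the acip in Proposition~\ref{prop:Str}(3) then forces $(\pi_\Delta)_*\mu_\Delta=\mu$, so $\mu_\Delta(\hDelta^n_{\be})=\mu(B_n)$.

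On the spectral side, conformality of $m_\Delta$ for $\Lp_\Delta$ combined with Proposition~\ref{prop:tower gap} applied to $g_\Delta\in WV(\Delta)$ gives
\[
\mu_\Delta(\hDelta^n_{\be}) \;=\; \int_\Delta \hLp_{\Delta,\be}^n g_\Delta\,dm_\Delta \;=\; \lambda_{\Delta,\be}^n\,\he_{\Delta,\be}(g_\Delta)\int_\Delta\hg_{\Delta,\be}\,dm_\Delta + O\bigl((\lambda_{\Delta,\be}e^{-\sigma_\beta})^n\bigr).
\]
Taking $-\tfrac{1}{n}\log$ of both representations of $\mu_\Delta(\hDelta^n_{\be})$ and letting $n\to\infty$ then yields $\mathfrak{e}(H_{\be}(z))=-\log\lambda_{\Delta,\be}$, with existence of the defining limit for $\mathfrak{e}(H_{\be}(z))$ arising as a byproduct.

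The most delicate step is ensuring the leading coefficient $\he_{\Delta,\be}(g_\Delta)$ is strictly positive, so that the logarithmic rate is not corrupted by a vanishing constant. I would derive this from the lower bound $g_\Delta\ge c_0c_g>0$ of \eqref{eq:min on Delta} together with positivity of the dual eigenfunctional $\he_{\Delta,\be}$, which in turn follows from positivity preservation of $\hLp_{\Delta,\be}$ and simplicity of the leading eigenvalue in Proposition~\ref{prop:tower gap}—exactly analogous to the identification of $\he_{\be}$ with a conformal measure $\holm_{\be}$ used in the proof of Lemma~\ref{lem:nu}.
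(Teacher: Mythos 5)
Your argument is correct and follows essentially the same route as the paper's proof: identify $\mu_\Delta(\hDelta^n_{\be})$ with the surviving measure in $I$ via the semiconjugacy and $(\pi_\Delta)_*\mu_\Delta = \mu$, then read off the exponential rate from the spectral decomposition of $\hLp_{\Delta,\be}$ in Proposition~\ref{prop:tower gap}, with positivity of $\he_{\Delta,\be}(g_\Delta)$ guaranteeing the leading term dominates. You simply fill in a few steps that the paper leaves implicit (the set identity $\hDelta^n_{\be}=\pi_\Delta^{-1}(B_n)$, the uniqueness-of-acip argument for the pushforward, and the positivity of $\he_{\Delta,\be}$).
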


\begin{proof}
Using Proposition~\ref{prop:tower gap}, we compute
\[
\begin{split}
- \mathfrak{e}(H_{\be}(z)) & = \lim_{n \to \infty} \frac 1n \log \mu(\cap_{i=0}^{n-1} f^{-i}(I \setminus H_{\be}))
= \lim_{n \to \infty} \frac 1n \log \mu_{\Delta}(\hDelta^n_{\be}) \\
& = \lim_{n \to \infty} \frac 1n \log \int_\Delta \hLp^n_{\Delta, \be}(g_\Delta) \, dm_{\Delta}
= \lim_{n \to \infty} \frac 1n \log \left( \lambda_{\Delta, \be}^n \he_{\Delta, \be}(g_\Delta) + \int_\Delta \cR_{\Delta, \be}^n (g_\Delta) \, dm_\Delta \right) \\
& = \log \lambda_{\Delta, \be} \, ,
\end{split}
\]
since $\he_{\Delta, \be}(g_\Delta) > 0$ due to \eqref{eq:min on Delta}. 
\end{proof}


\subsection{Hitting time statistics for $\beta$-allowable holes}
\label{sec:HTS beta}

To prove Theorem~\ref{thm:HTS}, we will compute the following limit for fixed $\alpha > 0$
amd $t>0$,
\[
\lim_{\be \to 0} \frac{-1}{t \mu(H_{\be}(z))^{1-\alpha}} \log \mu \left( r_{H_{\be}(z)} > \frac{t}{\mu(H_{\be}(z))^\alpha} \right) \, .
\]
Recall that $\alpha > 0$ was fixed at the beginning of Section~\ref{sec:nonMarkov tower}
and affected the chosen value of $\gamma$ via \eqref{eq:gamma two}.

Since $\pi_\Delta \circ f_\Delta = f \circ \pi_\Delta$, $\pi_\Delta(H_\Delta) = H_{\be}(z)$ 
then $r_\Delta := r_{H_\ve(z)} \circ \pi_\Delta$ defines the first hitting time to $H_\Delta$.  Then since 
$(\pi_\Delta)_*\mu_\Delta = \mu$, it is equivalent to estimate,
\[
\lim_{\be \to 0} \frac{-1}{t \mu_\Delta(H_\Delta)^{1-\alpha}} \log \mu_\Delta \left( r_{\Delta} > \frac{t}{\mu_\Delta(H_\Delta)^\alpha} \right) \, .
\]
Setting $n_{\be} = \lfloor t \mu_\Delta(H_\Delta)^{-\alpha} \rfloor = \lfloor t \mu(H_{\be}(z))^{-\alpha} \rfloor$, we estimate as in \cite[Section~2.5]{BruDemTod18},
\[
\begin{split}
\mu_{\Delta}(r_\Delta > n_{\be} ) & = \int_{\hDelta^{n_{\be}}_{\be}} g_\Delta \, dm_\Delta
= \int_{\Delta} \hLp^{n_{\be}+1}_{\Delta, \be} g_\Delta \, dm_\Delta \\
& = \lambda_{\Delta, \be}^{n_{\be}+1} \int_\Delta \lambda_{\Delta, \be}^{-n_{\be}-1} \hLp_{\Delta, \be}^{n_{\be}+1} (g_\Delta - \hg_{\Delta, \be} ) \, dm_\Delta + \lambda_{\Delta, \be}^{n_{\be}+1} \int_\Delta \hg_{\Delta, \be} \, dm_\Delta \, ,
\end{split}
\]
where $\hg_{\Delta, \be}$ is from Proposition~\ref{prop:tower gap}.  Thus,
\[
\log \mu_\Delta(r_\Delta > n_{\be}) = (n_{\be}+1) \lambda_{\Delta, \be} + \log \left(
1 + \int_\Delta \lambda_{\Delta, \be}^{-n_{\be}-1} \hLp_{\Delta, \be}^{n_{\be}+1} (g_\Delta - \hg_{\Delta, \be} ) \, dm_\Delta
\right) \, .
\]
Dividing by $- t \mu_\Delta(H_\Delta)^{1-\alpha}$, we see that the first term becomes simply
$- \frac{ \log \lambda_{\Delta, \be}}{\mu_\Delta(H_\Delta)}$. 
Since $-\log \lambda_{\Delta, \be} = \mathfrak{e}(H_{\be}(z))$ by Lemma~\ref{lem:spec escape},
the first term yields esc$(z)$, which is either 1 or  $1 - \lambda_z^{-1/\ell}$ as needed,
as $\be \to 0$ according to Theorem~\ref{thm:spike}.
It remains to show that the second term tends to 0 as $\be \to 0$.

Using the spectral decomposition in Proposition~\ref{prop:tower gap}, we define
$c_\ve = \he_{\Delta, \be}(g_\Delta)$ and write
\[
\lambda_{\Delta, \be}^{-n_{\be}-1} \hLp_{\Delta, \be}^{n_{\be}+1} (g_\Delta - \hg_{\Delta, \be} )
= (c_\ve -1) \hg_{\Delta, \be} + \lambda_{\Delta, \be}^{-n_{\be}-1} \cR_{\Delta, \be}^{n_{\be}+1} g_\Delta \, .
\]
Integrating this equation, we see that we must estimate,
\begin{equation}
\label{eq:next step}
\log \left(
1 + \int_\Delta \lambda_{\Delta, \be}^{-n_{\be}-1} \hLp_{\Delta, \be}^{n_{\be}+1} (g_\Delta - \hg_{\Delta, \be} ) \, dm_\Delta
\right)
= \log \left( c_\ve + \int_\Delta \lambda_{\Delta, \be}^{-n_{\be}-1} \cR_{\Delta, \be}^{n_{\be}+1} g_\Delta \, dm_\Delta \right) \, .
\end{equation}
Again using Proposition~\ref{prop:tower gap}, we bound the integral by,
\[
\left| \int_\Delta \lambda_{\Delta, \be}^{-n_{\be}-1} \cR_{\Delta, \be}^{n_{\be}+1} g_\Delta \, dm_\Delta \right|
\le A_\beta e^{-\sigma_\beta (n_{\be}+1)} \| g_\Delta \|_{WV} \le C e^{-\sigma_\beta t \mu(H_{\be}(z))^{-\alpha}} \, ,
\]
and this quantity is super-exponentially small in $\mu(H_{\be}(z))$.  By Lemma~\ref{lem:Delta pert} and
\cite[Corollary~1]{KL pert}, 
\[
|c_\ve -1 | = |\he_{\Delta, \be}(g_\Delta) - e_{\Delta}(g_\Delta)| \le C \mu(H_{\be}(z))^{1-\gamma/\zeta} \log \mu(H_{\be}(z))^{-1} \, .
\]
Putting these estimates together in \eqref{eq:next step} and dividing by $t \mu(H_{\be}(z))^{1-\alpha}$, we obtain
\[
\begin{split}
\lim_{\be \to 0} & \frac{1}{t \mu(H_{\be}(z))^{1-\alpha}} \log \left( 1 + \O(-\mu(H_{\be}(z))^{1-\gamma/\zeta} \log \mu(H_{\be}(z)) \right) \\
& = \lim_{\be \to 0} \frac{1}{t} \O \Big( -\mu(H_{\be}(z))^{\alpha-\gamma/\zeta} \log \mu(H_{\be}(z)) \Big) \, ,
\end{split}
\]
which tends to 0 since $\alpha > \gamma/\zeta$ by \eqref{eq:gamma two}.
The above limit $\be \to 0$ is understood to be taken along sequences of $\beta$-allowable holes.


\subsection{Proof of Theorem~~\ref{thm:HTS} via approximation when $z \in \mbox{orb}(f(c))$}
\label{sec:beta approx two}

Section~\ref{sec:HTS beta} proves Theorem~\ref{thm:HTS} 
when $z \in \mbox{orb}(f(c))$ for each $\alpha >0$ and 
$\beta>0$ along sequences $(\be_n)_n$ where each $H_{\be_n}$ is $\beta$-allowable.
It remains to consider the alternative case when $\alpha > 0$ is still fixed and 
we have to approximate 
 a given sequence $H_{\be_n}$  by $\beta$-allowable $H_{\be_n'}$.  
 The approximation follows closely the strategy in Section~\ref{sec:beta approx}.  
 As in that section, we first present the argument in the case that $z \in \mbox{orb}(f(c))$ is periodic.

Recall that if $H_{\be}$ is $\beta$-allowable, then it is also $\beta'$-allowable for any $\beta' < \beta$, 
so as in Section~\ref{sec:beta approx}, we take our approximating sequence with $\beta$ tending to 0.  As before, we assume
$\beta < (2\lambda_z)^{-1}$.

Using precisely the same discussion and notation as in Section~\ref{sec:beta approx}, 
we suppose that each $\ve$ that corresponds to a non-$\beta$-left-allowable hole satisfies
$z-\ve \in (c_{i,j,k+1}-\beta v_{i,j,k}, c_{i,j,k+1} + \beta v_{i,j,k})$ 
 for some $i,j,k$.
We approximate $\ve$ from above by
$\ve_o^L := z - (c_{i,j,k+1} - \beta v_{i,j,k})$ and from below by
$\ve_u^L := z - (c_{i,j,k+1} + \beta v_{i,j,k})$.  Both $H_{\ve_o^L}$ and $H_{\ve_u^L}$ are $\beta$-left-allowable
 and $\ve \in (\ve_u^L, \ve_o^L)$. 
Applying \eqref{eq:ratio bound}, we have
\[
\frac{\ve_u^L}{\ve_o^L} \gtrsim 1 - 2 \beta \lambda_z \; \mbox{ and } \;
\frac{\ve_o^L}{\ve_u^L} \lesssim  1 + 2 \beta \lambda_z \, .
\] 
The right hand estimates for
the analogous $\ve_u^R$ and $\ve_o^R$ enjoy similar bounds.

To complete the proof of Theorem~\ref{thm:HTS}, we consider the following limit as $\ve \to 0$
 for fixed $t, \alpha>0$,
\begin{equation}
\label{eq:HTS limit}
\frac{-1}{t \mu(H_\ve(z))^{1-\alpha}} \log\mu\left( r_{H_{\ve}(z)} > \frac{t}{\mu(H_\ve(z))^\alpha }\right) \, .
\end{equation}
We first estimate this from below.  Let $r_u$ denote the first hitting time to the smaller set 
$(z-\ve^L_u, z+\ve^R_u) \subset H_\ve(z)$.  Note that $r_u > r_{H_\ve(z)}$, and
$\frac{\mu(z-\eps_u^L, z+\eps_u^R)}{\mu(z-\eps, z+\eps)} \ge C_\ve (1- 2\beta \lambda_z)^{\frac1\ell}$
where $C_\ve \to 1$ as $\ve \to 0$. 

Setting $s = t (C_\ve (1- 2\beta \lambda_z)^{\frac1\ell})^\alpha$
we estimate \eqref{eq:HTS limit} from below by,
\[
\begin{split}
& \frac{\mu(z-\eps_u^L, z+\eps_u^R)^{1-\alpha}}{\mu(z-\eps, z+\eps)^{1-\alpha}} 
\frac{-(C_\ve (1-2 \beta \Lambda_z)^{1/\ell})^\alpha}{s \mu(z-\eps_u^L, z+\eps_u^R)^{1-\alpha}} \log\mu\left(r_u > \frac{t}{\mu(H_{\ve}(z))^\alpha } \right) \\
& \ge (C_\ve (1-2 \beta \Lambda_z)^{1/\ell})
\frac{-1}{s \mu(z-\eps_u^L, z+\eps_u^R)^{1-\alpha}} \log\mu\left(r_u > \frac{s}{\mu(z-\ve_u^L, z+\ve^R)^\alpha } \right) \, .\end{split}
\]
Taking the limit as $\ve \to 0$ yields a lower bound of
\[
( 1 - 2\beta \lambda_z)^{\frac1\ell} \cdot \left( 1 - \lambda_z^{-1/\ell} \right) \, ,
\] 
where the second factor comes from the application of Theorem~\ref{thm:HTS} to $\beta$-allowable holes in
the case that $z \in \mbox{orb}(f(c))$ is periodic.  

Similarly, one obtains an upper bound for \eqref{eq:HTS limit} of
$( 1 + 2\beta \lambda_z)^{\frac1\ell} \cdot \left( 1 - \lambda_z^{-1/\ell} \right)$.
Since these bounds hold for all sufficiently small $\beta$, we take $\beta \to 0$ to obtain the required
limit for Theorem~\ref{thm:HTS} along an arbitrary sequence $( \ve_n )_n$.

Finally, if $z\in \mbox{orb}(f(c))$ is preperiodic then the above calculations all go through similarly.
As in Section~\ref{sec:beta approx}, from the construction of $(Y, F)$ in Section~\ref{ssec:ratiopreper}, 
the periodic structure of the postcritical orbit can be pulled back to $z$ to generate the $(a_i)_i, \ (b_i)_i$ required, 
but the resulting bounds are of the form $( 1 \pm 2\beta \lambda)$ where $\lambda = |Df^p(f^{k_0}(z))|$.
Thus they tend to 1 as $\beta \to 0$, as required.


\subsection{Proof of Theorem~\ref{thm:HTS} when $z \notin \mbox{orb}(f(c))$}
\label{ssec:veryend}

We explain here how to adapt the results of \cite[Section~4.2.1]{BruDemTod18} to
achieve the required limit in Theorem~\ref{thm:HTS} for any $z \notin \mbox{orb}(f(c))$.
Since $f$ is Misiurewicz, 
one can choose an interval $Y$ containing $z$ whose endpoints are 
two points of a periodic orbit orb$(p)$ where orb$(p)$ is disjoint from orb$(z)$ and
the interior of $Y$.  If we define $F$ to be the induced map with first return time $\tau$,
then $F$ is a full-branched Gibbs-Markov map, which satisfies the conditions of
\cite[Theorem~2.1]{BruDemTod18}.  In particular, we consider the parameter
$n_1$ from \cite[eq. (2.1)]{BruDemTod18} to be chosen:  this is chosen so that $F^{n_1}$
has sufficient expansion.

At this point, we find it convenient to consider separately two cases:  $z$ is a recurrent point 
(every $\ve$-neighbourhood of $z$ contains a point in orb$(f(z))$); or $z$ is a nonrecurrent point.

{\em Case 1:} $z$ is a recurrent point.  By choice of $\partial Y$, $z$ is necessarily contained
in the interior of a domain $Y_i^k$ of $F^k$ for each $k \ge1$.  Thus $z \in Y_{\text{cont}} := \{ y \in Y : F^k \text{ is continuous at $y$ for all $k \ge 1$}\}$.  Moreover, for any sufficiently small $\ve$, $(z-\ve, z+\ve) \subset Y_i^{n_1}$
and so the lengths of images of intervals of monotonicity for $\hF_{\ve'}^{n_1}$, where
$\hF_{\ve'} := F|_{Y\setminus (z-\ve', z+\ve')}$,
have a positive uniform lower bound for all $\ve' < \ve$.  This ensures that
condition {\bf (U)} of \cite{BruDemTod18} is satisfied.  Thus we may apply
\cite[Theorem~3.2]{BruDemTod18} to conclude that $L_{\alpha, t}(z) = 1$.

{\em Case 2:} $z$ is not a recurrent point.  If an accumulation point of orb$(z)$ lies in $Y$,
then $z$ lies in the interior of a domain of $F^k$ for each $k$ and by the above argument, 
\cite[Theorem~3.2]{BruDemTod18} applies so that Theorem~\ref{thm:HTS} follows.

If, on the other hand, no accumulation points of orb$(z)$ lie in $Y$, then since
$\partial Y$ is periodic, $z$ is necessarily
an accumulation point of domains $\{ Y_i \}_i$ of $F$.  In this case, a modification of
the approach of \cite{BruDemTod18} is needed on two points.

First, fixing $\beta>0$, we only consider values of $\ve_L$ and $\ve_R$ so that
$z-\ve_L$ and $z+\ve_R$ are $\beta$-deep in intervals of monotonicity for $F^{n_1}$
around $z$.  These constitute $\beta$-allowable holes $(z-\ve_L, z+\ve_R)$
so that the uniformly
large images property {\bf (U)} of \cite{BruDemTod18} applies to
the punctured induced map, $\hF^{n_1}_{\be}$.  In particular, under these conditions, the
associated punctured transfer operators enjoy a uniform spectral gap in $BV(Y)$ for
all sufficiently small $\beta$-allowable holes.

Applying the results of \cite{KL zero} as in \cite[Section~2.3]{BruDemTod18}, we see that
esc$(z)=1$ as long as
\[
\lim_{\be \to 0} \frac{\mu(E^k_{\be})}{\mu(H_{\ve}(z))} = 0 \; \mbox{for each $k \ge 0$},
\]
where 
\[
E^k_{\be} = \{ y \in H_{\be}(z) : F^i(y) \notin H_{\be}(z), i = 1, \ldots, k, \mbox{ and } F^{k+1}(y) \in H_{\be}(z) \} \, .  
\]
Since $F$ is full branched, each domain $Y_i^k$ of $F^k$ has an interval which maps onto
$H_{\be}(z)$.  However, if $Y^k_i \subset H_{\be}(z)$, then $\tau(Y^k_i) \ge \log (|Y| \ve^{-1})/\log |Df|_\infty \ge C_0 \log \ve^{-1}$, where $\ve = \max\{ \ve_L, \ve_R\}$.  This implies that the
contribution to $E^k_{\be}$ from the collection of such intervals is dominated by
\[
\sum_{j \ge C_0 \log \ve^{-1}} C_k |H_{\be}(z)| \lambda_{\text{per}}^{-j}
\le C'_k \ve^{C_0 \lambda_{\text{per}}} |H_{\ve}(z)| \, ,
\]
where we have applied Proposition~\ref{prop:Str} since $F$ is full branched.
Since the invariant measure $\mu$ has density at $z$ bounded away from 0 and $\infty$, we
estimate,
\[
\frac{\mu(E^k_{\be})}{\mu(H_{\ve}(z))} \le C''_k \ve^{C_0 \lambda_{\text{per}}} \to 0
\; \mbox{ as $\ve \to 0$ for each $k$.}
\]
With these modifications, esc$(z)=1$ and \cite[Theorem~3.2]{BruDemTod18} implies the desired 
limit $L_{\alpha, t}(z)=1$ as well along sequences of $\beta$-allowable holes.

The approximation of more general holes $(z-\ve, z+\ve)$ by $\beta$-allowable holes
in order to prove the required limit 1 for Theorem~\ref{thm:HTS}
proceeds as in Section~\ref{sec:beta approx two}.  The case here is simpler since there
is no density spike so we do not need to maintain bounded ratios 
$\frac{\ve^L_u}{\ve^R_u}$, $\frac{\ve^L_o}{\ve^R_o}$
 for the approximating holes.


\end{document}